\definecolor{mycolor1}{rgb}{0.105882,0.619608,0.466667}
\definecolor{mycolor2}{rgb}{0.85098,0.372549,0.00784314}
\definecolor{mycolor3}{rgb}{0.458824,0.439216,0.701961}
\definecolor{mycolor4}{rgb}{0.905882,0.160784,0.541176}
\definecolor{mycolor5}{rgb}{0.4,0.65098,0.117647}
\definecolor{mycolor6}{rgb}{0.65098,0.462745,0.113725}
\definecolor{mycolor7}{rgb}{0.901961,0.670588,0.00784314}
\definecolor{mycolor8}{rgb}{0.4,0.4,0.4}
\definecolor{mycolor9}{rgb}{0.301961,0,0.294118}
\definecolor{mycolor10}{rgb}{0.0313725,0.25098,0.505882}
\newif\ifmygrid@coordinates
\tikzset{/mygrid/step line/.style={line width=0.80pt,draw=gray!80},
         /mygrid/steplet line/.style={line width=0.25pt,draw=gray!80}}
\def\mygrid@def@coordinates(#1,#2)(#3,#4){%
    \def\mygrid@xlo{#1}%
    \def\mygrid@xhi{#3}%
    \def\mygrid@ylo{#2}%
    \def\mygrid@yhi{#4}%
}
\newcommand\DrawGrid[3][]{%
    \pgfkeys{/mygrid/.cd,coordinates=true,step=1,steplet=0.2,#1}%
    \draw[/mygrid/steplet line] #2 grid[step=\mygrid@steplet] #3;
    \draw[/mygrid/step line] #2 grid[step=\mygrid@step] #3;
    \mygrid@def@coordinates#2#3%
    \ifmygrid@coordinates%
        \draw[/mygrid/step line]
        \foreach \xpos in {\mygrid@xlo,...,\mygrid@xhi} {%
          (\xpos,\mygrid@ylo) -- ++(0,-3pt)
                              node[anchor=north] {$\xpos$}
        }
        \foreach \ypos in {\mygrid@ylo,...,\mygrid@yhi} {%
          (\mygrid@xlo,\ypos) -- ++(-3pt,0)
                              node[anchor=east] {$\ypos$}
        };
    \fi%
}
\newcommand{\remove}[1]{}
\newcommand{\removesafe}[1]{}
\newcommand{\argmin}[1]{\underset{#1}{\operatorname{argmin}}}
\newcommand{\transpose}{^\top\! }
\newcommand{\inner}[2]{\left\langle{#1},{#2}\right\rangle}
\newcommand{\innersmall}[2]{\langle{#1},{#2}\rangle}
\newcommand{\trace}{\mathrm{Tr}}
\newcommand{\Trace}{\mathrm{Tr}}
\newcommand{\Proj}{\mathrm{Proj}}
\newcommand{\Exp}{\mathrm{Exp}}
\newcommand{\Retr}{\mathrm{R}}
\newcommand{\inj}{\mathrm{inj}}
\newcommand{\Gr}{\mathrm{Gr}}
\newcommand{\St}{\mathrm{St}}
\newcommand{\T}{\mathrm{T}}
\newcommand{\Rnn}{{\mathbb{R}^{n\times n}}}
\newcommand{\Rnp}{{\mathbb{R}^{n\times p}}}
\newcommand{\Rmn}{{\mathbb{R}^{m\times n}}}
\newcommand{\Rd}{{\mathbb{R}^{d}}}
\newcommand{\Rk}{{\mathbb{R}^{k}}}
\newcommand{\reals}{{\mathbb{R}}}
\newcommand{\Rn}{{\mathbb{R}^n}}
\newcommand{\grad}{\mathrm{grad}}
\newcommand{\Hess}{\mathrm{Hess}}
\newcommand{\diag}{\mathrm{diag}}
\newcommand{\D}{\mathrm{D}}
\newcommand{\dt}{\mathrm{d}t}
\newcommand{\Ddt}{\frac{\D}{\dt}}
\newcommand{\ddt}{\frac{\mathrm{d}}{\mathrm{d}t}}
\newcommand{\ddq}{\frac{\mathrm{d}}{\mathrm{d}q}}
\newcommand{\dtau}{\mathrm{d}\tau}
\newcommand{\calK}{\mathcal{K}}
\newcommand{\calU}{\mathcal{U}}
\newcommand{\calV}{\mathcal{V}}
\newcommand{\calM}{\mathcal{M}}
\newcommand{\calN}{\mathcal{N}}
\newcommand{\calS}{\mathcal{S}}
\newcommand{\calX}{\mathcal{X}}
\newcommand{\calW}{\mathcal{W}}
\newcommand{\Id}{\operatorname{Id}} 
\newcommand{\longg}{{\mathrm{long}}}
\newcommand{\short}{{\mathrm{short}}}
\newcommand{\opnorm}[1]{\left\|{#1}\right\|_\mathrm{op}}
\newcommand{\TODO}[1]{{\color{red}{[#1]}}}
\newcommand{\TODOF}[1]{}
\newcommand{\lambdamin}{\lambda_\mathrm{min}}
\newcommand{\dist}{\mathrm{dist}}
\newcommand{\flow}{f_{\mathrm{low}}}
\newcommand{\defeq}{\triangleq}
\newtheorem{assumption}{A\ignorespaces} 
\newcommand{\sigmamin}{\sigma_{\operatorname{min}}}
\newcommand{\sigmamax}{\sigma_{\operatorname{max}}}
\newcommand{\aref}[1]{\hyperref[#1]{A\ref{#1}}}
\newcommand{\tauort}{t}
\begin{document}
	
	\title{Adaptive regularization with cubics on manifolds\thanks{Authors are listed alphabetically. NB was partially supported by NSF award DMS-1719558. CC acknowledges support from The Alan Turing Institute for Data Science, London, UK. NA and BB were supported by Elad Hazan's NSF grant IIS-1523815.}}
	
	
	\author{Naman Agarwal \and Nicolas Boumal \and Brian Bullins \and Coralia Cartis}
	
	\authorrunning{Agarwal, Boumal, Bullins and Cartis} 
	
	\institute{%
		N. Agarwal \at
		Google Research,
		Princeton, NJ \\
		\email{namanagarwal@google.com}
		\and
		N. Boumal \at
		Department of Mathematics,
		Princeton University, NJ \\
		\email{nboumal@math.princeton.edu}
		\and
		B. Bullins \at
		Toyota Technological Institute at Chicago, IL \\
		\email{bbullins@ttic.edu}
		\and
		C. Cartis \at
		Mathematical Institute,
		University of Oxford, UK \\
		\email{coralia.cartis@maths.ox.ac.uk}
	}
	
	\date{Accepted by Mathematical Programming in 2020:\\\url{https://link.springer.com/article/10.1007/s10107-020-01505-1}}
	


\maketitle

\begin{abstract}
Adaptive regularization with cubics (ARC) is an algorithm for unconstrained, non-convex optimization. Akin to the trust-region method, its iterations can be thought of as approximate, safe-guarded Newton steps. For cost functions with Lipschitz continuous Hessian, ARC has optimal iteration complexity, in the sense that it produces an iterate with gradient smaller than $\varepsilon$ in $O(1/\varepsilon^{1.5})$ iterations. For the same price, it can also guarantee a Hessian with smallest eigenvalue larger than $-\sqrt{\varepsilon}$. In this paper, we study a generalization of ARC to optimization on Riemannian manifolds. In particular, we generalize the iteration complexity results to this richer framework. Our central contribution lies in the identification of appropriate manifold-specific assumptions that allow us to secure these complexity guarantees both when using the exponential map and when using a general retraction. A substantial part of the paper is devoted to studying these assumptions---relevant beyond ARC---and providing user-friendly sufficient conditions for them.
Numerical experiments are encouraging. 
\keywords{Optimization on manifolds \and Complexity \and Lipschitz regularity \and Cubic regularization \and Newton's method}
\end{abstract}


\section{Introduction} \label{sec:intro}


Adaptive regularization with cubics (ARC) is an iterative algorithm used to solve unconstrained optimization problems of the form
\begin{align*}
	\min_{x \in \Rn} \ f(x),
\end{align*}
where $f \colon \Rn \to \reals$ is twice continuously differentiable~\citep{griewank1981modification}. Given any initial iterate $x_0 \in \Rn$, assuming $f$ is lower-bounded and has a Lipschitz continuous Hessian, ARC produces an iterate $x_k$ with small gradient, namely, $\|\nabla f(x_k)\| \leq \varepsilon$, in at most $O(1/\varepsilon^{1.5})$ iterations~\citep{nesterov2006cubic,cartis2011adaptivecubic,bcgmt}. This improves upon the worst-case iteration complexity of steepest descent and classical trust-region methods.
In fact, this iteration complexity is optimal under those assumptions~\citep{carmon2017lower},
contributing to renewed interest in this method.

In this paper, we study a generalization of ARC to optimization on manifolds, that is, 
\begin{align}
	\min_{x \in \calM} \ f(x),
	\label{eq:P}
	\tag{P}
\end{align}
where $\calM$ is a given Riemannian manifold
and $f \colon \calM \to \reals$ is a (sufficiently smooth) cost function.
The practical interest in optimization on manifolds stems from its ubiquity: it comes up naturally in numerical linear algebra (spectral decompositions, low-rank Lyapunov equations), signal and image processing (shape analysis, diffusion tensor imaging, community detection on graphs, rotational video stabilization), statistics and machine learning (matrix/tensor completion, metric learning, Gaussian mixtures, activity recognition, independent component analysis), robotics and computer vision (simultaneous localization and mapping, structure from motion, pose estimation) and various other fields.
The theoretical interest comes from the fact that Riemannian geometry is arguably the ``right'' setting for \emph{unconstrained} optimization---indeed, it is the minimal mathematical structure required to have comfortable notions of gradients and Hessians, which are the basic building blocks of smooth, unconstrained optimization algorithms.
See for example~\citep{AMS08} and~\citep{boumal2020intromanifolds} for book-length introductions to this topic.
See \emph{related work} below for further references.

Building upon the existing literature for the Euclidean case, we generalize the worst-case iteration complexity analysis of ARC to manifolds, obtaining essentially the same guarantees but with a wider application range: see numerical experiments in Section~\ref{sec:XP} for some examples.

In particular, with the appropriate assumptions discussed in Sections~\ref{sec:firstorderanalysisexp} and~\ref{sec:firstorderanalysisgeneral}, we find that $\varepsilon$-critical points of $f$ on $\calM$ can be computed in $O(1/\varepsilon^{1.5})$ iterations. We also show an iteration complexity bound for the computation of approximate second-order critical points in Section~\ref{sec:secondorder}. Key differences with the Euclidean setting lie in the particular assumptions we make. We further study these assumptions in Sections~\ref{sec:regularity} and~\ref{sec:Dretr}. A subproblem solver---necessary to run ARC---is detailed in Section~\ref{sec:subproblem}. Our algorithm is implemented in the Manopt framework~\citep{manopt} and distributed as part of that toolbox. In Section~\ref{sec:XP}, we close with numerical comparisons to existing solvers, in particular the related Riemannian trust-region method (RTR)~\citep{genrtr}.

\subsection*{Main results}

An important ingredient of ARC on manifolds (Algorithm~\ref{algo:ARC}) is the \emph{retraction} $\Retr$, which allows one to move around the manifold by following tangent vectors. This notion is defined in Section~\ref{sec:arc}. Our results depend on the choice of retraction.

For a twice continuously differentiable cost function $f \colon \calM \to \reals$, the first- and second-order necessary optimality conditions at $x$ read~\citep{yang2012optimality}:
\begin{align*}
	\|\grad f(x)\|_x & = 0, & & \lambdamin(\Hess f(x)) \geq 0,
\end{align*}
where $\grad f$ and $\Hess f$ are the Riemannian gradient and Hessian of $f$---see Section~\ref{sec:firstorderanalysisexp} for definitions; $\|\cdot\|_x$ is the Riemannian norm at $x$ and $\lambdamin$ extracts the smallest eigenvalue of a symmetric operator.

Our first main result applies to \emph{complete} Riemannian manifolds, for which we can use the so-called \emph{exponential map} as retraction $\Retr$. The statement below summarizes more explicit results of Sections~\ref{sec:firstorderanalysisexp} and~\ref{sec:secondorder}, stating iterates of ARC eventually satisfy the necessary optimality conditions up to some tolerance, with a bound on the number of iterations this may require.
\begin{theorem} \label{thm:informalEXP}
	Consider a cost function $f$ on a complete Riemannian manifold $\calM$. If
	\begin{enumerate}
		\item[a)] $f$ is lower bounded (\aref{assu:lowerbound}), and
		\item[b)] the Riemannian Hessian of $f$ is Lipschitz continuous (\aref{assu:firstorderregularityscalar}, \aref{assu:firstorderregularityvector}),
	\end{enumerate}
	then, for any $x_0\in\calM$ and $\varepsilon > 0$, Algorithm~\ref{algo:ARC} with the exponential retraction produces an iterate $x_k \in \calM$ such that $f(x_k) \leq f(x_0)$, $\|\grad f(x_k)\|_{x_k} \leq \varepsilon$ and (if condition~\eqref{eq:secondorderprogress} is enforced) $\lambdamin(\Hess f(x_k)) \geq -\sqrt{\varepsilon}$, with $k = \tilde O(1/\varepsilon^{1.5})$. 
	The bound is dimension- and curvature-free.
\end{theorem}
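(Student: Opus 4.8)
The plan is to lift everything to tangent spaces and then replay the Euclidean ARC analysis (Cartis--Gould--Toint, \citealp{cartis2011adaptivecubic}), with the manifold-specific hypotheses of Sections~\ref{sec:firstorderanalysisexp} and~\ref{sec:secondorder} doing precisely the work needed to keep the Euclidean bookkeeping intact. At iterate $x_k$ I would study the pullback $\hat f_k = f \circ \Exp_{x_k} \colon \mathrm{T}_{x_k}\calM \to \reals$, an ordinary function on the Euclidean inner-product space $\mathrm{T}_{x_k}\calM$, satisfying $\hat f_k(0) = f(x_k)$, $\grad \hat f_k(0) = \grad f(x_k)$, and $\Hess \hat f_k(0) = \Hess f(x_k)$. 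The ARC model $m_k(s) = f(x_k) + \inner{\grad f(x_k)}{s} + \tfrac12\inner{\Hess f(x_k)[s]}{s} + \tfrac{\sigma_k}{3}\norm{s}^3$ is then exactly the cubic-regularized second-order Taylor expansion of $\hat f_k$ at the origin.

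\emph{Step 1 (regularity inequalities).} I would convert the Lipschitz-Hessian assumptions \aref{assu:firstorderregularityscalar}--\aref{assu:firstorderregularityvector} into the two quantitative estimates the Euclidean proof rests on: a function bound of the form $f(\Exp_{x_k}(s)) \le f(x_k) + \inner{\grad f(x_k)}{s} + \tfrac12\inner{\Hess f(x_k)[s]}{s} + \tfrac{L}{6}\norm{s}^3$, and a gradient bound controlling the deviation of $\grad f(\Exp_{x_k}(s))$ from the transported Taylor prediction $\mathcal{P}\big(\grad f(x_k) + \Hess f(x_k)[s]\big)$ by $\tfrac{L}{2}\norm{s}^2$, where $\mathcal{P}$ is the relevant adjoint of $\D\Exp_{x_k}(s)$. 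The point of isolating the assumptions as the paper does is that these inequalities then read verbatim like their Euclidean counterparts: the failure of $\D\Exp_{x_k}(s)$ to be an isometry (a curvature effect) and the dimension of $\calM$ are absorbed into the single constant $L$, which is why the final iteration bound will be dimension- and curvature-free.

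\emph{Step 2 (bounded regularization, guaranteed decrease).} From the function bound, whenever $\sigma_k \gtrsim L$ the ratio of achieved to predicted decrease clears the acceptance threshold, so the step is accepted and $\sigma$ is not increased; this caps $\sigma_k \le \sigma_{\max} = O(L)$ for all $k$. Combining this with the subproblem termination criteria of Section~\ref{sec:subproblem} (model gradient small relative to $\norm{s_k}^2$, and model decrease at least Cauchy-type) yields: on every successful iteration $f(x_k) - f(x_{k+1}) \ge c\,\sigma_k \norm{s_k}^3$; and whenever $\norm{\grad f(x_{k+1})}_{x_{k+1}} > \varepsilon$ the gradient bound of Step~1 forces $\norm{s_k} \ge c'\sqrt{\varepsilon/\sigma_{\max}}$. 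Hence each successful iteration before termination decreases $f$ by at least $c''\,\varepsilon^{1.5}/\sqrt{L}$.

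\emph{Step 3 (counting, second order, monotonicity).} Since $f \ge \flow$ (\aref{assu:lowerbound}), the number of successful iterations before $\norm{\grad f(x_k)}_{x_k} \le \varepsilon$ is at most $O\big((f(x_0)-\flow)\sqrt{L}/\varepsilon^{1.5}\big)$; a standard amortization of $\sigma_k \in [\sigma_0,\sigma_{\max}]$ (multiplied up on failures, divided down on successes) bounds the unsuccessful iterations by a constant multiple of the successful ones plus $O(\log(\sigma_{\max}/\sigma_0))$, giving the $\tilde O(1/\varepsilon^{1.5})$ total. For the second-order claim I would run the analogous argument: with \eqref{eq:secondorderprogress} enforced, every iteration at which $\lambdamin(\Hess f(x_k)) < -\sqrt{\varepsilon}$ is forced to exploit the negative-curvature direction and hence also yields $\Omega(\varepsilon^{1.5})$ decrease, so the same counting bounds the iterations needed to reach $\lambdamin(\Hess f(x_k)) \ge -\sqrt{\varepsilon}$; the monotonicity $f(x_k) \le f(x_0)$ is immediate since only accepted (descent) steps move the iterate. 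The genuinely new difficulty is Step~1---putting the gradient regularity of $f \circ \Exp$ into a clean enough form, since $\grad \hat f_k(s) = (\D\Exp_{x_k}(s))^*[\grad f(\Exp_{x_k}(s))]$ drags in the differential of the exponential map (Jacobi fields); this is exactly what Sections~\ref{sec:firstorderanalysisexp}, \ref{sec:regularity} and~\ref{sec:Dretr} supply, and once those are in hand the remainder is careful but essentially mechanical transcription of the Euclidean proof.
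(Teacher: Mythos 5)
Your outline reproduces the paper's own proof architecture almost exactly: pullback to the tangent space, the two Taylor-type regularity inequalities (Proposition~\ref{prop:LipschitzHessianBounds} feeding \aref{assu:firstorderregularityscalar} and \aref{assu:firstorderregularityvector}), the cap $\varsigma_k \le \varsigma_{\max} = O(L)$ (Lemma~\ref{lem:varsigmamax}), the step-length lower bound from the inner stopping rule $\|\nabla m_k(s_k)\| \le \theta\|s_k\|^2$, the telescoping count of successful iterations (Proposition~\ref{prop:sumnormskcube} and Theorem~\ref{thm:masterboundSLipschitzHessian}), the amortization of failures (Lemma~\ref{lem:boundKsuccessfulsteps}), and the second-order count via~\eqref{eq:secondorderprogress} forcing $\|s_k\| \gtrsim -\lambdamin(\nabla^2\hat f_k(0))/(\theta+2\varsigma_{\max})$ (Theorem~\ref{thm:masterboundSsecond}). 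Two small corrections on mechanism: the guaranteed decrease per successful step comes not from a Cauchy-type model decrease (only $m_k(s_k)\le m_k(0)$ is required) but from the extra $\tfrac{\varsigma_k}{3}\|s_k\|^3$ in the denominator of the regularized ratio~\eqref{eq:rhok}, following \citet{bcgmt}.

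More substantively, in your Step~1 the transport operator in the gradient bound must be parallel transport $P_s$ along the geodesic, \emph{not} the adjoint of $\D\Exp_{x_k}(s)$. These differ on curved manifolds, and the distinction is exactly what makes the theorem curvature-free: the paper's proof decomposes $\nabla m_k(s_k) = P_{s_k}^{-1}\grad f(x_{k+1}) + (\text{error}) + \varsigma_k\|s_k\|s_k$ and uses that $P_{s_k}$ is an isometry, so $\|P_{s_k}^{-1}\grad f(x_{k+1})\| = \|\grad f(x_{k+1})\|$ with no curvature term. If you instead route the argument through $\nabla\hat f_k(s_k) = T_{s_k}^*\grad f(x_{k+1})$ with $T_{s_k} = \D\Exp_{x_k}(s_k)$, you only get $\|\nabla\hat f_k(s_k)\| \ge \sigmamin(T_{s_k})\|\grad f(x_{k+1})\|$, and $\sigmamin(T_{s_k})$ degrades with positive curvature (Proposition~\ref{prop:mastercorollaryexponetial}); that is the general-retraction analysis of Section~\ref{sec:firstorderanalysisgeneral}, which needs \aref{assu:DRetr} and yields Theorem~\ref{thm:informalRETR}, not the curvature-free Theorem~\ref{thm:informalEXP}. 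Relatedly, Sections~\ref{sec:regularity} and~\ref{sec:Dretr}, which you invoke at the end, are not needed for the exponential-map statement; the Jacobi-field machinery is deliberately bypassed there by phrasing \aref{assu:firstorderregularityvector} in terms of parallel transport and exploiting that the velocity field of a geodesic is parallel (see Appendix~\ref{app:firstorderexp}).
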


Our second main result is an extension of the above which allows us to use other retractions, the main motivation being that the exponential map may be unavailable or expensive to compute. We state it as a summary of results in Sections~\ref{sec:firstorderanalysisgeneral} and~\ref{sec:secondorder}.
\begin{theorem} \label{thm:informalRETR}
	Consider a cost function $f$ on a Riemannian manifold $\calM$ equipped with a retraction $\Retr$. If
	\begin{enumerate}
		\item[a)] $f$ is lower bounded (\aref{assu:lowerbound}),
		\item[b)] the pullbacks $f \circ \Retr_x$ satisfy a type of second-order Lipschitz condition (\aref{assu:firstorderregularityscalar}, \aref{assu:firstorderregularityvectorretraction}), and
		\item[c)] the differential of the retraction is well behaved (\aref{assu:DRetr}),
	\end{enumerate}
	then, for any $x_0\in\calM$ and sufficiently small $\varepsilon > 0$, Algorithm~\ref{algo:ARC} with retraction $\Retr$ produces an iterate $x_k \in \calM$ such that $f(x_k) \leq f(x_0)$, $\|\grad f(x_k)\|_{x_k} \leq \varepsilon$ and (if condition~\eqref{eq:secondorderprogress} is enforced and $\Retr$ is second order) $\lambdamin(\Hess f(x_k)) \geq -\sqrt{\varepsilon}$, with $k = \tilde O(1/\varepsilon^{1.5})$. 
\end{theorem}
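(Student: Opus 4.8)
The plan is to follow the template of the Euclidean ARC complexity analysis, but to run the entire argument on \emph{pullbacks}. At an iterate $x_k$, set $\hat f_k \defeq f \circ \Retr_{x_k}$, a function on the Euclidean space $(\T_{x_k}\calM, \langle\cdot,\cdot\rangle_{x_k})$; one step of Algorithm~\ref{algo:ARC} is then exactly one step of Euclidean ARC applied to $\hat f_k$: approximately minimize the cubic model
\[
	m_k(s) = \hat f_k(0) + \langle \grad \hat f_k(0), s\rangle_{x_k} + \tfrac12\langle \Hess \hat f_k(0)[s], s\rangle_{x_k} + \tfrac{\sigma_k}{3}\|s\|_{x_k}^3,
\]
accept or reject $x_{k+1} = \Retr_{x_k}(s_k)$ according to the ratio of achieved to predicted decrease, and update $\sigma_k$ accordingly. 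Two identities bridge the pullback and the manifold: $\grad \hat f_k(0) = \grad f(x_k)$ always, and $\Hess \hat f_k(0) = \Hess f(x_k)$ when $\Retr$ is a second-order retraction (needed only for the second-order guarantee). Assumptions (a)--(c) are exactly what is required to port each piece of the Euclidean proof.

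First I would establish the per-iteration decrease. Assumption (b)---a second-order Lipschitz-type inequality for the pullbacks, valid for $\|s\|_{x_k}$ in a fixed range---gives $\big|\hat f_k(s) - \big(m_k(s) - \tfrac{\sigma_k}{3}\|s\|_{x_k}^3\big)\big| \le \tfrac{L}{6}\|s\|_{x_k}^3$, so the model overestimates $\hat f_k$ as soon as $\sigma_k$ exceeds a fixed multiple of $L$; hence any iteration with $\sigma_k$ that large is successful, which caps $\sigma_k$ at some $\sigma_{\max} = O(L)$. The standard bookkeeping---each unsuccessful iteration multiplies $\sigma$ by a constant $>1$, each successful one divides it by at most a constant---then bounds the number of unsuccessful iterations by a constant times the number of successful ones plus $O(\log(\sigma_{\max}/\sigma_0))$, so it suffices to count successful iterations.

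Next I would turn step length into near-stationarity at the next iterate. The subproblem termination criterion together with the structure of $m_k$ yields $\|\grad \hat f_k(s_k)\|_{x_k} \le \kappa(\sigma_k + L)\|s_k\|_{x_k}^2$ for a universal $\kappa$. Here assumption (c), \aref{assu:DRetr}, does the decisive work: the chain rule gives $\grad \hat f_k(s_k) = (\D\Retr_{x_k}(s_k))^{*}[\grad f(x_{k+1})]$, and \aref{assu:DRetr} guarantees $\D\Retr_{x_k}(s_k)$ is invertible with uniformly bounded inverse over the step lengths ARC actually produces, so $\|\grad f(x_{k+1})\|_{x_{k+1}} \le C\|\grad \hat f_k(s_k)\|_{x_k} = O(\|s_k\|_{x_k}^2)$. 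Consequently, as long as $\|\grad f(x_{k+1})\|_{x_{k+1}} > \varepsilon$ we have $\|s_k\|_{x_k} \gtrsim \sqrt{\varepsilon}$, so each such successful step (via the ARC model-decrease lemma $m_k(0)-m_k(s_k) \ge \tfrac{\sigma_k}{6}\|s_k\|^3$ and the success test) decreases $f$ by at least $\Omega(\sigma_{\min}\varepsilon^{3/2})$; telescoping against the lower bound of assumption (a) bounds the number of such iterations by $O\big((f(x_0) - \flow)\,\varepsilon^{-3/2}\big)$, giving the claimed $\tilde O(\varepsilon^{-3/2})$. The requirement that $\varepsilon$ be small is precisely what keeps the step lengths in the range where (b) and (c) supply legitimate constants $L$ and $C$. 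For the second-order claim, when $\Retr$ is second order we additionally use $\Hess \hat f_k(0) = \Hess f(x_k)$, so if condition~\eqref{eq:secondorderprogress} forces $s_k$ to exploit any model-Hessian eigenvalue below $-\sqrt\varepsilon$, the same model-decrease estimate shows such a step again buys $\Omega(\varepsilon^{3/2})$ decrease and can occur at most $O(\varepsilon^{-3/2})$ times; at termination both conditions hold simultaneously.

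The hard part will be the third step: transferring the pullback gradient $\grad \hat f_k(s_k)$ back to the genuine Riemannian gradient $\grad f(x_{k+1})$ without losing control of the constant $C$---equivalently, making precise and then verifying that $\D\Retr_{x_k}(s_k)$ stays boundedly invertible along the whole run of Algorithm~\ref{algo:ARC}. This is exactly what \aref{assu:DRetr} is designed to encode, and it is why the retraction version needs $\varepsilon$ small: with the exponential map one sidesteps the issue entirely by comparing $\grad f(x_{k+1})$ with $\grad f(x_k) + \Hess f(x_k)[s_k]$ parallel-transported along the connecting geodesic, a comparison that costs nothing because parallel transport is an isometry, so $\D\Retr$ never appears there. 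A secondary technical point is to recast the model-versus-function decrease lemma so that the Lipschitz hypothesis lives on the pullbacks $f\circ\Retr_{x_k}$ rather than on $f$ itself; this is what makes the whole argument retraction-agnostic in the first place, and studying when such a condition actually holds is deferred to Sections~\ref{sec:regularity} and~\ref{sec:Dretr}.
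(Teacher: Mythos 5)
Your proposal is correct and follows essentially the same route as the paper: bound $\varsigma_k$ via \aref{assu:firstorderregularityscalar} (Lemma~\ref{lem:varsigmamax}), reduce the total count to counting successful iterations (Lemma~\ref{lem:boundKsuccessfulsteps}), convert the subproblem stopping test~\eqref{eq:firstorderprogress} plus \aref{assu:firstorderregularityvectorretraction} and \aref{assu:DRetr} into $\|\grad f(x_{k+1})\| = O(\|s_k\|^2)$, and telescope the guaranteed decrease against \aref{assu:lowerbound}, using $\Hess f(x_k)=\nabla^2\hat f_k(0)$ for second-order retractions to get the eigenvalue claim (Theorems~\ref{thm:masterboundSLipschitzHessianretraction} and~\ref{thm:masterboundSsecond}, Corollary~\ref{cor:mastercorollarysecond}). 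The only detail you gloss over is the extra term $q(\|s_k\|)\,\|\grad f(\Retr_{x_k}(s_k))\|$ in \aref{assu:firstorderregularityvectorretraction} and the resulting partition into short steps, where $b - q(r) > 0$ lets you absorb that term into the left-hand side, and long steps, which are few because each forces an $\Omega(r^3)$ decrease; this partition is exactly where the ``sufficiently small $\varepsilon$'' requirement enters, as you correctly anticipate.
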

We further provide sufficient conditions for the assumptions on the pullbacks and the retraction to be satisfied, in Sections~\ref{sec:regularity} and~\ref{sec:Dretr} respectively. For example, \aref{assu:DRetr} is satisfied if the sublevel set of $x_0$ is compact.

\subsection*{Related work}

Numerous algorithms for unconstrained optimization have been generalized to Riemannian manifolds~\citep{luenberger1972gradient,Gabay1982,smith1994optimization,edelman1998geometry,AMS08}, among them gradient descent, nonlinear conjugate gradients, stochastic gradients \citep{bonnabel2013stochastic,zhang2016riemannian}, BFGS~\citep{ring2012optimization}, Newton's method \citep{adler2002spine} and trust-regions~\citep{genrtr}. See these references and also our numerical experiments in Section~\ref{sec:XP} for a discussion of numerous applications.

ARC in particular was extended to manifolds in the PhD thesis of~\citet{qi2011thesis}. There, under a different set of regularity assumptions, asymptotic convergence analyses are proposed, in the same spirit as the analyses presented in the aforementioned references for other methods. Qi also presents local convergence analyses, showing superlinear local convergence under some assumptions.

In contrast, we here favor a global convergence analysis with explicit bounds on iteration complexity to reach approximate criticality.
Such bounds are standard in optimization on Euclidean spaces. Around the same time, they have been generalized to Riemannian gradient descent and other algorithms by \citet{zhang2016complexitygeodesicallyconvex} (focusing on geodesic convexity), by \citet{bento2017iterationcomplexity} (looking also at proximal point methods), and by \citet{boumal2016globalrates} (also analyzing RTR). In the first two works, the regularity assumptions on the cost function are close in spirit to those we lay out in Section~\ref{sec:firstorderanalysisexp}, whereas in the third work the assumptions are closer to our Section~\ref{sec:firstorderanalysisgeneral}.

Closest to our work, \citet{zhang2018cubicregmanifold} recently proposed a convergence analysis of a cubically regularized method on manifolds, also establishing an $O(1/\varepsilon^{1.5})$ iteration complexity. Their analysis (independent from ours: early versions of our results appeared on public repositories around the same time, theirs two weeks before ours) focuses on compact submanifolds of a Euclidean space and uses a fixed regularization parameter (which must be set properly by the user). Subproblems are assumed to be solved to global optimality, though it appears this could be relaxed within their framework. We improve on these points as follows: our analysis is intrinsic (no embedding space is ever referenced), we do not need $\calM$ to be compact, our regularization parameter $\varsigma_k$ is dynamically adapted (which is both easier for the user and more efficient), and the subproblem solver only needs to meet weak requirements to reach approximate criticality. These improvements lead to implementable, competitive algorithms. \citet{zhang2018cubicregmanifold} also study superlinear local convergence rates, in line with~\citet{qi2011thesis} but with different assumptions.

The work by \citet{zhang2018cubicregmanifold} is also related to adaptive \emph{quadratic} regularization on embedded submanifolds of Euclidean space recently studied by~\citet{hu2017adaptive}, where the quadratic model is written in terms of the Euclidean gradient and Hessian.

More recently, two independent papers generalize work by \citet{jin2019escape} to provide iteration complexity bounds for a Riemannian version of perturbed gradient descent, allowing to reach approximate second-order criticality without looking at the Hessian, and with logarithmic dependence in the dimension of the manifold. \citet{sun2019prgd} provide an analysis based on regularity assumptions akin to the ones we lay out in Section~\ref{sec:firstorderanalysisexp}, while \citet{criscitiello2019escapingsaddles} base their analysis on regularity assumptions closer to the ones we lay out in Section~\ref{sec:firstorderanalysisgeneral}.

Our complexity analysis builds on prior work for the Euclidean case by~\citet{cartis2011adaptivecubic} and \citet{bcgmt}. Complexity lower bounds given by \citet{cartis2018worst} and \citet{carmon2017lower} show that the bounds in \citep{nesterov2006cubic,cartis2011adaptivecubic,bcgmt} are optimal in $\varepsilon$-dependency for the appropriate class of functions. A variant of ARC that is closely related to trust-region methods was presented in \citep{dussault2018arcq}.

Recently, various works have focused on efficiently solving the ARC subproblem (that is, minimizing $m_k$ as defined in~\eqref{eq:mk}) in the Euclidean setting. \citet{agarwal2017finding} propose an efficient method to solve the subproblem leading to fast algorithms for converging to second-order local minima in the Euclidean setting. \citet{carmon2016gradient} and \citet{tripuraneni2017stochastic} propose gradient descent--based methods to solve the subproblem.
Several recent papers consider the effect of subsampling on the subproblem~\citep{tripuraneni2017stochastic,kohler2017sub,zhou2018stochastic,zhang2018adaptive,wang2018stochastic}.

In the Riemannian case, the subproblem is posed on a tangent space, which is a linear subspace. Hence, all of the above methods are applicable in the Riemannian setting as well. In particular, we use the Krylov subspace method originally proposed in~\citep{cartis2011adaptivecubic}. Recently, \citet{carmon2018analysis} and~\citet{gould2019regularizedquadratic} provided a bound on the amount of work this method may require to provide sufficient progress (see also Remark~\ref{rem:carmonduchisubproblem}).



On a technical note, in Definition~\ref{def:retrscndnice} we formulate second-order assumptions on the retraction to disentangle the requirements on $f$ from those on the retraction. These are related to (but differ from) the assumptions and discussions in~\citep{ring2012optimization}, specifically Lemma 6, Propositions 5 and 7, and Remarks 2 and 3 in that reference.


\section{ARC on manifolds} \label{sec:arc}

\begin{algorithm}[t]
	\caption{Riemannian adaptive regularization with cubics (ARC)}
	\label{algo:ARC}
	\begin{algorithmic}[1]
		\State \textbf{Parameters: } $\theta > 0$, $\varsigma_{\min} > 0$, $0 < \eta_1 \leq \eta_2 < 1$, $0 < \gamma_1 < 1 < \gamma_2 < \gamma_3$ 
		\State \textbf{Input:} $x_0 \in \calM$, $\varsigma_0 \geq \varsigma_{\min}$
		\For{$k = 0, 1, 2\ldots$}
		\State Consider the pullback $\hat f_k = f \circ \Retr_{x_k} \colon \T_{x_k}\calM \to \reals$. Define the model $m_k$ on $\T_{x_k}\calM$: 
		\begin{align}
			m_k(s) & = \hat f_k(0) + \innersmall{s}{\nabla \hat f_k(0)} + \frac{1}{2}\innersmall{s}{\nabla^2 \hat f_k(0)[s]} + \frac{\varsigma_k}{3} \|s\|^3.
			\label{eq:mk}
		\end{align}
		\State Compute a step $s_k \in \T_{x_k}\calM$ satisfying first-order progress conditions (see Section~\ref{sec:subproblem}):
		\begin{align}
			m_k(s_k) & \leq m_k(0), & \textrm{ and } & & \|\nabla m_k(s_k)\| & \leq \theta \|s_k\|^2.
			\label{eq:firstorderprogress}
		\end{align}
		\Statex \hspace{4.5mm} Optionally, if second-order criticality is targeted, $s_k$ must also satisfy this condition:
		\begin{align}
			\lambdamin(\nabla^2 m_k(s_k)) \geq -\theta \|s_k\|,
			\label{eq:secondorderprogress}
		\end{align}
		\Statex \hspace{4.5mm} where $\lambdamin$ extracts the smallest eigenvalue of a symmetric operator.
		
		\State If $s_k = 0$, terminate (see Lemma~\ref{lem:termination}).
		\State Compute the regularized ratio of actual improvement over model improvement:
		\begin{align}
			\rho_k & = \frac{f(x_k) - f(\Retr_{x_k}(s_k))}{m_k(0) - m_k(s_k) + \frac{\varsigma_k}{3} \|s_k\|^3}.
			\label{eq:rhok}
		\end{align}
		\State If $\rho_k \geq \eta_1$, accept the step: $x_{k+1} = \Retr_{x_k}(s_k)$. Otherwise, reject it: $x_{k+1} = x_k$.
		\State Update the regularization parameter:
		\begin{align}
			\varsigma_{k+1} \in
			\begin{cases}
				\begin{aligned}
					& [\max(\varsigma_{\min}, \gamma_1 \varsigma_k), \varsigma_k] & & \textrm{ if } \rho_k \geq \eta_2 & & \textrm{ (very successful),} \\
					& [\varsigma_k, \gamma_2 \varsigma_k] & & \textrm{ if } \rho_k \in [\eta_1, \eta_2) & & \textrm{ (successful),} \\
					& [\gamma_2 \varsigma_k, \gamma_3 \varsigma_k] & & \textrm{ if } \rho_k < \eta_1 & & \textrm{ (unsuccessful).}
				\end{aligned}
			\end{cases}
			\label{eq:varsigmaupdate}
		\end{align}
		\EndFor
	\end{algorithmic}
\end{algorithm}

ARC on manifolds is listed as Algorithm~\ref{algo:ARC}. It is a direct adaptation from~\citep{cartis2011adaptivecubic,bcgmt}. Like many other optimization algorithms, its generalization to manifolds relies on a chosen {retraction}~\citep{shub1986some,AMS08}. For some $x\in\calM$, let $\T_x\calM$ denote the tangent space at $x$: this is a linear space. Intuitively, a retraction $\Retr$ on a manifold provides a means to move away from $x$ along a tangent direction $s\in\T_x\calM$ while remaining on the manifold, producing $\Retr_x(s) \in \calM$.
For a formal definition, we use the \emph{tangent bundle},
\begin{align*}
	\T\calM & = \{ (x, s) : x \in \calM \textrm{ and } s \in \T_x\calM \},
\end{align*}
which is itself a smooth manifold.
\begin{definition}[{\protect{Retraction~\citep[Def.~4.1.1]{AMS08}}}] \label{def:retraction}
	A \emph{retraction} on a manifold $\calM$ is a smooth mapping $\Retr$ from the tangent bundle $\T\calM$ to $\calM$ with the following properties. Let $\Retr_x \colon \T_x\calM \to \calM$ denote the restriction of $\Retr$ to $\T_x\calM$ through $\Retr_x(s) = \Retr(x, s)$. Then,
	\begin{enumerate}
		\item[(i)] $\Retr_x(0) = x$, where $0$ is the zero vector in $\T_x\calM$; and
		\item[(ii)] The differential of $\Retr_x$ at $0$, $\D\Retr_x(0)$, is the identity map on $\T_x\calM$.
	\end{enumerate}
\end{definition}
In other words: retraction curves $c(t) = \Retr_x(ts)$ are smooth and pass through $c(0) = x$ with velocity $c'(0) = \D\Retr_x(0)[s] = s$.
For the special case where $\calM$ is a linear space, the canonical retraction is $\Retr_x(s) = x+s$. For the unit sphere, a typical retraction is $\Retr_x(s) = \frac{x+s}{\|x+s\|}$.

Importantly, the retraction $\Retr$ chosen to optimize over a particular manifold $\calM$ is part of the algorithm specification. For a given cost function $f$ and a specified retraction $\Retr$, at iterate $x_k$, we define the \emph{pullback} of the cost function to the tangent space $\T_{x_k}\calM$:
\begin{align}
	\hat f_k & = f \circ \Retr_{x_k} \colon \T_{x_k}\calM \to \reals.
	\label{eq:fhatk}
\end{align}
This operation lifts $f$ to a linear space.
We then define a model $m_k \colon \T_{x_k}\calM \to \reals$, obtained as a truncated second-order Taylor expansion of the pullback with cubic regularization: see~\eqref{eq:mk}. We use the notation $\inner{\cdot}{\cdot}_x$ to denote the Riemannian metric on $\T_x\calM$, and we usually simplify this notation to $\inner{\cdot}{\cdot}$ when the base point is clear from context. Likewise, $\|s\|_x = \sqrt{\inner{s}{s}_x}$ is the norm of $s \in \T_x\calM$ induced by the Riemannian metric, and we usually omit the subscript, writing $\|s\|$.
Furthermore, for real functions on linear spaces (such as $\hat f_k$ and $m_k$), we let $\nabla$ and $\nabla^2$ denote the (usual) gradient and Hessian operators.

At iteration $k$, a \emph{subproblem solver} is used to approximately minimize the model $m_k$, producing a \emph{trial step} $s_k$: specific requirements are listed as~\eqref{eq:firstorderprogress} and~\eqref{eq:secondorderprogress}; for the first-order condition, we follow the lead of~\citet{bcgmt}. Section~\ref{sec:subproblem} discusses a practical algorithm.

The quality of the trial step $s_k$ is evaluated by computing $\rho_k$~\eqref{eq:rhok}: the \emph{regularized} ratio of actual to anticipated cost improvement, also following~\citet{bcgmt}. Note that the denominator of $\rho_k$ is equal to the difference between $\hat f_k(0)$ and the second-order Taylor expansion of $\hat f_k$ around 0 evaluated at $s_k$.
If $\rho_k \geq \eta_1$, we accept the trial step and set $x_{k+1} = \Retr_{x_k}(s_k)$: such steps are called \emph{successful}.
Among them, we further identify \emph{very successful} steps, for which $\rho_k \geq \eta_2$; for those, not only is the step accepted, but the regularization parameter $\varsigma_k$ is (usually) decreased.
Otherwise, we reject the step and set $x_{k+1} = x_k$: these steps are \emph{unsuccessful}, and we necessarily increase $\varsigma_k$.

%

We expect Algorithm~\ref{algo:ARC} to produce an infinite sequence of iterates. In practice of course, one would terminate the algorithm as soon as approximate criticality is achieved within some prescribed tolerance. This paper bounds the number of iterations this may require. In the unlikely event that the subproblem solver produces the trivial step $s_k = 0$, the algorithm cannot proceed. Fortunately, this only happens if we reached exact criticality of appropriate order. Proofs are in Appendix~\ref{app:arc}.

%
\begin{lemma}\label{lem:termination}
	If second-order progress~\eqref{eq:secondorderprogress} is not enforced, then the first-order condition~\eqref{eq:firstorderprogress} allows the subproblem solver to return $s_k = 0$ if and only if $\grad f(x_k) = 0$.
	If both~\eqref{eq:firstorderprogress} and~\eqref{eq:secondorderprogress} are enforced, the subproblem solver is allowed to return $s_k = 0$ if and only if $\grad f(x_k) = 0$ and $\Hess f(x_k)$ is positive semidefinite.
\end{lemma}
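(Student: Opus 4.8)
The plan is to reduce the statement to two elementary observations: how the Euclidean gradient and Hessian of the model $m_k$ at the origin relate to those of the pullback $\hat f_k$ at the origin, and how the latter relate to the Riemannian gradient and Hessian of $f$ at $x_k$. Throughout, $m_k$ is $C^2$ because $f$ is twice continuously differentiable and $\Retr$ is smooth, so the quantities appearing in~\eqref{eq:firstorderprogress} and~\eqref{eq:secondorderprogress} make sense.

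First I would differentiate the regularization term: the map $s \mapsto \frac{\varsigma_k}{3}\|s\|^3$ has gradient $\varsigma_k\|s\|s$, which is $O(\|s\|^2)$, so it is differentiable at $0$ with vanishing differential; hence this term contributes nothing to either $\nabla m_k(0)$ or $\nabla^2 m_k(0)$. Since $m_k$ otherwise coincides with the second-order Taylor polynomial of $\hat f_k$ at $0$, this gives $\nabla m_k(0) = \nabla \hat f_k(0)$ and $\nabla^2 m_k(0) = \nabla^2 \hat f_k(0)$. Next I would invoke the defining property $\D\Retr_{x_k}(0) = \Id$ of a retraction (Definition~\ref{def:retraction}): it yields $\nabla \hat f_k(0) = \grad f(x_k)$ for any retraction, and $\nabla^2 \hat f_k(0) = \Hess f(x_k)$ whenever $\grad f(x_k) = 0$---the standard fact that at a critical point the pullback Hessian does not depend on the chosen retraction (see, e.g.,~\citep{AMS08,boumal2020intromanifolds}).

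With these in hand, the first claim is immediate: when only~\eqref{eq:firstorderprogress} is imposed, $s_k = 0$ is admissible if and only if $m_k(0) \leq m_k(0)$, which always holds, and $\|\nabla m_k(0)\| \leq \theta \cdot 0^2 = 0$, i.e. $\nabla m_k(0) = 0$, i.e. $\grad f(x_k) = 0$. For the second claim, suppose both~\eqref{eq:firstorderprogress} and~\eqref{eq:secondorderprogress} are imposed. If $s_k = 0$ is admissible, the first-order part already forces $\grad f(x_k) = 0$, and~\eqref{eq:secondorderprogress} forces $\lambdamin(\nabla^2 m_k(0)) \geq -\theta\cdot 0 = 0$, i.e. $\nabla^2\hat f_k(0) \succeq 0$; since $x_k$ is critical, $\nabla^2\hat f_k(0) = \Hess f(x_k)$, so $\Hess f(x_k) \succeq 0$. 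Conversely, if $\grad f(x_k) = 0$ and $\Hess f(x_k) \succeq 0$, then $\nabla m_k(0) = \grad f(x_k) = 0$ and $\nabla^2 m_k(0) = \nabla^2\hat f_k(0) = \Hess f(x_k) \succeq 0$, so $s_k = 0$ satisfies all of~\eqref{eq:firstorderprogress} and~\eqref{eq:secondorderprogress}.

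The only genuinely delicate point---a mild obstacle rather than a real one---is checking that the cubic term is twice continuously differentiable at the origin with zero Hessian there, so that $\nabla^2 m_k(0) = \nabla^2\hat f_k(0)$ really holds, and correctly invoking the critical-point identity $\nabla^2\hat f_k(0) = \Hess f(x_k)$: it is this last identity that lets us state the conclusion in terms of $\Hess f(x_k)$ rather than the model Hessian, without assuming the retraction is second order.
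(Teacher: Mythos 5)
Your proof is correct and follows essentially the same route as the paper's: both reduce the claim to the identities $\nabla m_k(0)=\nabla\hat f_k(0)=\grad f(x_k)$ and, at a critical point, $\nabla^2 m_k(0)=\nabla^2\hat f_k(0)=\Hess f(x_k)$ (the paper cites~\eqref{eq:nablahatfTgradf} and~\eqref{eq:nablatwohatfHessfWzero} for these). Your extra care in checking that the cubic regularizer is $C^2$ at the origin with vanishing gradient and Hessian is a welcome, if minor, refinement of a step the paper leaves implicit.
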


%
%
%


We now introduce two basic assumptions about the cost function $f$, affording us two supporting lemmas.
The first common assumption is that the cost function $f$ is lower bounded. 
\begin{assumption} \label{assu:lowerbound}
	There exists a finite $\flow$ such that $f(x) \geq \flow$ for all $x \in \calM$.
\end{assumption}
The second assumption is that $f$ is sufficiently differentiable so that the models $m_k$ are well defined, and that second-order Taylor expansions of $\hat f_k$ in the tangent space at $x_k$ are sufficiently accurate. In the Euclidean case, the latter follows from a Lipschitz condition on the Hessian of $f$. In the next two sections, we discuss how this generalizes to manifolds.
\begin{assumption} \label{assu:firstorderregularityscalar}
	The cost function $f$ is twice continuously differentiable.
	Furthermore, there exists a constant $L$ such that, at each iteration $k$, for the trial step $s_k$ selected by the subproblem solver, the pullback $\hat f_k = f \circ \Retr_{x_k}$ satisfies
	\begin{align}
		\hat f_k(s_k) - \left[ \hat f_k(0) + \innersmall{s_k}{\nabla \hat f_k(0)} + \frac{1}{2}\innersmall{s_k}{\nabla^2 \hat f_k(0)[s_k]} \right] & \leq \frac{L}{6} \|s_k\|^3.
	\end{align}
\end{assumption}
The two supporting lemmas below follow the standard Euclidean analysis. The first lemma establishes that the regularization parameter $\varsigma_k$ does not grow unbounded.
\begin{lemma}[{\protect\citet[Lem.~2.2]{bcgmt}}] \label{lem:varsigmamax}
	Under~\aref{assu:firstorderregularityscalar}, the regularization parameter remains bounded: for all $k$, it holds that $\varsigma_k \leq \varsigma_{\max}$, with
	\begin{align}
	\varsigma_{\max} = \max\left( \varsigma_0, \frac{L\gamma_3}{2(1-\eta_2)} \right).
	\label{eq:varsigmamax}
	\end{align}
\end{lemma}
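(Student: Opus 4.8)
The plan is to follow the classical Euclidean argument of \citet{cartis2011adaptivecubic,bcgmt}: show that whenever $\varsigma_k$ is already large enough — specifically $\varsigma_k \geq \tfrac{L}{2(1-\eta_2)}$ — the iteration $k$ must be very successful, so $\varsigma_{k+1} \leq \varsigma_k$ and the parameter cannot increase past this threshold (up to one possible overshoot by the factor $\gamma_3$). The key estimate is a lower bound on $\rho_k$. Write $\hat f_k$ for the pullback and note that, by definition of $m_k$ in~\eqref{eq:mk}, the denominator of $\rho_k$ in~\eqref{eq:rhok} equals
\begin{align*}
m_k(0) - m_k(s_k) + \frac{\varsigma_k}{3}\|s_k\|^3 = -\innersmall{s_k}{\nabla \hat f_k(0)} - \tfrac12 \innersmall{s_k}{\nabla^2\hat f_k(0)[s_k]},
\end{align*}
i.e.\ it is $\hat f_k(0)$ minus the second-order Taylor polynomial of $\hat f_k$ evaluated at $s_k$. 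From $m_k(s_k) \leq m_k(0)$ in~\eqref{eq:firstorderprogress} one gets $-\innersmall{s_k}{\nabla \hat f_k(0)} - \tfrac12 \innersmall{s_k}{\nabla^2\hat f_k(0)[s_k]} \geq \tfrac{\varsigma_k}{3}\|s_k\|^3$, so the denominator is at least $\tfrac{\varsigma_k}{3}\|s_k\|^3 > 0$ (the case $s_k = 0$ being excluded by the termination step).

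For the numerator, $f(x_k) - f(\Retr_{x_k}(s_k)) = \hat f_k(0) - \hat f_k(s_k)$. Adding and subtracting the second-order Taylor polynomial and applying \aref{assu:firstorderregularityscalar} gives
\begin{align*}
\hat f_k(0) - \hat f_k(s_k) \geq \Big( -\innersmall{s_k}{\nabla \hat f_k(0)} - \tfrac12 \innersmall{s_k}{\nabla^2\hat f_k(0)[s_k]} \Big) - \frac{L}{6}\|s_k\|^3.
\end{align*}
Therefore
\begin{align*}
1 - \rho_k = \frac{\big(\hat f_k(0) - \text{Taylor}\big) - \big(\hat f_k(0) - \hat f_k(s_k)\big)}{m_k(0)-m_k(s_k)+\tfrac{\varsigma_k}{3}\|s_k\|^3} \leq \frac{\tfrac{L}{6}\|s_k\|^3}{\tfrac{\varsigma_k}{3}\|s_k\|^3} = \frac{L}{2\varsigma_k}.
\end{align*}
Hence $\varsigma_k \geq \tfrac{L}{2(1-\eta_2)}$ forces $\rho_k \geq \eta_2$, i.e.\ the step is very successful and $\varsigma_{k+1} \leq \varsigma_k$. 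An easy induction on $k$ then yields $\varsigma_k \leq \max\big(\varsigma_0, \tfrac{L\gamma_3}{2(1-\eta_2)}\big)$: if $\varsigma_k$ exceeds the bound it cannot have been produced from a smaller value via the update rules~\eqref{eq:varsigmaupdate}, since the largest multiplicative jump is $\gamma_3$ and it is only applied when $\varsigma_k < \tfrac{L}{2(1-\eta_2)}$.

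I do not expect any real obstacle here; the argument is essentially a transcription of the Euclidean proof, and all the manifold-specific content has already been absorbed into \aref{assu:firstorderregularityscalar}, whose statement is tailored precisely so that this estimate goes through verbatim. The only points requiring a little care are bookkeeping ones: confirming the algebraic identity for the denominator of $\rho_k$, checking the denominator is strictly positive (which uses $m_k(s_k)\le m_k(0)$ together with $s_k\ne 0$ guaranteed by the termination check in the algorithm), and making the overshoot-by-$\gamma_3$ induction precise in all three branches of~\eqref{eq:varsigmaupdate}.
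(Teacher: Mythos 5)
Your proof is correct and follows essentially the same route as the paper's: both bound $1-\rho_k$ by $\tfrac{L}{2\varsigma_k}$ using \aref{assu:firstorderregularityscalar} for the numerator and the model-decrease condition $m_k(s_k)\le m_k(0)$ for the denominator, then conclude that large $\varsigma_k$ forces a very successful step so the parameter can overshoot the threshold by at most a factor $\gamma_3$. The only difference is cosmetic: you write out the denominator explicitly as the negated Taylor terms, whereas the paper keeps it as $f(\Retr_{x_k}(s_k)) - m_k(s_k) + \tfrac{\varsigma_k}{3}\|s_k\|^3$.
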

Conditioned on the conclusions of this lemma, the next lemma states that among the first $\bar k$ iterations of ARC, a certain number are sure to be successful.
\begin{lemma}[{\protect\citet[Thm.~2.1]{cartis2011adaptivecubic}}] \label{lem:boundKsuccessfulsteps}
	If $\varsigma_k \leq \varsigma_{\max}$ for all $k$ (as provided by Lemma~\ref{lem:varsigmamax}), then the number $K$ of successful iterations among $0, \ldots, \bar k-1$ satisfies
	\begin{align*}
		\bar k \leq \left(1 + \frac{|\log(\gamma_1)|}{\log(\gamma_2)}\right) K + \frac{1}{\log(\gamma_2)} \log\left(\frac{\varsigma_{\max}}{\varsigma_0}\right).
	\end{align*}
\end{lemma}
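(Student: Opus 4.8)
The plan is to follow the standard Euclidean argument: bound the number of unsuccessful iterations in terms of the number of successful ones by tracking how the regularization parameter $\varsigma_k$ changes from one iteration to the next, then add the two counts back together to recover $\bar k$.

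First I would partition $\{0, 1, \ldots, \bar k - 1\}$ into the set $\calS$ of successful iterations (those with $\rho_k \geq \eta_1$, including the very successful ones) and the set $\calU$ of unsuccessful iterations (those with $\rho_k < \eta_1$), so that $|\calS| = K$ and $\bar k = |\calS| + |\calU|$. The statement then reduces to showing
\begin{align*}
|\calU| \leq \frac{|\log(\gamma_1)|}{\log(\gamma_2)} K + \frac{1}{\log(\gamma_2)} \log\!\left(\frac{\varsigma_{\max}}{\varsigma_0}\right).
\end{align*}

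Next I would read off from the update rule~\eqref{eq:varsigmaupdate} two one-step multiplicative bounds on $\varsigma_k$: on every successful iteration $k \in \calS$ we have $\varsigma_{k+1} \geq \gamma_1 \varsigma_k$ (for merely successful steps in fact $\varsigma_{k+1} \geq \varsigma_k$, and for very successful ones $\varsigma_{k+1} \geq \max(\varsigma_{\min}, \gamma_1 \varsigma_k) \geq \gamma_1 \varsigma_k$, and since $0 < \gamma_1 < 1$ the weaker bound covers both cases); on every unsuccessful iteration $k \in \calU$ we have $\varsigma_{k+1} \geq \gamma_2 \varsigma_k$. Multiplying these inequalities over $k = 0, \ldots, \bar k - 1$ and using that all $\varsigma_k$ are positive yields
\begin{align*}
\varsigma_{\bar k} \geq \varsigma_0 \, \gamma_1^{|\calS|} \, \gamma_2^{|\calU|}.
\end{align*}
Combining this with the uniform upper bound $\varsigma_{\bar k} \leq \varsigma_{\max}$ assumed in the statement (as provided by Lemma~\ref{lem:varsigmamax}), taking logarithms, and rearranging using $\log(\gamma_1) = -|\log(\gamma_1)| < 0$ and $\log(\gamma_2) > 0$ gives exactly the displayed bound on $|\calU|$. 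Adding $|\calS| = K$ to both sides produces the claimed inequality for $\bar k$.

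I do not anticipate a genuine obstacle: the argument is elementary and purely arithmetic once the per-step behavior of $\varsigma_k$ is isolated. The only point requiring a little care is to confirm that $\varsigma_{k+1} \geq \gamma_1 \varsigma_k$ really holds on \emph{every} successful step, including the very successful ones where the $\max$ with $\varsigma_{\min}$ appears in~\eqref{eq:varsigmaupdate}; this is immediate since $\max(\varsigma_{\min}, \gamma_1 \varsigma_k) \geq \gamma_1 \varsigma_k$. Everything else is telescoping and taking logs.
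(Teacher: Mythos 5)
Your proposal is correct and follows essentially the same argument as the paper's proof: partition the first $\bar k$ iterations into successful and unsuccessful, read off the per-step lower bounds $\varsigma_{k+1} \geq \gamma_1 \varsigma_k$ (successful) and $\varsigma_{k+1} \geq \gamma_2 \varsigma_k$ (unsuccessful) from~\eqref{eq:varsigmaupdate}, multiply to get $\varsigma_{\bar k} \geq \varsigma_0 \gamma_1^{|\calS|}\gamma_2^{|\calU|}$, compare with $\varsigma_{\max}$, and take logarithms. The extra check that the bound holds on very successful steps despite the $\max$ with $\varsigma_{\min}$ is a correct (and welcome) detail that the paper leaves implicit.
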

In other words, in order to bound the \emph{total} number of iterations ARC may require to attain a certain goal, it is sufficient to bound the number of \emph{successful} iterations that goal may require. The following proposition (extracted from the main proof in~\citep{bcgmt}) further states that this can be done by showing successful steps are not too short.
\begin{proposition} \label{prop:sumnormskcube}
	Let $\{ (x_0, s_0), (x_1, s_1), \ldots \}$ be the set of iterates and trial steps generated by Algorithm~\ref{algo:ARC}. If~\aref{assu:lowerbound} holds, we have
	\begin{align*}
	\sum_{k\in\calS} \|s_k\|^3 \leq \frac{3(f(x_0) - \flow)}{\eta_1 \varsigma_{\min}},
	\end{align*}
	where $\calS$ is the set of successful iterations.
\end{proposition}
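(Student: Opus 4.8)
The plan is to exploit the monotonicity of the cost values $f(x_k)$ along the run of Algorithm~\ref{algo:ARC} and to telescope, bounding the total decrease by $f(x_0) - \flow$ via \aref{assu:lowerbound}. The starting point is the acceptance test: on a successful iteration $k \in \calS$ we have $\rho_k \geq \eta_1$, and by definition of $\rho_k$ in~\eqref{eq:rhok} this reads
\begin{align*}
	f(x_k) - f(\Retr_{x_k}(s_k)) \;\geq\; \eta_1\!\left( m_k(0) - m_k(s_k) + \tfrac{\varsigma_k}{3}\|s_k\|^3 \right).
\end{align*}
The first-order progress condition~\eqref{eq:firstorderprogress} guarantees $m_k(s_k) \leq m_k(0)$, so $m_k(0) - m_k(s_k) \geq 0$ and this term can simply be discarded. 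Using also $\varsigma_k \geq \varsigma_{\min}$ (true at every iteration by the initialization and the update rule~\eqref{eq:varsigmaupdate}), and $x_{k+1} = \Retr_{x_k}(s_k)$ on successful steps, we obtain
\begin{align*}
	f(x_k) - f(x_{k+1}) \;\geq\; \frac{\eta_1 \varsigma_{\min}}{3}\,\|s_k\|^3 \qquad\text{for all } k \in \calS.
\end{align*}

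Next I would note that on unsuccessful iterations $x_{k+1} = x_k$, hence $f(x_k) - f(x_{k+1}) = 0$; in particular the sequence $(f(x_k))_{k\ge0}$ is non-increasing and all telescoping increments are nonnegative. Therefore, for any $N$,
\begin{align*}
	\frac{\eta_1 \varsigma_{\min}}{3} \sum_{k \in \calS,\ k < N} \|s_k\|^3 \;\leq\; \sum_{k=0}^{N-1}\bigl( f(x_k) - f(x_{k+1}) \bigr) \;=\; f(x_0) - f(x_N) \;\leq\; f(x_0) - \flow,
\end{align*}
where the last inequality is \aref{assu:lowerbound}. Letting $N \to \infty$ (the partial sums on the left are monotone and bounded, so the limit exists) and rearranging yields the claimed bound $\sum_{k\in\calS}\|s_k\|^3 \leq \frac{3(f(x_0)-\flow)}{\eta_1\varsigma_{\min}}$.

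There is no serious obstacle here: the argument is a direct transcription of the Euclidean telescoping estimate, since the pullback construction keeps every quantity entering~\eqref{eq:rhok} and~\eqref{eq:firstorderprogress} on the linear space $\T_{x_k}\calM$. The only points that require a moment's care are (i) recognizing that the model-decrease term $m_k(0) - m_k(s_k)$ is nonnegative and can be dropped, so that the per-step decrease is controlled purely by the cubic regularization term, and (ii) handling the case where $\calS$ is infinite by passing through finite partial sums and taking a limit, rather than telescoping the full series directly.
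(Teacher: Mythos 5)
Your proof is correct and follows essentially the same route as the paper's: use $\rho_k \geq \eta_1$ together with~\eqref{eq:firstorderprogress} and $\varsigma_k \geq \varsigma_{\min}$ to lower-bound the per-step decrease by $\frac{\eta_1\varsigma_{\min}}{3}\|s_k\|^3$, then telescope using \aref{assu:lowerbound}. Your explicit handling of the infinite sum via finite partial sums is a minor tidying of the paper's direct telescoping, not a different argument.
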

\begin{proof}
	By definition, if iteration $k$ is successful, then $\rho_k \geq \eta_1$~\eqref{eq:rhok}. Combining with the first part of the first-order progress condition~\eqref{eq:firstorderprogress} yields
	\begin{align*}
	f(x_k) - f(x_{k+1}) \geq \eta_1\!\left(m_k(0) - m_k(s_k) + \frac{\varsigma_k}{3} \|s_k\|^3 \right)  \geq \frac{\eta_1\varsigma_{\min}}{3} \|s_k\|^3. 
	\end{align*}
	On the other hand, for unsuccessful iterations, $x_{k+1} = x_k$ and the cost does not change.
	Using~\aref{assu:lowerbound}, a telescoping sum yields:
	\begin{align*}
	f(x_0) - \flow & \geq \sum_{k=0}^\infty f(x_k) - f(x_{k+1}) = \sum_{k\in\calS} f(x_k) - f(x_{k+1}) \geq \frac{\eta_1 \varsigma_{\min}}{3} \sum_{k\in\calS} \|s_k\|^3,
	\end{align*}
	as announced.
\end{proof}

\section{First-order analysis with the exponential map}
\label{sec:firstorderanalysisexp}

In this section, we provide a first-order analysis of Algorithm~\ref{algo:ARC} for the case where $\calM$ is a complete manifold and we use the exponential retraction $\Retr = \Exp$---we define these terms momentarily. This notably encompasses the Euclidean case where $\calM = \Rn$, with $\Exp_x(s) = x+s$, as well as all compact or Hadamard manifolds. As such, the results in this section offer a strict generalization of the Euclidean analysis proposed in~\citep{bcgmt} under the assumption of Lipschitz continuous Hessian. An in-depth reference for the Riemannian geometry tools we use is the monograph by~\citet{lee2018riemannian}, while \citet{AMS08} offer an optimization-focused treatment.

On a complete Riemannian manifold $\calM$, for any point $x$ and tangent vector $v \in \T_x\calM$, there exists a unique smooth curve $\gamma_v \colon \reals \to \calM$ such that $\gamma_v(0) = x$, $\gamma_v'(0) = v$ and, for $t < t'$ close enough, $\gamma_v|_{[t, t']}$ is the shortest path connecting $\gamma_v(t)$ to $\gamma_v(t')$. This curve is called a \emph{geodesic}. The \emph{exponential map} is built from these geodesics as the map
\begin{align*}
	\Exp \colon \T\calM \to \calM \colon (x, v) \mapsto \Exp_x(v) = \gamma_v(1).
\end{align*}
This is a smooth map. Because $\Exp_x(tv) = \gamma_{tv}(1) = \gamma_v(t)$, we also find that $\Exp_x(0) = x$ and $\D\Exp_x(0)[v] = v$, so that the exponential map is indeed a retraction (Definition~\ref{def:retraction}).
If the manifold is not complete, then $\Exp$ is only defined on an open subset of $\T\calM$: when we need $\calM$ to be complete, we say so explicitly.

The Riemannian gradient of $f \colon \calM \to \reals$, denoted by $\grad f$, is the vector field on $\calM$ such that $\D f(x)[s] = \inner{\grad f(x)}{s}$, where $\D f(x)[s]$ is the directional derivative of $f$ at $x$ along the tangent direction $s$. One can show that
\begin{align}
	\grad f(x) & = \nabla(f \circ \Exp_x)(0) = \nabla \hat f_x(0),
	\label{eq:gradfpullback}
\end{align}
so that the Riemannian gradient of $f$ at $x$ is nothing but the Euclidean gradient of the pullback $\hat f_x = f \circ \Exp_x$ at the origin of the tangent space $\T_x\calM$ (see also Lemma~\ref{lem:derivativespullback} for a similar statement with retractions).

The Riemannian Hessian of $f$ is the covariant derivative of the gradient vector field, with respect to the Riemannian connection. Denoted by $\Hess f$, it defines a tensor field as follows: $\Hess f(x)$ is a linear operator from $\T_x\calM$ into itself, self-adjoint with respect to the Riemannian metric on that tangent space. Analogously to~\eqref{eq:gradfpullback}, one can show that
\begin{align}
	\Hess f(x) & = \nabla^2 \hat f_x(0),
	\label{eq:HessfHessfhatExp}
\end{align}
which expresses the Riemannian Hessian of $f$ at $x$ as the Euclidean Hessian of the pullback $\hat f_x$ at the origin of $\T_x\calM$. (Here too, see Lemma~\ref{lem:derivativespullback} below.)

These two statements show that the model $m_k$~\eqref{eq:mk} can be written equivalently as
\begin{align}
	m_k(s) & = f(x_k) + \inner{\grad f(x_k)}{s} + \frac{1}{2} \inner{\Hess f(x_k)[s]}{s} + \frac{\varsigma_k}{3} \|s\|^3
	\label{eq:mkExp}
\end{align}
and that~\aref{assu:firstorderregularityscalar} requires
\begin{align}
	f(\Exp_{x_k}(s_k)) - \left[ f(x_k) + \innersmall{s_k}{\grad f(x_k)} + \frac{1}{2}\innersmall{s_k}{\Hess f(x_k)[s_k]} \right] & \leq \frac{L}{6} \|s_k\|^3
	\label{eq:A2storyExp}
\end{align}
for each $(x_k, s_k)$ produced by Algorithm~\ref{algo:ARC}.

In particular, if $\calM$ is a Euclidean space with the exponential map $\Exp_x(s) = x+s$, it is well known that we can secure~\eqref{eq:A2storyExp} if we assume that the Hessian of $f$ is $L$-Lipschitz continuous. This can be written as
\begin{align*}
	\opnorm{\nabla^2 f(x) - \nabla^2 f(y)} & \leq L \|x-y\|,
\end{align*}
where the norm on the left-hand side is the operator norm.
Generalizing this to the Riemannian setting, we face the issue that $\Hess f(x)$ and $\Hess f(y)$ are linear operators defined on distinct tangent spaces (if $x \neq y$): in order to compare them, we need one more tool to compare tangent vectors in distinct tangent spaces.

Given a smooth curve $c \colon [0, 1] \to \calM$ connecting $c(0) = x$ to $c(1) = y$, consider a tangent vector $v \in \T_x\calM$ and a smooth vector field $Z \colon [0, 1] \to \T\calM$ along $c$---that is, $Z(t) \in \T_{c(t)}\calM$---such that $Z(0) = v$.
If the covariant derivative of $Z$ with respect to the Riemannian connection vanishes identically, then we say that $Z$ is a \emph{parallel vector field} along $c$. (For example, the velocity vector field $\gamma'$ of a geodesic $\gamma$ is parallel.) This vector field exists and is unique. We call $Z(1)$ the \emph{parallel transport} of $v$ from $x$ to $y$ along $c$. Parallel transports are linear isometries with respect to the Riemannian metric, and they depend on the chosen path.

Using parallel transports, we can formulate a standard notion of Lipschitz continuity for Riemannian Hessians.
We use the following notation often: given a tangent vector $s \in \T_x\calM$, 
\begin{align}
	P_s \colon \T_x\calM \to \T_{\Exp_x(s)}\calM
	\label{eq:Ps}
\end{align}
denotes parallel transport along the geodesic $\gamma(t) = \Exp_x(ts)$ from $t = 0$ to $t = 1$.
\begin{definition} \label{def:LipschitzHessRiemann}
	A function $f \colon \calM \to \reals$ on a Riemannian manifold $\calM$ has an \emph{$L$-Lipschitz continuous Hessian} if it is twice differentiable and if, for all $(x, s)$ in the domain of $\Exp$,
	\begin{align*}
		\opnorm{ P_s^{-1} \circ \Hess f(\Exp_x(s)) \circ P_s - \Hess f(x)} & \leq L \|s\|. 
	\end{align*}
\end{definition}
For $f$ three times continuously differentiable, this property holds if and only if the covariant derivative of the Riemannian Hessian is uniformly bounded by $L$ (we omit a proof). In particular, this holds with some $L$ for any smooth function on a compact manifold.
In the Euclidean case, parallel transports are identity maps (independent of the transport curve), so that this is equivalent to the usual definition.

Crucially, for cost functions with Lipschitz Hessian, we recover familiar-looking bounds on Taylor expansions of both $f$ itself and, as will be instrumental momentarily, of $\grad f$. Results of this nature are standard: they appear frequently in complexity analyses for Riemannian optimization, see for example~\citep{ferreira2002kantorovichnewton,bento2017iterationcomplexity,sun2019prgd}. Proofs are in Appendix~\ref{app:firstorderexp}.
\begin{proposition} \label{prop:LipschitzHessianBounds}
	Let $f \colon \calM \to \reals$ be twice differentiable on a Riemannian manifold $\calM$. Given $(x, s)$ in the domain of $\Exp$, assume there exists $L \geq 0$ such that, for all $t \in [0, 1]$,
	\begin{align*}
		\opnorm{P_{ts}^{-1} \circ \Hess f(\Exp_x(ts)) \circ P_{ts} - \Hess f(x)} & \leq L \|ts\|. 
	\end{align*}
	Then, the two following inequalities hold:
	\begin{align*}
		\left| f(\Exp_x(s)) - f(x) - \inner{s}{\grad f(x)} - \frac{1}{2} \inner{s}{\Hess f(x)[s]} \right| & \leq \frac{L}{6} \|s\|^3, \textrm{ and }\\
		\left\| P_{s}^{-1} \grad f(\Exp_x(s)) - \grad f(x) - \Hess f(x)[s] \right\| & \leq \frac{L}{2} \|s\|^2.
	\end{align*}
\end{proposition}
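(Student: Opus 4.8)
The plan is to reduce both inequalities to one-dimensional calculus along the geodesic $\gamma(t) = \Exp_x(ts)$, $t \in [0,1]$, using two standard Riemannian facts: (i) the velocity field of a geodesic is parallel, so $\gamma'(t) = P_{ts}[s]$ (with $P_{ts}$ as in~\eqref{eq:Ps}), since $\gamma'(0) = s$; and (ii) parallel transport along $\gamma$ is a linear isometry intertwining the covariant derivative with ordinary differentiation of $\T_x\calM$-valued curves, i.e.\ $\ddt\big(P_{ts}^{-1}W(t)\big) = P_{ts}^{-1}\,\Ddt W(t)$ for any smooth vector field $W$ along $\gamma$. Together with the defining identity of the Hessian, $\Ddt (\grad f)(\gamma(t)) = \Hess f(\gamma(t))[\gamma'(t)]$, these let me compare $\Hess f(\gamma(t))$ with $\Hess f(x)$ --- operators on different tangent spaces --- precisely through the conjugation $P_{ts}^{-1}\circ \Hess f(\gamma(t)) \circ P_{ts}$ that appears in the hypothesis.

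First I would prove the gradient (second) estimate. Set $V(t) = P_{ts}^{-1}(\grad f)(\gamma(t)) \in \T_x\calM$, so $V(0) = \grad f(x)$, and by (i), (ii) and the Hessian identity $V'(t) = P_{ts}^{-1}\,\Hess f(\gamma(t))[\gamma'(t)] = \big(P_{ts}^{-1}\circ \Hess f(\gamma(t))\circ P_{ts}\big)[s]$. The fundamental theorem of calculus gives $P_s^{-1}\grad f(\Exp_x(s)) - \grad f(x) = \int_0^1 V'(t)\,\dt$, and subtracting $\Hess f(x)[s] = \int_0^1 \Hess f(x)[s]\,\dt$ shows the left-hand side of the second inequality equals the norm of $\int_0^1 \big(P_{ts}^{-1}\circ \Hess f(\gamma(t))\circ P_{ts} - \Hess f(x)\big)[s]\,\dt$, which is at most $\int_0^1 \opnorm{P_{ts}^{-1}\circ \Hess f(\gamma(t))\circ P_{ts} - \Hess f(x)}\,\|s\|\,\dt \leq \int_0^1 Lt\|s\|\cdot\|s\|\,\dt = \tfrac{L}{2}\|s\|^2$ by the hypothesis and $\|ts\| = t\|s\|$.

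Next I would deduce the function (first) estimate. Applying the gradient estimate to the pair $(x, ts)$ for fixed $t \in [0,1]$ --- whose hypothesis is exactly the one assumed for $(x,s)$ restricted to parameters $\leq t$ --- gives $\big\| P_{ts}^{-1}\grad f(\gamma(t)) - \grad f(x) - t\,\Hess f(x)[s] \big\| \leq \tfrac{L}{2}t^2\|s\|^2$. With $g(t) = f(\gamma(t))$ one has $g'(t) = \inner{\grad f(\gamma(t))}{\gamma'(t)} = \inner{P_{ts}^{-1}\grad f(\gamma(t))}{s}$ by the isometry property and (i); integrating and subtracting $\inner{s}{\grad f(x)} = \int_0^1 \inner{s}{\grad f(x)}\,\dt$ and $\tfrac12\inner{s}{\Hess f(x)[s]} = \int_0^1 \inner{s}{t\,\Hess f(x)[s]}\,\dt$ leaves $\int_0^1 \inner{s}{P_{ts}^{-1}\grad f(\gamma(t)) - \grad f(x) - t\,\Hess f(x)[s]}\,\dt$, bounded in modulus by $\int_0^1 \|s\|\cdot \tfrac{L}{2}t^2\|s\|^2\,\dt = \tfrac{L}{6}\|s\|^3$. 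Alternatively, a direct proof is possible: $g''(t) = \inner{\Hess f(\gamma(t))[\gamma'(t)]}{\gamma'(t)}$ since the geodesic term $\Ddt\gamma' = 0$ drops, so $g''(t) - g''(0) = \inner{(P_{ts}^{-1}\circ\Hess f(\gamma(t))\circ P_{ts} - \Hess f(x))[s]}{s}$ has modulus $\leq Lt\|s\|^3$, and Taylor's theorem with integral remainder $g(1) = g(0)+g'(0)+\int_0^1(1-t)g''(t)\,\dt$ yields the error $\int_0^1(1-t)(g''(t)-g''(0))\,\dt$, bounded by $L\|s\|^3\int_0^1 t(1-t)\,\dt = \tfrac{L}{6}\|s\|^3$.

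I do not expect a genuine analytic obstacle: the content is essentially bookkeeping. The one step to handle with care is the transfer between operators on $\T_{\gamma(t)}\calM$ and on $\T_x\calM$ --- verifying $\gamma'(t) = P_{ts}[s]$ and $\ddt\big(P_{ts}^{-1}W(t)\big) = P_{ts}^{-1}\Ddt W(t)$ so that, after conjugation by $P_{ts}$, the integrand matches exactly the quantity the hypothesis bounds. The regularity needed for the fundamental theorem of calculus and Taylor's theorem (namely $g \in C^2$ and $V \in C^1$) is supplied by $f$ being continuously twice differentiable.
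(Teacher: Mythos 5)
Your proposal is correct and follows essentially the same route as the paper: differentiate $V(t) = P_{ts}^{-1}\grad f(\gamma(t))$ using the parallelism of the geodesic velocity field and the Hessian identity, integrate to get the gradient bound, then integrate $g'(t) = \innersmall{P_{ts}^{-1}\grad f(\gamma(t))}{s}$ against that bound to get the scalar one. The only cosmetic difference is that the paper justifies $\ddt\big(P_{ts}^{-1}W(t)\big) = P_{ts}^{-1}\Ddt W(t)$ by expanding in a parallel frame (and first states the integration step for a general retraction before specializing to geodesics), whereas you invoke that intertwining fact directly.
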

This justifies the introduction of the following assumption, still with notation~\eqref{eq:Ps} for $P_{s_k}$.
\begin{assumption} \label{assu:firstorderregularityvector}
	There exists a constant $L'$ such that, at each successful iteration $k$, for the step $s_k$ selected by the subproblem solver, we have
	\begin{align}
		\left\| P_{s_k}^{-1} \grad f(\Exp_{x_k}(s_k)) - \grad f(x_k) - \Hess f(x_k)[s_k] \right\| & \leq \frac{L'}{2} \|s_k\|^2. 
	\end{align}
\end{assumption}
\begin{corollary}
	If $f \colon \calM \to \reals$ has $L$-Lipschitz continuous Hessian, then, for any sequence $\{(x_k, s_k)\}_{k=0,1,2,\ldots}$ in the domain of $\Exp$, $f$ satisfies~\aref{assu:firstorderregularityscalar} with the same $L$ (and $\Retr = \Exp$), and it satisfies~\aref{assu:firstorderregularityvector} with $L' = L$. In particular, if $f$ is smooth and $\calM$ is compact, assumptions \aref{assu:lowerbound}, \aref{assu:firstorderregularityscalar} and \aref{assu:firstorderregularityvector} hold. 
\end{corollary}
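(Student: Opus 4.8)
The plan is to obtain both assumptions as direct consequences of Proposition~\ref{prop:LipschitzHessianBounds} together with the pullback identities~\eqref{eq:gradfpullback} and~\eqref{eq:HessfHessfhatExp}. Fix a pair $(x_k, s_k)$ of the given sequence; by assumption it lies in the domain of $\Exp$. The first task is to verify the hypothesis of Proposition~\ref{prop:LipschitzHessianBounds} at this pair, namely the bound $\opnorm{P_{ts_k}^{-1} \circ \Hess f(\Exp_{x_k}(ts_k)) \circ P_{ts_k} - \Hess f(x_k)} \le L\|ts_k\|$ for all $t \in [0,1]$. I would get this from the definition of $L$-Lipschitz continuous Hessian (Definition~\ref{def:LipschitzHessRiemann}) applied to the pair $(x_k, ts_k)$, after observing that this pair is still in the domain of $\Exp$: since $\Exp_{x_k}(ts_k) = \gamma_{s_k}(t)$ lies on the geodesic segment $\gamma_{s_k}|_{[0,1]}$, the set of admissible tangent vectors at $x_k$ is star-shaped about the origin, so $(x_k, s_k)$ in the domain forces $(x_k, ts_k)$ in the domain for every $t \in [0,1]$. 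Moreover the operator $P_{ts_k}$ supplied by Definition~\ref{def:LipschitzHessRiemann} at $(x_k, ts_k)$ is, by the very definition~\eqref{eq:Ps}, the same parallel transport the proposition uses, so no further identification is needed.

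With the hypothesis verified, Proposition~\ref{prop:LipschitzHessianBounds} yields its two displayed inequalities at $(x_k, s_k)$. Writing $\hat f_k = f \circ \Exp_{x_k}$ and using $\hat f_k(0) = f(x_k)$, $\nabla \hat f_k(0) = \grad f(x_k)$ from~\eqref{eq:gradfpullback}, and $\nabla^2 \hat f_k(0) = \Hess f(x_k)$ from~\eqref{eq:HessfHessfhatExp}, the first inequality becomes exactly the bound required by~\aref{assu:firstorderregularityscalar} with constant $L$ and $\Retr = \Exp$, and the second becomes exactly the bound required by~\aref{assu:firstorderregularityvector} with $L' = L$. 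Since these assumptions only demand the respective bounds at the (successful) trial steps, whereas we have established them for all pairs in the domain of $\Exp$, both assumptions hold; this proves the first part of the corollary.

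For the last sentence, suppose $f$ is smooth and $\calM$ is compact. Then $f$ is continuous on a compact set, hence bounded below, giving~\aref{assu:lowerbound}. A compact manifold is complete, so $\Exp$ is defined on all of $\T\calM$ and the phrase ``in the domain of $\Exp$'' is automatically satisfied by any sequence. It remains to produce a finite $L$ for which $f$ has $L$-Lipschitz continuous Hessian: as recorded just after Definition~\ref{def:LipschitzHessRiemann}, for $f$ of class $C^3$ this is equivalent to the covariant derivative of $\Hess f$ being uniformly bounded in operator norm, and such a bound exists because that covariant derivative is a continuous tensor field on a compact manifold. Invoking the first part of the corollary with this $L$ then delivers~\aref{assu:firstorderregularityscalar} and~\aref{assu:firstorderregularityvector}.

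The only step here that is not pure bookkeeping is the domain check in the first paragraph---recognizing that the single-scale inequality in Definition~\ref{def:LipschitzHessRiemann} propagates to all intermediate scales $ts_k$ because the domain of $\Exp$ is radially star-shaped---after which everything reduces to substitution through the pullback identities and a standard compactness argument; I do not expect any genuine obstacle.
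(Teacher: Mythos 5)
Your proposal is correct and follows exactly the route the paper intends: the corollary is stated without proof as an immediate consequence of Proposition~\ref{prop:LipschitzHessianBounds}, the pullback identities~\eqref{eq:gradfpullback}--\eqref{eq:HessfHessfhatExp}, and the remark after Definition~\ref{def:LipschitzHessRiemann} that smooth functions on compact manifolds have Lipschitz Hessian. Your explicit check that the domain of $\Exp$ is star-shaped (so the hypothesis of the proposition holds at every intermediate scale $ts_k$) is a detail the paper leaves implicit, and it is handled correctly.
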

We can now state our main result regarding the complexity of running Algorithm~\ref{algo:ARC} on a complete manifold with the exponential retraction, for the purpose of computing approximate first-order critical points. 
We require $\calM$ to be complete so that $\Exp$ is indeed a retraction, defined on the whole tangent bundle.
The proof follows~\citep[Thm.~2.5]{bcgmt} up to the fact that we bound the \emph{total number} of successful iterations that map to points with large gradient (as opposed to bounding the number of such iterations among the first $\bar k$), more in the spirit of~\citep{cartis2012complexity}: this enables us to make a statement about the limit of $\|\grad f(x_k)\|$.
Recall that $\varsigma_{\max}$ is provided by Lemma~\ref{lem:varsigmamax}.
\begin{theorem} \label{thm:masterboundSLipschitzHessian}
	Let $\calM$ be a complete Riemannian manifold and let $\Retr = \Exp$.
	Under~\aref{assu:lowerbound}, \aref{assu:firstorderregularityscalar} and \aref{assu:firstorderregularityvector}, for an arbitrary $x_0 \in \calM$, let $x_0, x_1, x_2\ldots$ be the iterates produced by Algorithm~\ref{algo:ARC}.
	For any $\varepsilon > 0$, the \emph{total number} of \emph{successful} iterations $k$ such that $\|\grad f(x_{k+1})\| > \varepsilon$ is bounded above by
	\begin{align*}
		K_1(\varepsilon) & \triangleq \frac{3(f(x_0) - \flow)}{\eta_1 \varsigma_{\min}} \left(\frac{L'}{2}+\theta+\varsigma_{\max}\right)^{1.5} \frac{1}{\varepsilon^{1.5}}.
	\end{align*}
	Furthermore, $\lim_{k \to \infty} \|\grad f(x_k)\| = 0$ (that is, limit points are critical).
\end{theorem}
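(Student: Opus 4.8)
The plan is to reproduce the Euclidean argument of \citep[Thm.~2.5]{bcgmt}, with parallel transport bridging the distinct tangent spaces. The engine is a per-iteration estimate showing that at every successful step $\|\grad f(x_{k+1})\|$ is controlled by $\|s_k\|^2$. First I would record that differentiating the cubic term in~\eqref{eq:mkExp} gives $\nabla m_k(s) = \grad f(x_k) + \Hess f(x_k)[s] + \varsigma_k\|s\|s$, hence $\nabla m_k(s_k) - \grad f(x_k) - \Hess f(x_k)[s_k] = \varsigma_k\|s_k\|s_k$. For a successful iteration $k$ we have $x_{k+1} = \Exp_{x_k}(s_k)$, and since $P_{s_k}$ is a linear isometry,
\[
	\|\grad f(x_{k+1})\| = \left\| P_{s_k}^{-1}\grad f(x_{k+1}) - \grad f(x_k) - \Hess f(x_k)[s_k] + \nabla m_k(s_k) - \varsigma_k\|s_k\|s_k \right\|.
\]
Applying the triangle inequality, then bounding the first group by~\aref{assu:firstorderregularityvector} (by $\tfrac{L'}{2}\|s_k\|^2$), the term $\|\nabla m_k(s_k)\|$ by the first-order progress condition~\eqref{eq:firstorderprogress} (by $\theta\|s_k\|^2$), and $\varsigma_k\|s_k\|^2$ by Lemma~\ref{lem:varsigmamax} (by $\varsigma_{\max}\|s_k\|^2$), yields $\|\grad f(x_{k+1})\| \leq \left(\tfrac{L'}{2}+\theta+\varsigma_{\max}\right)\|s_k\|^2$.

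Next I would run the counting argument. If a successful iteration $k$ satisfies $\|\grad f(x_{k+1})\| > \varepsilon$, then $\|s_k\|^3 > \big(\varepsilon / (\tfrac{L'}{2}+\theta+\varsigma_{\max})\big)^{3/2}$. Summing over the set $\calS_\varepsilon$ of all such iterations and bounding $\sum_{k\in\calS_\varepsilon}\|s_k\|^3 \leq \sum_{k\in\calS}\|s_k\|^3 \leq \frac{3(f(x_0)-\flow)}{\eta_1\varsigma_{\min}}$ via Proposition~\ref{prop:sumnormskcube} (enlarging the index set to all successful iterations only helps), the number of such iterations is at most $K_1(\varepsilon)$, as claimed.

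For the limit statement, note first that Proposition~\ref{prop:sumnormskcube} forces $\|s_k\| \to 0$ along successful iterations, so by the per-iteration bound $\|\grad f(x_{k+1})\| \to 0$ along them as well. To pass to the full sequence, I would observe that there must be infinitely many successful iterations---otherwise, for all sufficiently large $k$, $\varsigma_k$ would be multiplied by a factor at least $\gamma_2 > 1$, contradicting Lemma~\ref{lem:varsigmamax}---and that between two consecutive successful iterations the iterate $x_k$ is constant. Hence for large $k$ one has $x_k = x_{k_j+1}$ for some large successful index $k_j$, and $\lim_{k\to\infty}\|\grad f(x_k)\| = 0$ follows.

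The only genuinely delicate point, and thus the main obstacle, is this last bookkeeping: rejected steps leave $x_k$ unchanged yet still count as iterations, so the statement ``only finitely many successful iterations leave a large gradient at the \emph{next} point'' has to be converted carefully---using that the iterate is piecewise constant and that successful steps recur---into a statement about the entire sequence $\{x_k\}$. Everything else is a direct transcription of the Euclidean proof, with parallel transport absorbing the curvature.
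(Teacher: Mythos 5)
Your proposal is correct and follows essentially the same route as the paper: the same decomposition of $\nabla m_k(s_k)$ combined with \aref{assu:firstorderregularityvector}, the isometry of $P_{s_k}$ and Lemma~\ref{lem:varsigmamax} to get $\|\grad f(x_{k+1})\| \leq (\tfrac{L'}{2}+\theta+\varsigma_{\max})\|s_k\|^2$, followed by the counting argument via Proposition~\ref{prop:sumnormskcube}. Your limit argument (summability forces $\|s_k\|\to 0$ on successes, plus the observation that bounded $\varsigma_k$ forces infinitely many successes and iterates are constant in between) is a minor, equally valid variant of the paper's appendix argument, which instead uses finiteness of $\calS_\varepsilon$ for each $\varepsilon$ and the same geometric-growth-of-$\varsigma_k$ reasoning.
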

\begin{proof}
	If iteration $k$ is successful, we have $x_{k+1} = \Exp_{x_k}(s_k)$.
	The gradient of the model $m_k$~\eqref{eq:mkExp} at $s_k$ (in the tangent space at $x_k$) is given by
	\begin{align*}
	\nabla m_k(s_k) & = \grad f(x_k) + \Hess f(x_k)[s_k] + \varsigma_k \|s_k\| s_k \\
	& = P_{s_k}^{-1}\grad f(x_{k+1}) \\ & \quad + \left( \grad f(x_k) + \Hess f(x_k)[s_k] - P_{s_k}^{-1}\grad f(x_{k+1}) \right) + \varsigma_k \|s_k\| s_k,
	\end{align*}
	with $P_{s_k}$ as defined by~\eqref{eq:Ps}. 
	Owing to the first-order progress condition~\eqref{eq:firstorderprogress}, by the triangle inequality and also using~\aref{assu:firstorderregularityvector}, we find
	\begin{align*}
		\theta \|s_k\|^2 \geq \|\nabla m_k(s_k)\| \geq \|P_{s_k}^{-1}\grad f(x_{k+1})\| - \frac{L'}{2} \|s_k\|^2 - \varsigma_k \|s_k\|^2.
	\end{align*}
	Rearranging and using that $P_{s_k}$ is an isometry, we get for all successful $k$ that
	\begin{align}
		\|\grad f(x_{k+1})\| = \|P_{s_k}^{-1}\grad f(x_{k+1})\| \leq \left(\frac{L'}{2} + \theta + \varsigma_{\max}\right) \|s_k\|^2,
		\label{eq:ineqgradnormxkplusLipschitzHessian}
	\end{align}
	where we also called upon Lemma~\ref{lem:varsigmamax} to claim $\varsigma_k \leq \varsigma_{\max}$.
	Define a subset of the successful steps based on the tolerance $\varepsilon$:
	\begin{align*}
		\calS_\varepsilon & = \{ k : \rho_k \geq \eta_1 \textrm{ and } \|\grad f(x_{k+1})\| > \varepsilon \}.
	\end{align*}
	For $k \in \calS_\varepsilon$, we can lower-bound $\|s_k\|^3$ using~\eqref{eq:ineqgradnormxkplusLipschitzHessian} since $\|\grad f(x_{k+1})\| > \varepsilon$. Then, calling upon Proposition~\ref{prop:sumnormskcube}, we find
	\begin{align*}
		\frac{3(f(x_0) - \flow)}{\eta_1 \varsigma_{\min}} & \geq \sum_{k \in \calS_\varepsilon} \|s_k\|^3 \geq  \frac{\varepsilon^{1.5}}{\left(\frac{L'}{2} + \theta + \varsigma_{\max}\right)^{1.5}} |\calS_\varepsilon|.
	\end{align*}
	This proves the main claim. The claim regarding limit points is proved in Appendix~\ref{app:firstorderexp}.
\end{proof}
(Above, it is natural to consider the sequence $\{x_{k+1}\}$ for successful iterations $k$, as this enumerates each distinct point in the whole sequence once.) 
A key consequence of Theorem~\ref{thm:masterboundSLipschitzHessian} is that, if the number of \emph{successful} iterations among $0, \ldots, \bar k-1$ strictly exceeds $K_1(\varepsilon)$, then it must be that $\|\grad f(x_k)\| \leq \varepsilon$ for some $k$ in $0, \ldots, \bar k$. Combining this with Lemma~\ref{lem:boundKsuccessfulsteps} yields the first main result: a bound on the total number of iterations it may take ARC to produce an approximate critical point on a complete manifold, using the exponential map, and (essentially) assuming a Lipschitz continuous Hessian.
%
%
%
%
%
%
%
%
%
%
%
%
\begin{corollary} \label{cor:mastercorollary}
	Under the assumptions of Theorem~\ref{thm:masterboundSLipschitzHessian}, Algorithm~\ref{algo:ARC} produces a point $x_k \in \calM$ such that $f(x_k) \leq f(x_0)$ and $\|\grad f(x_k)\| \leq \varepsilon$ in at most
	\begin{align*}
		\left(1 + \frac{|\log(\gamma_1)|}{\log(\gamma_2)}\right) K_1(\varepsilon) + \frac{1}{\log(\gamma_2)} \log\left(\frac{\varsigma_{\max}}{\varsigma_0}\right) + 1
	\end{align*}
	iterations.
\end{corollary}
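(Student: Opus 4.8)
The plan is to argue by contraposition, combining the counting bound of Theorem~\ref{thm:masterboundSLipschitzHessian} on successful iterations with the successful-to-total ratio of Lemma~\ref{lem:boundKsuccessfulsteps}. First, the monotonicity claim $f(x_k) \le f(x_0)$ is immediate and holds for \emph{every} iterate: at a successful iteration the first-order progress condition~\eqref{eq:firstorderprogress} together with $\rho_k \ge \eta_1$ forces $f(x_{k+1}) \le f(x_k)$ (this is the first display in the proof of Proposition~\ref{prop:sumnormskcube}), and at an unsuccessful iteration $x_{k+1} = x_k$; hence the cost is nonincreasing along the whole sequence. It therefore suffices to exhibit an index $k$ not exceeding the claimed bound with $\|\grad f(x_k)\| \le \varepsilon$, as that same $k$ automatically satisfies $f(x_k)\le f(x_0)$.

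Next, let $\bar k$ denote the integer appearing in the statement, and suppose for contradiction that $\|\grad f(x_j)\| > \varepsilon$ for every $j \in \{0, 1, \ldots, \bar k\}$. If the subproblem solver ever returns $s_k = 0$ for some $k \le \bar k$, then Lemma~\ref{lem:termination} gives $\grad f(x_k) = 0$, contradicting the supposition; so the algorithm runs (at least) through iteration $\bar k - 1$. Now every successful iteration $k \in \{0, \ldots, \bar k - 1\}$ produces $x_{k+1}$ with index $k+1 \le \bar k$, hence $\|\grad f(x_{k+1})\| > \varepsilon$; so every such iteration lies in the set $\calS_\varepsilon$ counted in the proof of Theorem~\ref{thm:masterboundSLipschitzHessian}, whose cardinality is bounded there by $K_1(\varepsilon)$. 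Consequently the number $K$ of successful iterations among $0, \ldots, \bar k - 1$ satisfies $K \le K_1(\varepsilon)$.

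Feeding this into Lemma~\ref{lem:boundKsuccessfulsteps}, with $\varsigma_{\max}$ supplied by Lemma~\ref{lem:varsigmamax}, gives
\[
	\bar k \le \left(1 + \frac{|\log(\gamma_1)|}{\log(\gamma_2)}\right) K + \frac{1}{\log(\gamma_2)} \log\!\left(\frac{\varsigma_{\max}}{\varsigma_0}\right) \le \left(1 + \frac{|\log(\gamma_1)|}{\log(\gamma_2)}\right) K_1(\varepsilon) + \frac{1}{\log(\gamma_2)} \log\!\left(\frac{\varsigma_{\max}}{\varsigma_0}\right),
\]
and the right-hand side is strictly smaller than $\bar k$ precisely because of the trailing ``$+1$'' in the stated bound (which also absorbs the rounding up to an integer). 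This contradiction shows that some $k \le \bar k$ has $\|\grad f(x_k)\| \le \varepsilon$, and the first paragraph finishes the proof.

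The argument is essentially bookkeeping, so I do not expect a genuine obstacle; the only points needing care are (i) making sure the iteration index $\bar k$ is a bona fide integer strictly exceeding the real-valued right-hand side of Lemma~\ref{lem:boundKsuccessfulsteps} evaluated at $K = K_1(\varepsilon)$ — which is exactly the role of the extra unit — and (ii) disposing of the early-termination case $s_k = 0$ via Lemma~\ref{lem:termination}. No new estimates on $f$, on the retraction, or on the geometry of $\calM$ are required beyond what Theorem~\ref{thm:masterboundSLipschitzHessian} already delivers.
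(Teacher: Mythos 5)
Your proof is correct and follows exactly the route the paper intends (the paper only sketches it in the paragraph preceding the corollary): if all iterates through $\bar k$ had gradient norm above $\varepsilon$, every successful iteration among $0,\ldots,\bar k-1$ would lie in $\calS_\varepsilon$, so Theorem~\ref{thm:masterboundSLipschitzHessian} caps their number by $K_1(\varepsilon)$ and Lemma~\ref{lem:boundKsuccessfulsteps} then contradicts the size of $\bar k$. Your added care about the $s_k=0$ termination case and the integrality of $\bar k$ is sound and only makes the bookkeeping more explicit.
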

In the Euclidean case, this recovers the result of~\citep{bcgmt} exactly. Note also that this complexity result is unaffected by the curvature of the manifold. Moreover, if $L$ is known and $L' = L$ (which holds under the Lipschitz Hessian assumption), then we can set $\varsigma_0 = \varsigma_{\min} = \frac{1}{2}L$ (so that $\varsigma_{\max} = \frac{\gamma_3}{2(1-\eta_2)}L$) and $\theta = \frac{1}{2}L$. With those choices, we find that
\begin{align}
	K_1(\varepsilon) & = \frac{6}{\eta_1} \left( 1 + \frac{\gamma_3}{2(1-\eta_2)} \right)^{1.5} (f(x_0) - \flow) \sqrt{L} \frac{1}{\varepsilon^{1.5}}.
\end{align}
This exhibits a complexity scaling with $\sqrt{L}$ when $L$ is known, as in~\citep{nesterov2006cubic} and in the lower bound discussed in~\citep{carmon2017lower}.


\section{First-order analysis with a general retraction}
\label{sec:firstorderanalysisgeneral}

The results of the previous section provide a strict, lossless generalization of a known result in the Euclidean case. However, we note two practical shortcomings:
\begin{itemize}
	\item[--] For the analysis to apply, the algorithm must compute the exponential map. 
	\item[--] The Lipschitz condition (Definition~\ref{def:LipschitzHessRiemann}) may be difficult to assess as it involves parallel transports or bounding the covariant derivative of the Riemannian Hessian.
\end{itemize}
Regarding the first point, we organized proofs in Appendix~\ref{app:firstorderexp} to highlight why it is not clear how to generalize Proposition~\ref{prop:LipschitzHessianBounds} to general retractions. In a nutshell, it is because parallel transports and geodesics interact particularly nicely through the fact that the velocity vector field of a geodesic is a parallel vector field.

To address both points, we propose alternate regularity conditions which (a) allow for any retraction, and (b) involve conceptually simpler objects. We do this by focusing on the pullbacks $\hat f_x = f \circ \Retr_x$, which have the merit of being scalar functions on linear spaces---this is in the spirit of prior work~\citep{boumal2016globalrates}. We offer justification for these assumptions below, and in section~\ref{sec:regularity}.

The first regularity assumption, \aref{assu:firstorderregularityscalar}, is readily phrased in terms of the pullback. We focus on providing a replacement for the second condition: \aref{assu:firstorderregularityvector}. Translating this condition to pullbacks by analogy, we aim to bound the difference between $\nabla \hat f_x(s)$---which, conveniently, is a vector tangent at $x$---and a classical truncated Taylor expansion for it: $\nabla \hat f_x(0) + \nabla^2 \hat f_x(0)[s]$. In so doing, it is useful to note that $\nabla \hat f_x(s)$ is related to $\grad f(\Retr_x(s))$ by a linear operator, as follows:
\begin{align}
	\nabla \hat f_x(s) & = T_s^* \grad f(\Retr_x(s)), & \textrm{ with } & & T_s & = \D\Retr_x(s) \colon \T_x\calM \to \T_{\Retr_x(s)}\calM,
	\label{eq:nablahatfTgradf}
\end{align}
where the star indicates the adjoint with respect to the Riemannian metric. Indeed,
\begin{align}
	\forall s, \dot s \in \T_x\calM, \qquad \innersmall{\nabla \hat f_x(s)}{\dot s}_x & = \D \hat f_x(s)[\dot s] \nonumber\\
		& = \D f(\Retr_{x}(s))[ \D\Retr_{x}(s)[\dot s] ] \nonumber\\
		& = \inner{\grad f(\Retr_{x}(s))}{\D\Retr_{x}(s)[\dot s]}_{\Retr_{x}(s)} \nonumber\\
		& = \inner{\left(\D\Retr_{x}(s)\right)^*[\grad f(\Retr_{x}(s))]}{\dot s}_x.
		\label{eq:DRetrnablagrad}
\end{align}
(This also plays a role in~\citep[p599]{ring2012optimization}.)

Considering for a moment how an estimate for $\nabla \hat f_x(s)$ might look like if we use the exponential retraction and under the Lipschitz continuous Hessian assumption~\aref{assu:firstorderregularityvector} as above, we find by triangular inequality that
\begin{align*}
	\|\nabla \hat f_x(s) - \nabla \hat f_x(0) - \nabla^2 \hat f_x(0)[s]\| &
	\leq \|\nabla \hat f_x(s) - P_s^{-1} \grad f(\Exp_x(s))\| \\ & \quad + \|P_s^{-1} \grad f(\Exp_x(s)) - \grad f(x) - \Hess f(x)[s]\| \\
	& \leq \opnorm{T_s^* - P_s^{-1}} \|\grad f(\Exp_x(s))\| + \frac{L'}{2} \|s\|^2,
\end{align*}
using~\eqref{eq:DRetrnablagrad} with $T_s = \D\Exp_x(s)$.
Since parallel transport $P_s$~\eqref{eq:Ps} is an isometry, $P_s^{-1} = P_s^*$ and $\opnorm{T_s^* - P_s^{-1}} = \opnorm{T_s - P_s}$.
For small $s$, we expect $T_s$ (the differential of the exponential map) and $P_s$ (parallel transport) to be nearly the same. Indeed, $\opnorm{T_s - P_s}$ is a continuous function of $s$ and $T_0 = P_0 = \Id$. How much they differ for nonzero $s$ is related to the curvature of the manifold. As a result, we conclude that
\begin{align}
	\left\| \nabla \hat f_x(s) - \nabla \hat f_x(0) - \nabla^2 \hat f_x(0)[s] \right\| & \leq \frac{L'}{2} \|s\|^2 + q(\|s\|) \cdot \|\grad f(\Exp_x(s))\|
	\label{eq:ineqnablahatfsmotivation}
\end{align}
for some continuous function $q \colon \reals^+ \to \reals^+$ such that $q(0) = 0$. 

As an illustration, consider the special case where $\calM$ has constant sectional curvature $C$. In this case, it can be shown using Jacobi fields (see the proof of Lemma~\ref{lemma:exponentialbound} in the appendix) that
\begin{align}
	T_s \dot s & = \D \Exp_x(s)[\dot s] = P_s \dot s + h(\|s\|) P_s\!\left(\dot s - \frac{\inner{s}{\dot s}}{\|s\|^2} s\right),
	\label{eq:DExpConstantCurvature}
\end{align}
where
\begin{align*}
	h(\|s\|) & = \begin{cases}
			0 & \textrm{ if } C = 0, \\
			\frac{\sin(\|s\|/R)}{\|s\|/R} - 1 & \textrm{ if } C = \frac{1}{R^2} > 0, \\
			\frac{\sinh(\|s\|/R)}{\|s\|/R} - 1 & \textrm{ if } C = -\frac{1}{R^2} < 0.
		\end{cases}
\end{align*}
Thus, for manifolds with constant sectional curvature, inequality~\eqref{eq:ineqnablahatfsmotivation} holds with $q(\|s\|) = |h(\|s\|)|$, independent of $x$. This function behaves as $\frac{1}{6}\!\left(\frac{\|s\|}{R}\right)^2 = \frac{C}{6} \|s\|^2$ for small $\|s\|$. This derivation generalizes to manifolds with sectional curvature bounded both from above and from below~\citep{tripuraneni2018averaging}, \citep[Thm.~A.2.9]{waldmann2012geometric}.

Returning to retractions in general, the above motivates us to introduce the following assumption on the pullbacks, meant to replace \aref{assu:firstorderregularityvector}.
\begin{assumption} \label{assu:firstorderregularityvectorretraction}
	There exists a constant $L'$ such that, at each successful iteration $k$, for the step $s_k$ selected by the subproblem solver, the pullback $\hat f_k = f \circ \Retr_{x_k}$ obeys
	\begin{align}
		\left\| \nabla \hat f_k(s_k) - \nabla \hat f_k(0) - \nabla^2 \hat f_k(0)[s_k] \right\| & \leq \frac{L'}{2} \|s_k\|^2 + q(\|s_k\|) \|\grad f(\Retr_{x_k}(s_k))\|,
	\end{align}
	where $q \colon \reals^+ \to \reals^+$ is some continuous function satisfying $q(0) = 0$.
\end{assumption}
Notice how this assumption involves simple tools compared to~\aref{assu:firstorderregularityvector}, which relies on the exponential map and parallel transports. Furthermore, if we strengthen the condition by forcing $q \equiv 0$, we get a Lipschitz-type condition on the pullback: see Section~\ref{sec:regularity}.

Looking at the proof of Theorem~\ref{thm:masterboundSLipschitzHessian}, specifically equation~\eqref{eq:ineqgradnormxkplusLipschitzHessian}, we anticipate the need to lower-bound the norm of $\nabla \hat f_k(s_k)$. Owing to~\eqref{eq:nablahatfTgradf}, it holds that
\begin{align}
	\|\nabla \hat f_k(s)\| \geq \sigmamin\!\left(\D\Retr_{x_k}(s)\right) \|\grad f(\Retr_{x_k}(s))\|,
	\label{eq:nablahatfgradf}
\end{align}
where $\sigmamin$ extracts the smallest singular value of an operator.
For our purpose, it is important that this least singular value remains 
bounded away from zero. This is only a concern for small steps (as large successful steps provide sufficient improvement for other reasons.) Providentially, for $s = 0$, Definition~\ref{def:retraction} ensures $\sigmamin(\D\Retr_{x_k}(0)) = 1$, so that by continuity we expect that it should be possible to meet this requirement. We summarize this discussion in the following assumption.
\begin{assumption} \label{assu:DRetr}
	There exist constants $a > 0$ and $b > 0$ such that, at each successful iteration $k$,
	\begin{align}
		\textrm{if } \|s_k\| \leq a, \textrm{ then } \sigmamin(\D\Retr_{x_k}(s_k)) & \geq b.
	\end{align}
	(The constant $a$ is allowed to be $+\infty$, while $b$ is necessarily at most 1.)
\end{assumption}
In the Euclidean case with $\Retr_x(s) = x+s$, $\D\Retr_x(s)$ is an isometry and one can set $a = +\infty$ and $b = 1$. We secure~\aref{assu:DRetr} in Section~\ref{sec:Dretr} for a large family of manifolds and retractions.

With these new assumptions, we can adapt Theorem~\ref{thm:masterboundSLipschitzHessian} to general retractions. The main change in the proof consists in treating short and long steps separately. This induces a condition that $\varepsilon$ must be small enough for the rate $O(1/\varepsilon^{1.5})$ to materialize. We stress that it is not necessary to know $L, L', q, a$ and $b$ as they appear in \aref{assu:firstorderregularityscalar}, \aref{assu:firstorderregularityvectorretraction} and \aref{assu:DRetr} to run Algorithm~\ref{algo:ARC} in practice: they are only used for the analysis.
Recall that $\varsigma_{\max}$ is provided by Lemma~\ref{lem:varsigmamax}.
\begin{theorem} \label{thm:masterboundSLipschitzHessianretraction}
	Let $\calM$ be a Riemannian manifold equipped with a retraction $\Retr$.
	Under~\aref{assu:lowerbound}, \aref{assu:firstorderregularityscalar}, \aref{assu:firstorderregularityvectorretraction} and \aref{assu:DRetr}, for an arbitrary $x_0 \in \calM$, let $x_0, x_1, x_2\ldots$ be the iterates produced by Algorithm~\ref{algo:ARC}.
	For any $\varepsilon > 0$, the \emph{total number} of \emph{successful} iterations $k$ such that $\|\grad f(x_{k+1})\| > \varepsilon$ is bounded above by
	\begin{align*}
		K_1(\varepsilon) & \triangleq \frac{3(f(x_0) - \flow)}{\eta_1 \varsigma_{\min}} \max\left( \left( \frac{ \frac{L'}{2} + \theta + \varsigma_{\max} }{ b - q(r) } \right)^{1.5} \frac{1}{\varepsilon^{1.5}}, \frac{1}{r^3} \right)
	\end{align*}
	for any $r \in (0, a]$ such that $q(r) < b$.
	Furthermore, $\lim_{k \to \infty} \|\grad f(x_k)\| = 0$.
\end{theorem}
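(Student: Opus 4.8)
The plan is to mimic the proof of Theorem~\ref{thm:masterboundSLipschitzHessian}, with the crucial modification that short and long successful steps are handled by different arguments; the dichotomy threshold is the parameter $r \in (0, a]$ with $q(r) < b$. Fix such an $r$. For a successful iteration $k$, the gradient of the model $m_k$~\eqref{eq:mk} at $s_k$ is $\nabla m_k(s_k) = \nabla \hat f_k(0) + \nabla^2 \hat f_k(0)[s_k] + \varsigma_k \|s_k\| s_k$, so writing $\nabla \hat f_k(s_k)$ as the sum of $\nabla m_k(s_k)$, the Taylor remainder $\nabla \hat f_k(s_k) - \nabla \hat f_k(0) - \nabla^2 \hat f_k(0)[s_k]$, and $-\varsigma_k \|s_k\| s_k$, the triangle inequality together with the first-order progress condition~\eqref{eq:firstorderprogress}, Lemma~\ref{lem:varsigmamax}, and \aref{assu:firstorderregularityvectorretraction} gives
\begin{align*}
	\|\nabla \hat f_k(s_k)\| \leq \left( \frac{L'}{2} + \theta + \varsigma_{\max} \right) \|s_k\|^2 + q(\|s_k\|) \|\grad f(\Retr_{x_k}(s_k))\|.
\end{align*}

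Now I would split on whether $\|s_k\| \leq r$. In the \emph{short-step} case $\|s_k\| \leq r$, \aref{assu:DRetr} (using $r \leq a$) gives $\sigmamin(\D\Retr_{x_k}(s_k)) \geq b$, so by~\eqref{eq:nablahatfgradf} we have $\|\nabla \hat f_k(s_k)\| \geq b \|\grad f(x_{k+1})\|$ where $x_{k+1} = \Retr_{x_k}(s_k)$; also $q(\|s_k\|) \leq q(r)$ since... actually $q$ need not be monotone, so I would instead carry $q(\|s_k\|)$ and note that what matters is the inequality $q(\|s_k\|) \|\grad f(x_{k+1})\| \leq q(r)\|\grad f(x_{k+1})\|$ only holds if $q$ is monotone on $[0,r]$; the clean route is to absorb: combining $b\|\grad f(x_{k+1})\| \leq \|\nabla\hat f_k(s_k)\| \leq (\tfrac{L'}{2}+\theta+\varsigma_{\max})\|s_k\|^2 + q(\|s_k\|)\|\grad f(x_{k+1})\|$ and using that the coefficient on the right is $q(\|s_k\|) \le \sup_{[0,r]} q =: \bar q(r)$ with $\bar q(r) < b$ for $r$ small (one should phrase \aref{assu:firstorderregularityvectorretraction}'s constant via $\bar q$, or simply assume WLOG $q$ nondecreasing — the paper's statement writes $q(r)$, so I will take $q$ to be the least nondecreasing majorant or assume monotonicity without loss), yields $\|\grad f(x_{k+1})\| \leq \frac{ \frac{L'}{2} + \theta + \varsigma_{\max} }{ b - q(r) } \|s_k\|^2$. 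Hence if additionally $\|\grad f(x_{k+1})\| > \varepsilon$, then $\|s_k\|^3 \geq \varepsilon^{1.5} \big/ \big( \frac{ \frac{L'}{2} + \theta + \varsigma_{\max} }{ b - q(r) } \big)^{1.5}$. In the \emph{long-step} case $\|s_k\| > r$, trivially $\|s_k\|^3 > r^3$. Either way, every successful iteration $k$ with $\|\grad f(x_{k+1})\| > \varepsilon$ satisfies $\|s_k\|^3 \geq \min\!\big( \varepsilon^{1.5} \big/ \big( \frac{ \frac{L'}{2} + \theta + \varsigma_{\max} }{ b - q(r) } \big)^{1.5},\, r^3 \big)$.

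Summing over the set $\calS_\varepsilon = \{ k : \rho_k \geq \eta_1,\ \|\grad f(x_{k+1})\| > \varepsilon \}$ and invoking Proposition~\ref{prop:sumnormskcube} gives
\begin{align*}
	\frac{3(f(x_0) - \flow)}{\eta_1 \varsigma_{\min}} \geq \sum_{k \in \calS_\varepsilon} \|s_k\|^3 \geq |\calS_\varepsilon| \cdot \min\!\left( \frac{\varepsilon^{1.5}}{ \left( \frac{ \frac{L'}{2} + \theta + \varsigma_{\max} }{ b - q(r) } \right)^{1.5} },\ r^3 \right),
\end{align*}
and rearranging yields $|\calS_\varepsilon| \leq K_1(\varepsilon)$ as stated (the $\max$ in $K_1$ is the reciprocal of the $\min$ above, up to the common prefactor). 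For the limit statement, I would argue as in Appendix~\ref{app:firstorderexp}: by Lemma~\ref{lem:boundKsuccessfulsteps} there are infinitely many successful iterations, and since $|\calS_\varepsilon| < \infty$ for every $\varepsilon > 0$, all but finitely many successful $k$ have $\|\grad f(x_{k+1})\| \leq \varepsilon$; combined with the fact that the iterate only changes on successful steps, $\liminf$ and indeed $\lim$ of $\|\grad f(x_k)\|$ is $0$ — the passage from the subsequence along successful steps to the full sequence uses that on unsuccessful steps $x_{k+1} = x_k$.

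The main obstacle, and the genuinely new ingredient compared to the exponential-map case, is the short-step analysis: one must relate $\|\nabla \hat f_k(s_k)\|$ (the pullback gradient, a vector at $x_k$) to $\|\grad f(x_{k+1})\|$ (the true gradient at the new point) via the differential of the retraction, which is exactly why \aref{assu:DRetr} is needed, and one must ensure the factor $b - q(r)$ is positive, which forces the hypothesis $q(r) < b$ and is why only \emph{sufficiently small} $\varepsilon$ gives the clean $O(1/\varepsilon^{1.5})$ rate — for large $\varepsilon$ the $1/r^3$ branch of the $\max$ can dominate. A secondary subtlety is the non-monotonicity of $q$; the honest fix is either to assume $q$ nondecreasing (harmless, since one may always replace $q$ by its nondecreasing majorant, still continuous and vanishing at $0$) or to phrase the bound with $\sup_{[0,r]}q$ in place of $q(r)$.
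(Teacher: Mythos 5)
Your proof is correct and follows essentially the same route as the paper's: decompose $\nabla m_k(s_k)$, combine the progress condition~\eqref{eq:firstorderprogress} with \aref{assu:firstorderregularityvectorretraction}, \aref{assu:DRetr} and~\eqref{eq:nablahatfgradf}, split successful steps into short ($\|s_k\|\le r$) and long ones, and invoke Proposition~\ref{prop:sumnormskcube}. Your remark about the non-monotonicity of $q$ is a fair point that the paper itself glosses over when it writes $\sigmamin(T_{s_k}) - q(\|s_k\|) \ge b - q(r)$; replacing $q$ by its nondecreasing majorant, as you suggest, is the harmless fix.
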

\begin{proof}
	If iteration $k$ is successful, we have $x_{k+1} = \Retr_{x_k}(s_k)$.
	The gradient of the model $m_k$~\eqref{eq:mk} at $s_k$ is given by
	\begin{align*}
		\nabla m_k(s_k) & = \nabla \hat f_k(0) + \nabla^2 \hat f_k(0)[s_k] + \varsigma_k \|s_k\| s_k \\
						& = \nabla \hat f_k(s_k) + \left( \nabla \hat f_k(0) + \nabla^2 \hat f_k(0)[s_k] - \nabla \hat f_k(s_k) \right) + \varsigma_k \|s_k\| s_k.
	\end{align*}
	Owing to the first-order progress condition~\eqref{eq:firstorderprogress}, using~\aref{assu:firstorderregularityvectorretraction} and~\eqref{eq:nablahatfgradf} with $T_{s_k} = \D\Retr_{x_k}(s_k)$,
	\begin{multline*}
		\theta \|s_k\|^2 \geq \|\nabla m_k(s_k)\| \geq \sigmamin(T_{s_k}) \|\grad f(x_{k+1})\| \\ - \frac{L'}{2} \|s_k\|^2 - q(\|s_k\|) \cdot \|\grad f(x_{k+1})\| - \varsigma_k \|s_k\|^2.
	\end{multline*}
	Rearranging and calling upon Lemma~\ref{lem:varsigmamax}, we get for all successful iterations $k$ that
	\begin{align}
		\big( \sigmamin(T_{s_k}) - q(\|s_k\|) \big) \cdot \|\grad f(x_{k+1})\| \leq \left(\frac{L'}{2} + \theta + \varsigma_{\max}\right) \|s_k\|^2.
	\label{eq:ineqgradnormxkplus}
	\end{align}	
	If $k$ is a successful step and $\|s_k\| \leq a$, then~\aref{assu:DRetr} guarantees $\sigmamin(T_{s_k}) \geq b > 0$. Additionally, since $q$ is continuous and satisfies $q(0) = 0$ there necessarily exists $r \in (0, a]$ such that $q(r) < b$. This motivates the following. Recall this subset of the successful steps: 
	\begin{align*}
	\calS_\varepsilon & = \{ k : \rho_k \geq \eta_1 \textrm{ and } \|\grad f(x_{k+1})\| > \varepsilon \}.
	\end{align*}
	Further partition this subset in two, based on step length: \emph{short} steps in $\calS_\short$ and \emph{long} steps in $\calS_\longg$. The partition is based on $r$ as constructed above:
	\begin{align*}
	\calS_\short & = \{ k \in \calS_\varepsilon : \|s_k\| \leq r \}, & \textrm{ and } &  & \calS_\longg & = \calS_\varepsilon \backslash \calS_\short.
	\end{align*}
	
	For $k \in \calS_\short$, we can lower-bound $\|s_k\|^3$ using~\eqref{eq:ineqgradnormxkplus} since $\sigmamin(T_{s_k}) - q(\|s_k\|) \geq b - q(r) > 0$. For $k \in \calS_\longg$, we have $\|s_k\|^3 > r^3$ by definition. Then, calling upon Proposition~\ref{prop:sumnormskcube}, we find
	\begin{align*}
		\frac{3(f(x_0) - \flow)}{\eta_1 \varsigma_{\min}} & \geq \sum_{k \in \calS_\short} \|s_k\|^3 + \sum_{k \in \calS_\longg} \|s_k\|^3 \\ & \geq \frac{ (b - q(r))^{1.5} \varepsilon^{1.5}}{\left(\frac{L'}{2} + \theta + \varsigma_{\max}\right)^{1.5}} |\calS_\short| + r^3 |\calS_\longg| \\
					& \geq \min\left( \left( \frac{ (b - q(r)) \varepsilon }{\frac{L'}{2} + \theta + \varsigma_{\max}} \right)^{1.5}, r^3 \right) |\calS_\varepsilon|.
	\end{align*}
	This proves the main claim. For limit points, see the matching argument in Theorem~\ref{thm:masterboundSLipschitzHessian}.
\end{proof}
A corollary identical to Corollary~\ref{cor:mastercorollary} holds for Theorem~\ref{thm:masterboundSLipschitzHessianretraction} as well.
In the Euclidean case with $\Retr_x(s) = x + s$ and a Lipschitz continuous Hessian, we can set $a = +\infty$, $b = 1$, $q \equiv 0$ and $r = +\infty$, thus also recovering the result of~\citep{bcgmt} exactly.

\TODOF{
\begin{remark}
	\TODO{I tried to find a short-but-correct way to state this, but it didn't quite work out. @Cora, can you suggest something?}
	\TODO{A4 is not needed for *convergence* of ARC (not complexity), even convergence to second order critical points.
		This is actually an important - but not straightforward- comment that we should make in the paper as well. In particular, if we only require say Cauchy decrease for first order criticality (instead of A3) we could prove that grad goes to zero and all limit points are critical. See ARC Part I paper Section 2. It is very similar to Trust region Cauchy-based approaches and so I think that the machinery in the TR complexity on manifold paper would translate here easily.The second-order criticality would stay similar to current ARC approach and it does not require A4.}
\end{remark}
}

\section{Second-order analysis} \label{sec:secondorder}
 
The two previous sections show how to meet first-order necessary optimality conditions approximately. To further satisfy second-order necessary optimality conditions approximately, we also
require second-order progress in the subproblem solver, through condition~\eqref{eq:secondorderprogress}.

This condition is similar to one proposed by~\citet{cartis2017improved} for the same purpose in the Euclidean case.
A direct extension of the proof in that reference would involve the Hessian of the pullback at the trial step $s_k$ rather than at the origin. 
As we have seen for gradients, this leads to technical difficulties.
We provide a proof that achieves the same complexity bound while avoiding such issues.
As a result, there is no need to distinguish between the exponential and the general retraction cases for second-order analysis.

We have the following bound on the total number of successful iterations which can produce points where the Hessian is far from positive semidefinite, akin to Theorems~\ref{thm:masterboundSLipschitzHessian} and~\ref{thm:masterboundSLipschitzHessianretraction}.
\begin{theorem} \label{thm:masterboundSsecond}
	Under~\aref{assu:lowerbound} and~\aref{assu:firstorderregularityscalar}, for an arbitrary $x_0\in\calM$, let $x_0, x_1, x_2\ldots$ be the iterates produced by Algorithm~\ref{algo:ARC} with second-order progress~\eqref{eq:secondorderprogress} enforced. For any $\varepsilon > 0$, the \emph{total number} of \emph{successful} iterations $k$ such that $\lambdamin(\nabla^2 \hat f_k(0)) < -\varepsilon$ is bounded above by
	\begin{align*}
		K_2(\varepsilon) & \triangleq \frac{3(f(x_0) - \flow)}{\eta_1 \varsigma_{\min}} (\theta + 2\varsigma_{\max})^3 \frac{1}{\varepsilon^3}.
	\end{align*}
	Furthermore, $\liminf_{k \to \infty} \lambdamin(\nabla^2 \hat f_k(0)) \geq 0$.
\end{theorem}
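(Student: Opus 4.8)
The plan is to mirror the structure of the proof of Theorem~\ref{thm:masterboundSLipschitzHessian} (and its retraction analogue), but working with the Hessian of the model at the trial step rather than the gradient. The starting point is the second-order progress condition~\eqref{eq:secondorderprogress}, which controls $\lambdamin(\nabla^2 m_k(s_k))$. Since the model $m_k$ is a cubically-regularized quadratic, its Hessian at $s_k$ is
\begin{align*}
	\nabla^2 m_k(s_k) & = \nabla^2 \hat f_k(0) + \varsigma_k \|s_k\| \Id + \varsigma_k \frac{s_k s_k^\top}{\|s_k\|},
\end{align*}
so the regularization terms perturb $\nabla^2 \hat f_k(0)$ by a positive semidefinite operator of operator norm at most $2\varsigma_k \|s_k\|$. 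Therefore, for any unit vector, and in particular for an eigenvector realizing $\lambdamin(\nabla^2 \hat f_k(0))$, one gets $\lambdamin(\nabla^2 \hat f_k(0)) \geq \lambdamin(\nabla^2 m_k(s_k)) - 2\varsigma_k\|s_k\|$. Combining with~\eqref{eq:secondorderprogress} and Lemma~\ref{lem:varsigmamax} ($\varsigma_k \leq \varsigma_{\max}$) yields, for every successful iteration $k$,
\begin{align*}
	\lambdamin(\nabla^2 \hat f_k(0)) & \geq -\theta \|s_k\| - 2\varsigma_{\max}\|s_k\| = -(\theta + 2\varsigma_{\max})\|s_k\|.
\end{align*}
Equivalently, if $\lambdamin(\nabla^2 \hat f_k(0)) < -\varepsilon$, then $\|s_k\| > \varepsilon/(\theta + 2\varsigma_{\max})$, i.e.\ such steps cannot be too short.

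The rest is the same counting argument as before. Define $\calS_\varepsilon' = \{k : \rho_k \geq \eta_1 \text{ and } \lambdamin(\nabla^2 \hat f_k(0)) < -\varepsilon\}$; for each $k \in \calS_\varepsilon'$ we have the cube lower bound $\|s_k\|^3 > \varepsilon^3/(\theta + 2\varsigma_{\max})^3$. Summing over $k \in \calS_\varepsilon'$ and invoking Proposition~\ref{prop:sumnormskcube} (which only needs \aref{assu:lowerbound}) gives
\begin{align*}
	\frac{3(f(x_0) - \flow)}{\eta_1 \varsigma_{\min}} & \geq \sum_{k \in \calS_\varepsilon'} \|s_k\|^3 > \frac{\varepsilon^3}{(\theta + 2\varsigma_{\max})^3} |\calS_\varepsilon'|,
\end{align*}
which rearranges to $|\calS_\varepsilon'| \leq K_2(\varepsilon)$, proving the main bound. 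Note that \aref{assu:firstorderregularityscalar} is needed only to invoke Lemma~\ref{lem:varsigmamax} (to get $\varsigma_{\max}$ finite); no assumption on the regularity of the gradient or on the retraction differential is required here, which is why the exponential and general retraction cases need not be distinguished.

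For the $\liminf$ statement, I would argue by contradiction: if $\liminf_k \lambdamin(\nabla^2 \hat f_k(0)) < 0$, pick $\varepsilon > 0$ with $\lambdamin(\nabla^2 \hat f_k(0)) < -\varepsilon$ for infinitely many $k$. Among these, only finitely many can be unsuccessful in a way that matters — more carefully, one needs to handle the fact that the index in $\hat f_k(0)$ at unsuccessful iterations repeats the same base point. The cleanest route is the one already used implicitly in Theorem~\ref{thm:masterboundSLipschitzHessian}: the bound $|\calS_\varepsilon'| \leq K_2(\varepsilon) < \infty$ shows only finitely many \emph{successful} iterations violate the $-\varepsilon$ threshold, so the base points $x_k$ with $\lambdamin(\Hess f(x_k)) < -\varepsilon$ form a finite set (each such $x_k$ arises as $x_{j+1}$ for a successful $j$, or is the unchanging iterate over a block of unsuccessful steps following such a $j$, or is $x_0$); letting $\varepsilon \to 0$ along a sequence then gives $\liminf_k \lambdamin(\Hess f(x_k)) \geq 0$, and since $\nabla^2\hat f_k(0) = \Hess f(x_k)$ holds for the exponential retraction and more generally the quantity $\lambdamin(\nabla^2\hat f_k(0))$ is what the algorithm actually controls, the stated conclusion follows. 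The one point requiring a little care is this bookkeeping between the sequence $\{x_k\}$ and the subsequence $\{x_{k+1} : k \text{ successful}\}$ together with stalls at unsuccessful iterations; I expect this — rather than any inequality — to be the only mild obstacle, and it is resolved exactly as in the first-order proofs and deferred, as there, to the appendix.
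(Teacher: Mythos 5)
Your proposal is correct and follows essentially the same route as the paper's proof: the same Weyl-type bound $\lambdamin(\nabla^2 \hat f_k(0)) \geq \lambdamin(\nabla^2 m_k(s_k)) - 2\varsigma_k\|s_k\|$ obtained from the positive semidefinite cubic-regularization perturbation, combined with condition~\eqref{eq:secondorderprogress}, Lemma~\ref{lem:varsigmamax} and the counting argument of Proposition~\ref{prop:sumnormskcube}. The $\liminf$ bookkeeping you flag is handled in the paper exactly as you suggest, by deferring to the argument at the end of the proof of Theorem~\ref{thm:masterboundSLipschitzHessian}.
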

\begin{proof}
	The second-order condition~\eqref{eq:secondorderprogress} implies a lower-bound on step-sizes related to the minimal eigenvalue of the Hessian of $\hat f_k = f \circ \Retr_{x_k}$. Indeed, by definition of the model $m_k$~\eqref{eq:mk},
	\begin{align*}
		\forall s, \dot s \in \T_{x_k}\calM, && \nabla^2 m_k(s)[\dot s] & = \nabla^2 \hat f_k(0)[\dot s] + \varsigma_k\left( \|s\| \dot s + \frac{\inner{s}{\dot s}}{\|s\|}s \right).
	\end{align*}
	It follows that
	\begin{align*}
		\lambdamin(\nabla^2 \hat f_k(0)) & = \min_{\|\dot s\| = 1} \inner{\dot s}{\nabla^2 \hat f_k(0)[\dot s]} \\
			& = \min_{\|\dot s\| = 1} \inner{\dot s}{\nabla^2 m_k(s)[\dot s]} - \varsigma_k \left( \|s\|\|\dot s\|^2 + \frac{\inner{s}{\dot s}^2}{\|s\|} \right) \\ & \geq \lambdamin(\nabla^2 m_k(s)) - 2\varsigma_k \|s\|.
	\end{align*}
	In particular, with $s = s_k$, the second-order progress condition~\eqref{eq:secondorderprogress} and Lemma~\ref{lem:varsigmamax} yield
	\begin{align}
		-\lambdamin(\nabla^2 \hat f_k(0)) & \leq (\theta + 2\varsigma_{\max}) \|s_k\|.
		\label{eq:secondorderstepsizebig}
	\end{align}
	Consider this particular subset of the successful iterations:
	\begin{align*}
		\calS_\varepsilon & = \{  k : \rho_k \geq \eta_1 \textrm{ and } \lambdamin(\nabla^2 \hat f_k(0)) < -\varepsilon \}.
	\end{align*}
	Using Proposition~\ref{prop:sumnormskcube} with~\eqref{eq:secondorderstepsizebig} on this set leads to:
	\begin{align*}
		\frac{3(f(x_0) - \flow)}{\eta_1 \varsigma_{\min}} \geq |\calS_\varepsilon| \left(\frac{\varepsilon}{\theta + 2\varsigma_{\max}}\right)^3,
	\end{align*}
	which is the desired bound on the number of steps in $\calS_\varepsilon$. The limit inferior result follows from an argument similar to that at the end of the proof of Theorem~\ref{thm:masterboundSLipschitzHessian}.
\end{proof}
Here too, if $L$ is known we can set $\varsigma_0 = \varsigma_{\min} = \frac{1}{2}L$ (so that $\varsigma_{\max} = \frac{\gamma_3}{2(1-\eta_2)}L$) and $\theta = \frac{1}{2}L$. With those choices, we find that for a target depending on $L$ we have:
\begin{align}
	K_2(\sqrt{L\varepsilon}) & = \frac{6}{\eta_1} \left(\frac{1}{2} + \frac{\gamma_3}{(1-\eta_2)}\right)^3 (f(x_0) - \flow) \sqrt{L} \frac{1}{\varepsilon^{1.5}}.
\end{align}
This exhibits a complexity scaling with $\sqrt{L}$ when $L$ is known.

Theorem~\ref{thm:masterboundSsecond} is a statement about the Hessian of the pullbacks, $\nabla^2 \hat f_k(0)$, whereas we would more naturally desire a statement about the Hessian of the cost function itself, $\Hess f(x_k)$. For the exponential retraction, these two objects are the same~\eqref{eq:HessfHessfhatExp}. More generally, they are the same for any \emph{second-order retraction} (of which the exponential map is one example)~\citep[\S5]{AMS08}: we defer their (standard) definition to Section~\ref{sec:regularity}. For now, accepting the claim that for second-order retractions we have $\Hess f(x_k) = \nabla^2 \hat f_k(0)$, we get a more directly useful corollary: a complexity result for the computation of approximate second-order critical points on manifolds. The proof is in Appendix~\ref{app:secondorder}.
\begin{corollary} \label{cor:mastercorollarysecond}
	Under~\aref{assu:lowerbound} and~\aref{assu:firstorderregularityscalar},
	for an arbitrary $x_0\in\calM$
	and for any $\varepsilon_{g}, \varepsilon_{H} > 0$,
	if either
	\begin{enumerate}
		\item[(a)] we use the exponential retraction and~\aref{assu:firstorderregularityvector} holds, or
		\item[(b)] we use a second-order retraction and both~\aref{assu:firstorderregularityvectorretraction} and~\aref{assu:DRetr} hold,
	\end{enumerate}
	then
	Algorithm~\ref{algo:ARC} with second-order progress~\eqref{eq:secondorderprogress} enforced
	produces a point $x_k \in \calM$ such that $f(x_k) \leq f(x_0)$, $\|\grad f(x_k)\| \leq \varepsilon_{g}$ and $\lambdamin(\Hess f(x_k)) \geq -\varepsilon_{H}$
	in at most
	\begin{align*}
		\left(1 + \frac{|\log(\gamma_1)|}{\log(\gamma_2)}\right)\left({K_1(\varepsilon_{g}) + K_2(\varepsilon_{H}) + 1}\right) + \frac{1}{\log(\gamma_2)}\log\left(\frac{\varsigma_{\max}}{\varsigma_0}\right) + 1
	\end{align*}
	iterations, with $K_1$ as provided by Theorem~\ref{thm:masterboundSLipschitzHessian} or~\ref{thm:masterboundSLipschitzHessianretraction}, depending on assumptions.
\end{corollary}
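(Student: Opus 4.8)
The plan is to combine the three master bounds (Theorem~\ref{thm:masterboundSLipschitzHessian} or~\ref{thm:masterboundSLipschitzHessianretraction} for the gradient, and Theorem~\ref{thm:masterboundSsecond} for the Hessian of the pullback) with the counting lemmas (Lemma~\ref{lem:boundKsuccessfulsteps} and Lemma~\ref{lem:varsigmamax}), exactly in the spirit of Corollary~\ref{cor:mastercorollary}. First I would argue that the algorithm cannot run forever without producing a good iterate: suppose, for contradiction, that every iterate $x_0,\ldots,x_{\bar k}$ fails at least one of the two target conditions $\|\grad f(x_k)\| \leq \varepsilon_g$ or $\lambdamin(\Hess f(x_k)) \geq -\varepsilon_H$. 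Then each successful iteration $k < \bar k$ has either $\|\grad f(x_{k+1})\| > \varepsilon_g$ (so it is counted by $K_1(\varepsilon_g)$) or $\lambdamin(\Hess f(x_{k+1})) < -\varepsilon_H$; under hypothesis (a) or (b), $\Hess f(x_{k+1}) = \nabla^2 \hat f_{k+1}(0)$ (this is the second-order retraction identity~\eqref{eq:HessfHessfhatExp} and its generalization, to be recalled from Section~\ref{sec:regularity}), so such an iteration is counted by $K_2(\varepsilon_H)$. Hence the number $K$ of successful iterations among $0,\ldots,\bar k -1$ satisfies $K \leq K_1(\varepsilon_g) + K_2(\varepsilon_H) + 1$, where the $+1$ accounts for the single iteration that maps into the first bad point (indices $x_{k+1}$ versus $x_k$, as in the parenthetical after Theorem~\ref{thm:masterboundSLipschitzHessian}).

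Next I would invoke Lemma~\ref{lem:boundKsuccessfulsteps}, whose hypothesis $\varsigma_k \leq \varsigma_{\max}$ is supplied by Lemma~\ref{lem:varsigmamax} under~\aref{assu:firstorderregularityscalar}: the total number $\bar k$ of iterations is bounded by $\bigl(1 + \tfrac{|\log \gamma_1|}{\log \gamma_2}\bigr) K + \tfrac{1}{\log \gamma_2}\log\bigl(\tfrac{\varsigma_{\max}}{\varsigma_0}\bigr)$. Substituting the bound on $K$ and adding one more iteration to reach the good point $x_{\bar k}$ itself yields precisely the claimed count. Monotonicity $f(x_k) \leq f(x_0)$ is immediate because the cost is non-increasing along the iterates of Algorithm~\ref{algo:ARC} (unsuccessful steps leave $x$ unchanged, successful ones strictly decrease $f$ by Proposition~\ref{prop:sumnormskcube}'s argument). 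Under hypothesis (a) we use Theorem~\ref{thm:masterboundSLipschitzHessian} for $K_1$; under hypothesis (b), since a second-order retraction is in particular a retraction and \aref{assu:firstorderregularityvectorretraction}, \aref{assu:DRetr} hold, we use Theorem~\ref{thm:masterboundSLipschitzHessianretraction} for $K_1$ (which is where the ``sufficiently small $\varepsilon_g$'' caveat of Theorem~\ref{thm:informalRETR} enters, through the requirement $q(r) < b$).

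The one genuinely non-routine ingredient is the identity $\Hess f(x_k) = \nabla^2 \hat f_k(0)$ for second-order retractions, which bridges Theorem~\ref{thm:masterboundSsecond} (a statement about the pullback Hessian) and the desired statement about the Riemannian Hessian; everything else is bookkeeping over the three counts. I would therefore state this identity explicitly (citing~\citep[\S5]{AMS08} and the forthcoming Definition in Section~\ref{sec:regularity}) at the start of the proof, note that for the exponential retraction it is~\eqref{eq:HessfHessfhatExp}, and then carry out the counting argument above. Since the excerpt says the proof is deferred to Appendix~\ref{app:secondorder}, here I would simply record the reduction: the corollary follows by applying Theorem~\ref{thm:masterboundSsecond} to bound the ``bad-Hessian'' successful iterations, Theorem~\ref{thm:masterboundSLipschitzHessian} or~\ref{thm:masterboundSLipschitzHessianretraction} to bound the ``bad-gradient'' ones, the second-order retraction identity to translate the former into a statement on $\Hess f$, and Lemmas~\ref{lem:varsigmamax}--\ref{lem:boundKsuccessfulsteps} to convert the bound on successful iterations into a bound on total iterations.
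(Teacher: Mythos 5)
Your proposal is correct and follows essentially the same route as the paper's proof in Appendix~\ref{app:secondorder}: bound the successful iterations with large gradient by $K_1$, those with a significantly negative pullback Hessian by $K_2$, use the second-order-retraction identity $\Hess f(x_k) = \nabla^2 \hat f_k(0)$ to translate the latter into a statement about the Riemannian Hessian, and convert the bound on successful iterations into a bound on total iterations via Lemmas~\ref{lem:varsigmamax} and~\ref{lem:boundKsuccessfulsteps}. The one imprecision is your justification of the $+1$: Theorem~\ref{thm:masterboundSsecond} counts a successful iteration $k$ according to the Hessian at its \emph{base} point $x_k$, not at $x_{k+1}$, so a successful step landing on a bad-Hessian point is captured only through the later successful iteration departing from that point (which may fail to exist for the final distinct iterate); the paper handles this explicitly via the one-to-one correspondence between successful iterations and distinct points, and that edge case—together with $x_0$ possibly being bad—is precisely what the $+1$ absorbs, so your final count $K \leq K_1 + K_2 + 1$ stands.
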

If the retraction is not second order, we still get a bound on the eigenvalues of the Riemannian Hessian if the retraction has bounded acceleration at the origin at $x_k$: see~\citep[\S3.5]{boumal2016globalrates} and also Lemma~\ref{lem:derivativespullback} below.


\section{Regularity assumptions} \label{sec:regularity}

The regularity assumptions~\aref{assu:firstorderregularityscalar} and~\aref{assu:firstorderregularityvectorretraction} pertain to the pullbacks $f \circ \Retr_{x_k}$. As such, they mix the roles of $f$ and $\Retr$. The purpose of this section is to shed some light on these assumptions, specifically in a way that disentangles the roles of $f$ and $\Retr$. In that respect, the main result is Theorem~\ref{thm:regularitycompact} below. Proofs are in Appendix~\ref{app:regularity}.

Each pullback is a function from a Euclidean space $\T_{x_k}\calM$ to $\reals$, so that standard calculus applies. Since the retraction is smooth by definition, pullbacks are as many times differentiable as $f$. This leads to the following simple fact.
\begin{lemma} \label{lem:lipschitzhessianbasic}
	Assume $f \colon \calM \to \reals$ is twice continuously differentiable.
	If there exists $L$ such that, for all $(x, s)$ among the sequence of iterates and trial steps $\{(x_0, s_0), (x_1, s_1), \ldots\}$ produced by Algorithm~\ref{algo:ARC}, with $\hat f = f \circ \Retr_x$, it holds that
	\begin{align}
		\left\| \nabla^2 \hat f(ts) - \nabla^2 \hat f(0) \right\|_{\mathrm{op}} & \leq t L \|s\|
		\label{eq:lipschitzhessianbasic}
	\end{align}
	for all $t \in [0, 1]$, then~\aref{assu:firstorderregularityscalar} holds with this $L$ and~\aref{assu:firstorderregularityvectorretraction} holds with $L' = L$ and $q \equiv 0$.
\end{lemma}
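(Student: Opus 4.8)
The plan is to reduce everything to the elementary one-dimensional Taylor-with-integral-remainder estimates for the smooth function $\hat f = f \circ \Retr_x$ on the Euclidean space $\T_x\calM$, using only the hypothesis \eqref{eq:lipschitzhessianbasic} that the Euclidean Hessian $\nabla^2 \hat f$ is Lipschitz along the ray $t \mapsto ts$ with constant $L\|s\|$. Fix one pair $(x,s)$ from the generated sequence and abbreviate $\phi(t) = \hat f(ts)$ for $t \in [0,1]$, so that $\phi'(t) = \innersmall{\nabla\hat f(ts)}{s}$ and $\phi''(t) = \innersmall{s}{\nabla^2\hat f(ts)[s]}$.

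First I would establish \aref{assu:firstorderregularityscalar}. Write
\begin{align*}
	\hat f(s) - \hat f(0) - \innersmall{s}{\nabla\hat f(0)} - \tfrac12\innersmall{s}{\nabla^2\hat f(0)[s]}
	 = \int_0^1 (1-t)\big(\phi''(t) - \phi''(0)\big)\,\dt,
\end{align*}
which is the standard integral form of the second-order Taylor remainder. Bound the integrand by $|\phi''(t) - \phi''(0)| = |\innersmall{s}{(\nabla^2\hat f(ts) - \nabla^2\hat f(0))[s]}| \leq \opnorm{\nabla^2\hat f(ts) - \nabla^2\hat f(0)}\,\|s\|^2 \leq tL\|s\|^3$ by \eqref{eq:lipschitzhessianbasic}, then integrate: $\int_0^1 (1-t)\,tL\|s\|^3\,\dt = \tfrac{L}{6}\|s\|^3$. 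Specializing to $(x,s) = (x_k, s_k)$ gives exactly the inequality required in \aref{assu:firstorderregularityscalar}.

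Next I would establish \aref{assu:firstorderregularityvectorretraction} with $L' = L$ and $q \equiv 0$, i.e.\ the bound $\|\nabla\hat f(s) - \nabla\hat f(0) - \nabla^2\hat f(0)[s]\| \leq \tfrac{L}{2}\|s\|^2$. Here the cleanest route is the vector-valued fundamental theorem of calculus applied to $t \mapsto \nabla\hat f(ts)$: since $\tfrac{d}{dt}\nabla\hat f(ts) = \nabla^2\hat f(ts)[s]$,
\begin{align*}
	\nabla\hat f(s) - \nabla\hat f(0) - \nabla^2\hat f(0)[s] = \int_0^1 \big(\nabla^2\hat f(ts) - \nabla^2\hat f(0)\big)[s]\,\dt,
\end{align*}
and taking norms, $\big\|\int_0^1 (\nabla^2\hat f(ts) - \nabla^2\hat f(0))[s]\,\dt\big\| \leq \int_0^1 \opnorm{\nabla^2\hat f(ts) - \nabla^2\hat f(0)}\,\|s\|\,\dt \leq \int_0^1 tL\|s\|^2\,\dt = \tfrac{L}{2}\|s\|^2$. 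Again specialize to $(x_k, s_k)$; since the right-hand side of the assumed inequality in \aref{assu:firstorderregularityvectorretraction} with $q\equiv 0$ is precisely $\tfrac{L'}{2}\|s_k\|^2$, this is exactly what is needed.

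I do not anticipate a genuine obstacle here — the statement is deliberately the "soft" part of the regularity discussion, and both conclusions are one-line consequences of Taylor's theorem once the hypothesis is unwound. The only points requiring a modicum of care are (i) justifying the operator-norm estimate $|\innersmall{s}{A[s]}| \leq \opnorm{A}\|s\|^2$ and $\|A[s]\| \le \opnorm{A}\|s\|$ for the self-adjoint operator $A = \nabla^2\hat f(ts) - \nabla^2\hat f(0)$ on the inner-product space $\T_x\calM$, which is immediate from the definition of the operator norm, and (ii) noting that smoothness of $\Retr$ together with twice continuous differentiability of $f$ guarantees $\hat f \in C^2$, so all the integrals above are legitimate; this is already observed in the paragraph preceding the lemma.
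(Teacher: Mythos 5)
Your proposal is correct and follows essentially the same route as the paper: both arguments express the scalar and gradient Taylor remainders of $\hat f$ along the ray $t \mapsto ts$ in integral form and bound the integrands via hypothesis~\eqref{eq:lipschitzhessianbasic}, yielding the constants $L/6$ and $L/2$. The only cosmetic difference is that you use the single-integral remainder with weight $(1-t)$ for the scalar bound where the paper uses an equivalent double-integral representation; the gradient estimate is identical.
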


We call this a \emph{Lipschitz-type} assumption on $\nabla^2 \hat f$ because it compares the Hessians at~$ts$ and~0, rather than comparing them at two arbitrary points on the tangent space. On the other hand, we require this to hold on several tangent spaces with the same constant $L$.

In light of Lemma~\ref{lem:lipschitzhessianbasic}, one way to understand our regularity assumptions is to understand the Hessian of the pullback at points which are not the origin. The following lemma provides the necessary identities. The Hessian formula we have not seen elsewhere.
%
Notation-wise, recall that $T^*$ denotes the adjoint of a linear operator $T$; furthermore, the \emph{intrinsic acceleration} $c''(t)$ of a smooth curve $c(t)$ is the covariant derivative of its velocity vector field $c'(t)$ on the Riemannian manifold $\calM$.
\begin{lemma} \label{lem:derivativespullback}
	Given $f \colon \calM \to \reals$ twice continuously differentiable and $x \in \calM$, the gradient and Hessian of the pullback $\hat f = f \circ \Retr_x$ at $s \in \T_x\calM$ are given by
	\begin{align}
		\nabla \hat f(s) & = T_s^* \grad f(\Retr_x(s)), \label{eq:gradientpullback} \\
		\nabla^2 \hat f(s) & = T_s^* \circ \Hess f(\Retr_x(s)) \circ T_s + W_s,
		\label{eq:hessianpullback}
	\end{align}
	where
	\begin{align}
		T_s & = \D\Retr_x(s) \colon \T_{x}\calM \to \T_{\Retr_x(s)}\calM
		\label{eq:Ts}
	\end{align}
	is linear, and $W_s$ is a symmetric linear operator on $\T_x\calM$ defined through polarization by
	\begin{align}
		\inner{W_s[\dot s]}{\dot s} & = \inner{\grad f(\Retr_x(s))}{c''(0)},
		\label{eq:Whessianpullback}
	\end{align}
	with $c''(0) \in \T_{\Retr_x(s)}\calM$ the intrinsic acceleration on $\calM$ of $c(t) = \Retr_x(s+t\dot s)$ at $t = 0$.
\end{lemma}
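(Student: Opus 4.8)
The plan is to compute the first and second derivatives of the scalar function $\hat f = f \circ \Retr_x$ directly on the linear space $\T_x\calM$, using the chain rule together with the Riemannian machinery (metric-compatibility and symmetry of the Levi-Civita connection). First I would establish the gradient formula~\eqref{eq:gradientpullback}: this is exactly the computation already displayed in~\eqref{eq:DRetrnablagrad} in the excerpt, namely $\D\hat f(s)[\dot s] = \D f(\Retr_x(s))[T_s \dot s] = \inner{\grad f(\Retr_x(s))}{T_s\dot s} = \inner{T_s^*\grad f(\Retr_x(s))}{\dot s}$, so that $\nabla \hat f(s) = T_s^* \grad f(\Retr_x(s))$ by definition of the (Euclidean) gradient on $\T_x\calM$ with its inner product. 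I would just cite that chain of equalities.

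For the Hessian, the cleanest route is to differentiate the scalar map $t \mapsto \D\hat f(s+t\dot s)[\dot s]$ at $t=0$ and recognize the result as $\inner{\nabla^2\hat f(s)[\dot s]}{\dot s}$ (polarization then recovers the full symmetric operator). Set $c(t) = \Retr_x(s+t\dot s)$, a smooth curve on $\calM$ with $c(0) = \Retr_x(s)$ and $c'(0) = \D\Retr_x(s)[\dot s] = T_s\dot s$. Then $\D\hat f(s+t\dot s)[\dot s] = \D f(c(t))[c'(t)] = \inner{\grad f(c(t))}{c'(t)}_{c(t)}$. Differentiating this last expression in $t$ and using compatibility of the Riemannian connection $\nabla$ with the metric gives
\begin{align*}
	\frac{\mathrm{d}}{\mathrm{d}t}\inner{\grad f(c(t))}{c'(t)} & = \inner{\tfrac{\mathrm D}{\dt}\grad f(c(t))}{c'(t)} + \inner{\grad f(c(t))}{\tfrac{\mathrm D}{\dt}c'(t)}.
\end{align*}
At $t=0$ the first term is $\inner{\nabla_{c'(0)}\grad f}{c'(0)} = \inner{\Hess f(\Retr_x(s))[T_s\dot s]}{T_s\dot s} = \inner{T_s^*\Hess f(\Retr_x(s))T_s[\dot s]}{\dot s}$, by the very definition of the Riemannian Hessian as the covariant derivative of the gradient field. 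The second term at $t=0$ is $\inner{\grad f(\Retr_x(s))}{c''(0)}$, where $c''(0) = \tfrac{\mathrm D}{\dt}c'(t)\big|_{t=0}$ is precisely the intrinsic acceleration of $c$. This is the defining identity~\eqref{eq:Whessianpullback} for $W_s$. Combining, $\inner{\nabla^2\hat f(s)[\dot s]}{\dot s} = \inner{(T_s^*\Hess f(\Retr_x(s))T_s + W_s)[\dot s]}{\dot s}$ for all $\dot s$, and since both sides are quadratic forms in $\dot s$ associated with self-adjoint operators, polarization yields~\eqref{eq:hessianpullback}. I should also note that $W_s$ is self-adjoint: it is defined via a genuine quadratic form $\dot s \mapsto \inner{\grad f(\Retr_x(s))}{c''(0)}$, and one checks this is indeed quadratic in $\dot s$ because $c''(0)$ depends quadratically on $\dot s$ (the acceleration of $t\mapsto\Retr_x(s+t\dot s)$ scales like $\|\dot s\|^2$ under $\dot s \mapsto \lambda\dot s$, by reparametrization).

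The main obstacle, and the only genuinely delicate point, is justifying that $\frac{\mathrm{d}}{\mathrm{d}t}\inner{\grad f(c(t))}{c'(t)}$ may be evaluated via the product rule for covariant derivatives along the curve $c$, and that $\tfrac{\mathrm D}{\dt}\grad f(c(t))\big|_{t=0}$ equals $\nabla_{c'(0)}\grad f = \Hess f(c(0))[c'(0)]$; both are standard facts about the Levi-Civita connection (metric compatibility, and the tensorial/pointwise nature of $\nabla$ in its direction argument), but they must be invoked carefully since $\grad f$ is being restricted to the curve $c$ rather than treated as a field on an open set. A second minor point is confirming that $W_s$ is well-defined by polarization, i.e. that the map $\dot s \mapsto \inner{\grad f(\Retr_x(s))}{c''_{\dot s}(0)}$ is indeed a quadratic form; this follows from smoothness of $\Retr_x$ and the observation above about the scaling of intrinsic acceleration. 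Everything else is routine calculus on the linear space $\T_x\calM$.
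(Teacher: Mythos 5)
Your proposal is correct and follows essentially the same route as the paper: both differentiate $g(t)=f(c(t))=\hat f(s+t\dot s)$ along $c(t)=\Retr_x(s+t\dot s)$, use metric compatibility of the connection to split $g''(0)$ into the $T_s^*\circ\Hess f\circ T_s$ term and the $\inner{\grad f}{c''(0)}$ term, and then polarize. One small remark: your justification that $\dot s\mapsto\inner{\grad f(\Retr_x(s))}{c''(0)}$ is a quadratic form via the scaling $c''(0)\sim\|\dot s\|^2$ only establishes degree-two homogeneity, which is not by itself enough; the paper instead observes that this term equals the difference $\innersmall{\nabla^2\hat f(s)[\dot s]}{\dot s}-\inner{\Hess f(\Retr_x(s))[T_s\dot s]}{T_s\dot s}$ of two quadratic forms, an identity your derivation already contains.
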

The particular case $s = 0$ connects to the comments around Corollary~\ref{cor:mastercorollarysecond}: since $T_0$ is identity by definition of retractions, we find that
\begin{align}
	\nabla^2 \hat f(0) & = \Hess f(x) + W_0,
	\label{eq:nablatwohatfHessfWzero}
\end{align}
where $W_0$ is zero in particular if the initial acceleration of retraction curves is zero (or if $\grad f(x) = 0$). As in~\citep[\S5]{AMS08}, this motivates the definition of second-order retractions, for which $\nabla^2 \hat f(0) = \Hess f(x)$.
\begin{definition}[Second-order retraction]\label{def:retrsecond}
	A retraction $\Retr$ on $\calM$ is \emph{second order} if, for any $x\in\calM$ and $\dot s \in \T_x\calM$, the curve $c(t) = \Retr_x(t\dot s)$ has zero initial acceleration: $c''(0) = 0$.
\end{definition}
Practical second-order retractions are often available~\citep[Ex.~23]{absil2012retractions}.
To prove our main result, we further restrict retractions.
\TODOF{Make a more explicit comment regarding manifolds with negative curvature, since now have full details stating the operator norm of $T_s$ is $\cosh(\|s\|)$ on hyperbolic space, and that goes to infinity (exponentially) with $\|s\|$.---On the other hand, I'm not going to do much about it here, and it's already long. At least, as written now, the door is open for further refinements.}
\begin{definition}[Second-order nice retraction] \label{def:retrscndnice}
	Let $S$ be a subset of the tangent bundle $\T\calM$.
	A retraction $\Retr$ on $\calM$ is \emph{second-order nice on $S$} if there exist constants $c_1, c_2, c_3$ such that, for all $(x, s) \in S$ and for all $\dot s \in \T_x\calM$, all of the following hold:
	\TODOF{Are exponential maps second-order nice? Certainly not on manifolds with sectional curvature upper bounded by a negative number, because then even the sigmamin of DRetr is lower bounded by $\sinh(\|s\|)/\|s\|$ (roughly; check the Jacobi result), and this goes to infinity with $\|s\|$ (and pretty fast, too!) We may have to put in some restrictions on norm of $\|s\|$ (it could work because around 0 that function is pretty flat; though remember: it's the sigmamin, not the sigmamax) ... See notes end of NB36, Nov.\ 21, 2018}
	\begin{enumerate}
		\item $\forall \tauort \in [0, 1]$, $\|T_{ts}\|_{\mathrm{op}} \leq c_1$ where $T_{ts} = \D\Retr_x(ts)$; 
		\item $\forall \tauort \in [0, 1]$, the covariant derivative of $U(\tauort) = T_{\tauort s} \dot s$ satisfies  $\left\| \frac{\D}{\mathrm{d}\tauort} U(\tauort) \right\| \leq c_2 \|s\|\|\dot s\|$;  and
		\item $\|c''(0)\| \leq c_3 \|s\| \|\dot s\|^2 \textrm{ where } c(t) = \Retr_x(s+t\dot s)$.
	\end{enumerate}
	If this holds for $S = \T\calM$, we say $\Retr$ is \emph{second-order nice}.
\end{definition}
Second-order nice retractions are, in particular, second-order retractions (consider $s = 0$ in the last condition).
For $\calM$ a Euclidean space, the canonical retraction $\Retr_x(s) = x+s$ is second-order nice with $c_1 = 1$ and $c_2 = c_3 = 0$.
The classical retraction on the sphere is also second-order nice, with small constants $c_1, c_2, c_3$. We expect this to be the case for many usual retractions on compact or flat manifolds. 
For manifolds with negative curvature, the exponential retraction would lead $\opnorm{T_{s}}$ to grow arbitrarily large with $s$ going to infinity, hence it is important to consider restrictions to appropriate subsets, or to use another retraction.
\begin{proposition}\label{prop:sphereretrnice}
	For the unit sphere $\calM = \{ x\in\Rn : \|x\| = 1 \}$ as a Riemannian submanifold of $\Rn$, the retraction $\Retr_x(s) = \frac{x+s}{\|x+s\|}$ is second-order nice with $c_1 = 1, c_2 = \frac{4\sqrt{3}}{9}, c_3 = 2$.
\end{proposition}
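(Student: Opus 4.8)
The plan is to work directly with the retraction $\Retr_x(s) = \frac{x+s}{\|x+s\|}$ on the sphere, compute its differential $T_s = \D\Retr_x(s)$ explicitly, and then bound the three quantities in Definition~\ref{def:retrscndnice} in turn. First I would set $u = u(s) = x+s$, $\nu = \nu(s) = \|u\| = \sqrt{1 + \|s\|^2}$ (using $\inner{x}{s}=0$ since $s\in\T_x\calM$), so that $\Retr_x(s) = u/\nu$. Differentiating, for $\dot s \in \T_x\calM$ one gets
\begin{align*}
	T_s \dot s & = \frac{\dot s}{\nu} - \frac{\inner{u}{\dot s}}{\nu^3} u = \frac{1}{\nu}\left( \dot s - \frac{\inner{u}{\dot s}}{\nu^2} u \right),
\end{align*}
i.e.\ $T_s = \frac{1}{\nu}\Pi_{u^\perp}$ where $\Pi_{u^\perp}$ is the orthogonal projector onto the hyperplane perpendicular to $u$ in $\Rn$ (note $T_s\dot s$ is indeed tangent at $\Retr_x(s)$, as it is orthogonal to $u$). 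Since $\opnorm{\Pi_{u^\perp}} = 1$ and $\nu \geq 1$, this immediately gives condition 1 with $c_1 = 1$; replacing $s$ by $ts$ changes nothing to this bound, so $\opnorm{T_{ts}} \leq 1$ for all $t\in[0,1]$.

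For condition 2, I would parametrize $U(t) = T_{ts}\dot s$ as a curve of vectors in $\Rn$ along $c(t) = \Retr_x(ts)$, and use the fact that for a Riemannian submanifold of $\Rn$ the covariant derivative is the orthogonal projection (onto $\T_{c(t)}\calM$) of the ordinary derivative in $\Rn$. So $\frac{\D}{\dt}U(t) = \Pi_{c(t)^\perp}\big( \frac{\mathrm{d}}{\dt} U(t) \big)$, hence $\|\frac{\D}{\dt}U(t)\| \leq \|\frac{\mathrm{d}}{\dt}U(t)\|_{\Rn}$. With $u(t) = x + ts$, $\nu(t) = \sqrt{1+t^2\|s\|^2}$, one has $U(t) = \frac{1}{\nu(t)}\big(\dot s - \frac{\inner{u(t)}{\dot s}}{\nu(t)^2}u(t)\big)$; differentiating this rational-in-$t$ expression and using $|\inner{u(t)}{\dot s}| = |t|\,|\inner{s}{\dot s}| \leq \|s\|\|\dot s\|$, $\nu(t)\geq 1$, $\frac{\mathrm d}{\dt}\nu(t) = \frac{t\|s\|^2}{\nu(t)}$, and $\|u(t)\| = \nu(t)$, one extracts a bound of the form $\|\frac{\mathrm d}{\dt}U(t)\|_{\Rn} \leq c_2\|s\|\|\dot s\|$. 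The claimed constant $c_2 = \frac{4\sqrt3}{9}$ should come out of optimizing the resulting scalar function of $t\|s\|$ over $[0,\infty)$ (the factor $\|\dot s\|$ factors out by linearity; the $\|s\|$ factors out after writing everything in terms of $\tau = t\|s\|$). This is the computational core and I expect it to be the main obstacle: getting the cleanest possible algebraic form for $\frac{\mathrm d}{\dt}U$ before bounding, so that the extremization producing $\frac{4\sqrt3}{9}$ is tractable rather than a mess. A useful simplification is that WLOG $\|\dot s\| = 1$ and one may decompose $\dot s$ into its component along $s$ and its component orthogonal to $s$ (in $\T_x\calM$), reducing to a two-dimensional picture.

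For condition 3, I need the intrinsic acceleration $c''(0)$ of $c(t) = \Retr_x(s + t\dot s)$. Again using the submanifold structure, $c''(0) = \Pi_{\Retr_x(s)^\perp}\big( \ddot c(0) \big)$ where $\ddot c(0)$ is the ordinary second derivative in $\Rn$. Writing $w(t) = x + s + t\dot s$, $c(t) = w(t)/\|w(t)\|$, a direct computation of $\ddot c(0)$ in terms of $w(0) = u$, $\dot w = \dot s$, $\|u\| = \nu$, and the inner products $\inner{u}{\dot s}$, $\|\dot s\|^2$ gives an expression whose norm (after projection, which can only decrease it, and after using $|\inner{u}{\dot s}|\leq \|s\|\|\dot s\|$ and $\nu\geq 1$) is bounded by $2\|s\|\|\dot s\|^2$. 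Intuitively the $\|\dot s\|^2$ scaling is automatic (second derivative in $t$, curve is degree-zero-homogeneous-ish), and the $\|s\|$ scaling reflects that $c''(0)$ vanishes when $s = 0$ (consistent with the sphere retraction being second order: $\Retr_x(t\dot s)$ has zero initial acceleration). So I would verify $c_3 = 2$ works, and also double-check that $s = 0$ indeed yields $c''(0) = 0$ as a sanity check. Assembling the three bounds with $S = \T\calM$ (all bounds are global in $s$ since $\nu \geq 1$ does all the work) completes the proof.
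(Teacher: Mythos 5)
Your overall route is exactly the paper's: compute $T_s$ explicitly as $\frac{1}{\nu}$ times the orthogonal projector onto $\T_{\Retr_x(s)}\calM$ (giving $c_1=1$), differentiate $U(t)=T_{ts}\dot s$ in the ambient space and project to get the covariant derivative, and compute $c''(0)$ by projecting the ordinary second derivative (which, with $|\innersmall{s}{\dot s}|\le\|s\|\|\dot s\|$ and $\nu\ge 1$, gives $c_3=2$; your sanity check that $c''(0)=0$ when $s=0$ is also consistent with the paper, which even notes $c''(0)=0$ whenever $s\perp\dot s$). Conditions 1 and 3 are handled correctly.

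The one step that would fail as written is your treatment of condition 2: you propose to use $\bigl\|\frac{\D}{\dt}U(t)\bigr\|\le\bigl\|\frac{\mathrm{d}}{\dt}U(t)\bigr\|$ and then bound the full Euclidean derivative. That inequality is too lossy to produce $c_2=\frac{4\sqrt3}{9}\approx 0.77$. Writing $U(t)=\bigl[g(t)I_n - t\,g(t)^3(x+ts)s\transpose\bigr]\dot s$ with $g(t)=(1+t^2\|s\|^2)^{-1/2}$, the Euclidean derivative at $t=0$ is $-\innersmall{s}{\dot s}\,x$, whose norm can equal $\|s\|\|\dot s\|$; so the naive bound forces $c_2\ge 1$. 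The paper's computation instead applies the projection $\Proj_{c(t)}$ \emph{before} taking norms: the term of $\frac{\mathrm{d}}{\dt}U(t)$ proportional to $x+ts$ is annihilated (it is normal to the sphere at $c(t)$), and using $g'(t)=-t\,g(t)^3\|s\|^2$ one is left with $U'(t)=-t\,g(t)^3\,\Proj_{c(t)}\bigl(\bigl[\|s\|^2 I_n + ss\transpose\bigr]\dot s\bigr)$, hence $\|U'(t)\|\le 2t\,g(t)^3\|s\|^2\|\dot s\|$; maximizing $\tau\mapsto 2\tau/(1+\tau^2)^{3/2}$ at $\tau=t\|s\|=1/\sqrt2$ then yields exactly $\frac{4\sqrt3}{9}\|s\|\|\dot s\|$. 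So the extremization you anticipate is the right one, but only after the normal component has been cancelled by the projection rather than discarded by a norm inequality.
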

\begin{theorem} \label{thm:regularitycompact}
	Let $f \colon \calM \to \reals$ be three times continuously differentiable.
	Assume the retraction is second-order nice on the set $\{(x_0, s_0), (x_1, s_1), \ldots \}$ of points and steps  generated by Algorithm~\ref{algo:ARC} (see Definition~\ref{def:retrscndnice}).
	If the sequence $x_0, x_1, x_2, \ldots$ remains in a compact subset of $\calM$,
	then \aref{assu:firstorderregularityscalar} and \aref{assu:firstorderregularityvectorretraction} are satisfied
	with a same $L$ (related to the Lipschitz properties of $f$, $\grad f$ and $\Hess f$: see the proof for an explicit expression) and $q \equiv 0$.
	%
	
	Algorithm~\ref{algo:ARC} is a descent method, so that the last condition holds in particular if the sublevel set $\{x \in \calM : f(x) \leq f(x_0)\}$ is compact, and a fortiori if $\calM$ is compact.
\end{theorem}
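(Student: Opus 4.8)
The plan is to reduce the statement to Lemma~\ref{lem:lipschitzhessianbasic}: it suffices to produce a constant $L$ such that, for every $(x,s)$ among the iterates and trial steps generated by Algorithm~\ref{algo:ARC} and every $t\in[0,1]$, the pullback $\hat f=f\circ\Retr_x$ obeys $\opnormsmall{\nabla^2\hat f(ts)-\nabla^2\hat f(0)}\leq tL\|s\|$. That lemma then immediately delivers \aref{assu:firstorderregularityscalar} with this $L$ and \aref{assu:firstorderregularityvectorretraction} with $L'=L$ and $q\equiv 0$, which is exactly the claim.

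First I would confine the analysis to a fixed compact set. The iterates $x_k$ live in a compact $\calK\subseteq\calM$ by hypothesis, so $G_0:=\sup_{\calK}\|\grad f\|$ and $H_0:=\sup_{\calK}\opnormsmall{\Hess f}$ are finite. A second-order nice retraction is in particular second order, hence $\nabla^2\hat f_k(0)=\Hess f(x_k)$ by \eqref{eq:nablatwohatfHessfWzero}, while $\nabla\hat f_k(0)=\grad f(x_k)$ always; feeding this into the model-decrease inequality $m_k(s_k)\leq m_k(0)$ together with $\varsigma_k\geq\varsigma_{\min}$ yields $\tfrac{\varsigma_{\min}}{3}\|s_k\|^3\leq G_0\|s_k\|+\tfrac12 H_0\|s_k\|^2$, and hence a uniform bound $\|s_k\|\leq D$. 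Therefore $\calK':=\Retr(\{(x,s)\in\T\calM:x\in\calK,\ \|s\|\leq D\})$ is a compact subset of $\calM$ containing every iterate and every point of every curve $\tau\mapsto\Retr_{x_k}(\tau s_k)$, $\tau\in[0,1]$. Because $f$ is $C^3$, the constants $G:=\sup_{\calK'}\|\grad f\|$ and $H:=\sup_{\calK'}\opnormsmall{\Hess f}$ are finite, as is a bound $L_H$ on the norm of the covariant derivative of the Hessian tensor field of $f$ over $\calK'$; moreover $L_H$ controls the variation of the Hessian along curves, $\opnormsmall{P_\gamma^{-1}\circ\Hess f(y)\circ P_\gamma-\Hess f(x)}\leq L_H\,\length(\gamma)$ for any curve $\gamma$ in $\calK'$ from $x$ to $y$ with parallel transport $P_\gamma$ (a standard fundamental-theorem-of-calculus argument, in the spirit of the proof of Proposition~\ref{prop:LipschitzHessianBounds}).

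Then I would fix $(x,s)$ and $t\in[0,1]$, put $y=\Retr_x(ts)$, $T=\D\Retr_x(ts)$, and let $P\colon\T_x\calM\to\T_y\calM$ be parallel transport along $\tau\mapsto\Retr_x(\tau s)$ from $0$ to $t$. By Lemma~\ref{lem:derivativespullback} and $\nabla^2\hat f(0)=\Hess f(x)$,
\begin{align*}
\nabla^2\hat f(ts)-\nabla^2\hat f(0) = T^*\circ\Hess f(y)\circ T + W_{ts}-\Hess f(x).
\end{align*}
Equation~\eqref{eq:Whessianpullback} with condition~3 of Definition~\ref{def:retrscndnice} gives $\opnormsmall{W_{ts}}\leq c_3 G\,t\|s\|$. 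Conditions~1--2 give $\opnormsmall{T}\leq c_1$ and $\opnormsmall{T-P}\leq c_2 t\|s\|$ (since $(T_{\tau s}-P_{\tau s})\dot s$ vanishes at $\tau=0$ and has covariant derivative $\tfrac{\D}{\mathrm{d}\tau}(T_{\tau s}\dot s)$, of norm $\leq c_2\|s\|\|\dot s\|$). Inserting $P$ twice,
\begin{align*}
\opnormsmall{T^*\circ\Hess f(y)\circ T-\Hess f(x)} \leq (c_1+1)H\,\opnormsmall{T-P} + \opnormsmall{P^{-1}\circ\Hess f(y)\circ P-\Hess f(x)},
\end{align*}
and the last term is at most $L_H c_1 t\|s\|$ because $\tau\mapsto\Retr_x(\tau s)$ has speed $\opnormsmall{T_{\tau s}s}\leq c_1\|s\|$. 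Collecting, $\opnormsmall{\nabla^2\hat f(ts)-\nabla^2\hat f(0)}\leq\big(c_3 G+(c_1+1)c_2 H+c_1 L_H\big)t\|s\|$, so the reduction succeeds with $L=c_3 G+(c_1+1)c_2 H+c_1 L_H$. For the final sentence, successful iterations satisfy $f(x_k)-f(x_{k+1})=\rho_k\big(m_k(0)-m_k(s_k)+\tfrac{\varsigma_k}{3}\|s_k\|^3\big)\geq 0$ while unsuccessful ones leave $x$ fixed, so the iterates stay in $\{x:f(x)\leq f(x_0)\}$, which is compact whenever that sublevel set is, a fortiori when $\calM$ is compact.

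The hard part will be the tangent-space bookkeeping in the third paragraph: the estimate $\opnormsmall{T_{ts}-P_{ts}}\leq c_2 t\|s\|$ (where condition~2 of second-order niceness enters) and the splitting of $T^*\circ\Hess f(y)\circ T-\Hess f(x)$ both require keeping track of which tangent space each object lives on. The other delicate point is the a priori step bound $\|s_k\|\leq D$, which is what traps the curves $\tau\mapsto\Retr_{x_k}(\tau s_k)$ in a single compact set on which $f$, $\grad f$, $\Hess f$ and the constants $c_1,c_2,c_3$ are simultaneously controlled (this implicitly uses that the conditions in Definition~\ref{def:retrscndnice} are stated along whole rays). The rest is routine operator-norm manipulation.
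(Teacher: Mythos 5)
Your proposal is correct and follows essentially the same route as the paper: reduce to Lemma~\ref{lem:lipschitzhessianbasic}, trap all retraction curves in a compact set via the a priori step bound (the paper's Lemmas~\ref{lem:boundessteps} and~\ref{lem:calNsubsetcompact}), and then bound $\nabla^2\hat f(ts)-\nabla^2\hat f(0)$ through the decomposition of Lemma~\ref{lem:derivativespullback}, with condition~3 controlling $W_{ts}$ and conditions~1--2 plus a compactness bound on $\nabla\Hess f$ controlling the conjugated-Hessian term. The only (valid) variation is in that last estimate, where you insert parallel transport and use $\opnormsmall{T_{ts}-P_{ts}}\leq c_2t\|s\|$, whereas the paper's Lemma~\ref{lem:coreOfThmAboutA2} differentiates the quadratic form $\tau\mapsto\innersmall{\Hess f(c(\tau))[U(\tau)]}{U(\tau)}$ and integrates, yielding the slightly different constant $L=c_3G+2c_1c_2H+c_1^3J$.
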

%

This general result shows existence of (loose) bounds for the Lipschitz constants. For specific optimization problems, it is sometimes easy to derive more accurate constants by direct computation: see for example~\citep[App.~D]{criscitiello2019escapingsaddles} for PCA.


\section{Controlling the differentiated retraction} \label{sec:Dretr}

In Theorem~\ref{thm:masterboundSLipschitzHessianretraction}, we control the worst-case running time of ARC via the differential of the retraction $\Retr$, through~\aref{assu:DRetr}. This assumption, which does not come up in the Euclidean case, involves constants $a, b$ to control $\sigmamin(\D\Retr_{x_k}(s_k))$. Our first result shows~\aref{assu:DRetr} is satisfied for some $a$ and $b$ for any retraction on any manifold, provided the sublevel set of $f(x_0)$ is compact (see also Theorem~\ref{thm:regularitycompact}). This is mostly a topological argument. Proofs are in Appendix~\ref{sec:proofsDretr}.
\begin{theorem} \label{thm:retractionsgeneralmainnew}
	Let $\Retr$ be a retraction on a Riemannian manifold $\calM$, and let $\calU$ be a nonempty compact subset of $\calM$. For any $b \in (0, 1)$ there exists $a > 0$ such that, for all $x \in \calU$ and $s \in \T_x\calM$ with $\|s\|_x \leq a$, we have $\sigmamin(\D\Retr_x(s)) \geq b$. In particular, \aref{assu:DRetr} is satisfied with such $(a, b)$ provided the iterates $x_0, x_1, x_2, \ldots$ remain in $\calU$.
\end{theorem}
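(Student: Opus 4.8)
The plan is to prove Theorem~\ref{thm:retractionsgeneralmainnew} by a compactness-and-continuity argument on the tangent bundle, reducing the statement to the fact that $\sigmamin(\D\Retr_x(0)) = 1$ for every $x$ together with uniform continuity on a compact set. First I would recall that $\Retr \colon \T\calM \to \calM$ is smooth, hence the map $(x,s) \mapsto \D\Retr_x(s)$ is continuous; since $\sigmamin$ (the smallest singular value of a linear operator between the inner-product spaces $\T_x\calM$ and $\T_{\Retr_x(s)}\calM$) is a continuous function of the operator, the composite $g(x,s) \triangleq \sigmamin(\D\Retr_x(s))$ is continuous on $\T\calM$. By property (ii) of Definition~\ref{def:retraction}, $\D\Retr_x(0) = \Id_{\T_x\calM}$, so $g(x,0) = 1$ for all $x\in\calM$, and in particular for all $x\in\calU$.

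Next I would produce the uniform radius $a$. Fix $b\in(0,1)$. For each $x\in\calU$, continuity of $g$ at $(x,0)$ gives a radius $a_x > 0$ and a neighborhood on which $g > b$; more precisely, there is an open neighborhood $N_x$ of $x$ in $\calM$ and $a_x > 0$ such that $g(y,s) \geq b$ whenever $y\in N_x$ and $\|s\|_y \leq a_x$. (One gets such a product-type neighborhood from working in a local trivialization of $\T\calM$ around $x$, where the zero section is $N_x\times\{0\}$; this is where a small amount of bundle bookkeeping is needed.) The sets $\{N_x : x\in\calU\}$ cover the compact set $\calU$, so finitely many $N_{x_1},\ldots,N_{x_m}$ suffice; take $a = \min(a_{x_1},\ldots,a_{x_m}) > 0$. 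Then for any $x\in\calU$ we have $x\in N_{x_i}$ for some $i$, and if $\|s\|_x \leq a \leq a_{x_i}$ then $g(x,s)\geq b$, which is exactly $\sigmamin(\D\Retr_x(s))\geq b$.

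Finally, to conclude \aref{assu:DRetr}: if the iterates $x_0,x_1,\ldots$ all lie in $\calU$, then in particular every iterate $x_k$ from which a successful step is taken lies in $\calU$, so the implication ``$\|s_k\|\leq a \implies \sigmamin(\D\Retr_{x_k}(s_k))\geq b$'' holds at each successful iteration; this is \aref{assu:DRetr} with the constants $(a,b)$ just constructed (and since $b<1$ is achievable while $\D\Retr_x(0)=\Id$ forces $\sigmamin\leq 1$ near $0$, the stated constraint $b\leq 1$ is respected). The one genuinely delicate point is making the local neighborhood argument clean: $\D\Retr_x(s)$ maps between two \emph{different} tangent spaces whose inner products vary with the base point, so ``$g$ is continuous on $\T\calM$'' should be justified either by passing to local coordinates/trivializations (where everything becomes a continuous matrix-valued map and $\sigmamin$ is continuous in the usual sense) or by invoking that $\sigmamin$ of an operator can be written as $\min_{\|v\|=1}\|\D\Retr_x(s)[v]\|$, a quantity that depends continuously on $(x,s)$ because the unit sphere bundle is locally compact and the norms vary smoothly. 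I expect this trivialization step to be the main obstacle; everything after it is a routine finite-subcover argument.
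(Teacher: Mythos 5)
Your proof is correct, but it takes a genuinely different route from the paper's. The paper proves the theorem by defining $t(r) = \inf\{\sigmamin(\D\Retr_x(s)) : x \in \calU,\ \|s\|_x \leq r\}$ and showing that $t$ is lower semicontinuous at $r=0$ (where $t(0)=1$) via Berge's maximum theorem; verifying the hypotheses of that theorem requires showing that the correspondence $r \mapsto \{(x,s) : x\in\calU,\ \|s\|_x\leq r\}$ is upper semicontinuous, which is the content of the technical Lemma~\ref{lem:neighborhoodballsTM} (any neighborhood in $\T\calM$ of a ``disk bundle'' over $\calU$ contains a uniform thickening of it). You instead argue directly: $g(x,s)=\sigmamin(\D\Retr_x(s))$ is continuous on $\T\calM$ with $g(x,0)=1$, each $x\in\calU$ admits a product-type neighborhood $\{(y,s) : y\in N_x,\ \|s\|_y\leq a_x\}$ on which $g\geq b$, and a finite subcover of $\calU$ yields a uniform $a$. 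The two arguments hinge on the same local fact: the ``delicate point'' you flag (extracting a product neighborhood of $(x,0)$ respecting the Riemannian norm, via an orthonormal-frame trivialization) is exactly Step~2 of the paper's Lemma~\ref{lem:neighborhoodballsTM} specialized to $r\equiv 0$ and a single base point, and your sketch of how to handle it (orthonormal local frame, so the fiber coordinate norm equals $\|s\|_y$) is the same device the paper uses. What your route buys is economy: you avoid Berge's theorem entirely and need only the easy pointwise case of the neighborhood lemma, not its Lipschitz-graded version with the function $\Delta$. What the paper's route buys is a reusable, more general statement (Lemma~\ref{lem:neighborhoodballsTM} for Lipschitz radius functions $r(\cdot)$), though only the constant case is invoked here. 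Both proofs are complete modulo the same standard trivialization bookkeeping, which you correctly identify and resolve.
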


We further quantify the constants $a$ and $b$ in two cases of interest:
\begin{enumerate}
	\item For the Stiefel manifold $\St(n,p) = \{X \in \mathbb{R}^{n \times p} : X\transpose X = I_p\}$ as a Riemannian submanifold of $\Rnp$ with the usual inner product $\inner{A}{B} = \trace(A\transpose B)$, we explicitly control $(a, b)$ for the popular Q-factor retraction ($\Retr_X(S)$ is obtained by Gram--Schmidt orthonormalization of the columns of $X+S$). Special cases include the sphere ($p=1$) and the orthogonal group ($p = n$).
	\item For complete manifolds with bounded sectional curvature, we control $(a, b)$ for the case of the exponential retraction. Important special cases include Euclidean spaces (flat manifolds), manifolds with nonpositive curvature (Hadamard manifolds, including the manifold of positive definite matrices~\citep{moakher2006symmetric,Bhatia}), and compact manifolds \citep[\S9.3, p166]{bishop1964geometry}.
\end{enumerate}

The first result follows from a direct calculation. 
\begin{proposition} \label{prop:stiefelmastercorollary}
	For the Stiefel manifold with the Q-factor retraction,
	for any $a > 0$, define $b  = 1 - 3a - \frac{1}{2}a^2$. If $b$ is positive, then~\aref{assu:DRetr} holds with these $a$ and $b$. Moreover, for the sphere, we have that~\aref{assu:DRetr} is satisfied for any $a > 0$ and $b = \frac{1}{1 + a^2}$.
\end{proposition}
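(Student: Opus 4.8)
The plan is to write the Q-factor retraction in closed form via the Cholesky factor and differentiate. Fix $X \in \St(n,p)$ and $S \in \T_X\St(n,p)$, so $X\transpose S + S\transpose X = 0$, and set $Y = X+S$. Then $Y\transpose Y = I_p + S\transpose S$ is positive definite; let $R = R(S) \in \Rpp$ be its Cholesky factor (upper triangular, positive diagonal), a smooth function of $S$, and put $Q = Y R^{-1}$. One checks $Q\transpose Q = R^{-\top}(I_p + S\transpose S)R^{-1} = I_p$, so $Q = \qf(Y) = \Retr_X(S)$ and $\Retr_X$ is defined on all of $\T_X\St(n,p)$. Differentiating $\Retr_X(S) = (X+S)R(S)^{-1}$ along $\dot S \in \T_X\St(n,p)$, with $\dot R = \D R(S)[\dot S]$ and $H := \dot R R^{-1}$ (upper triangular, being a product of upper triangular matrices), yields
\[
	T_S \dot S := \D\Retr_X(S)[\dot S] = \dot S R^{-1} - Q H.
\]
Since $Q\transpose Q = I_p$ we have $\frobnormsmall{QH} = \frobnormsmall{H}$, so it suffices to lower-bound $\frobnormsmall{\dot S R^{-1}}$ and upper-bound $\frobnormsmall{H}$, each relative to $\frobnormsmall{\dot S}$.

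For the first bound: the squared singular values of $R$ are the eigenvalues of $R\transpose R = I_p + S\transpose S$, namely $1 + \sigma_i(S)^2$, so $\sigmamax(R) = \sqrt{1 + \opnormsmall{S}^2} \le \sqrt{1 + \frobnormsmall{S}^2}$, hence $\opnormsmall{R^{-1}} \le 1$ and $\sigmamin(R^{-1}) = 1/\sigmamax(R) \ge (1 + \frobnormsmall{S}^2)^{-1/2} \ge 1 - \tfrac12\frobnormsmall{S}^2$; a standard trace inequality then gives $\frobnormsmall{\dot S R^{-1}} \ge \sigmamin(R^{-1})\,\frobnormsmall{\dot S}$. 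For the second: differentiating $R\transpose R = I_p + S\transpose S$ gives $\dot R\transpose R + R\transpose \dot R = \dot S\transpose S + S\transpose\dot S$, i.e. $R\transpose(H + H\transpose)R = \dot S\transpose S + S\transpose\dot S$, so $H + H\transpose = B$ with $B := R^{-\top}(\dot S\transpose S + S\transpose\dot S)R^{-1}$ symmetric. As $H$ is upper triangular, $H$ equals the strictly-upper-triangular part of $B$ plus $\tfrac12\diag(B)$, whence $\frobnormsmall{H}^2 \le \tfrac12\frobnormsmall{B}^2$; and $\frobnormsmall{B} \le \opnormsmall{R^{-1}}^2\,\frobnormsmall{\dot S\transpose S + S\transpose\dot S} \le 2\,\frobnormsmall{S\transpose\dot S} \le 2\,\frobnormsmall{S}\frobnormsmall{\dot S}$. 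Thus $\frobnormsmall{H} \le \sqrt2\,\frobnormsmall{S}\frobnormsmall{\dot S}$.

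Combining, for every $\dot S \in \T_X\St(n,p)$,
\[
	\frobnormsmall{T_S\dot S} \ge \frobnormsmall{\dot S R^{-1}} - \frobnormsmall{QH} \ge \Big(1 - \tfrac12\frobnormsmall{S}^2 - \sqrt2\,\frobnormsmall{S}\Big)\frobnormsmall{\dot S},
\]
so $\sigmamin(\D\Retr_X(S)) \ge 1 - \sqrt2\,\frobnormsmall{S} - \tfrac12\frobnormsmall{S}^2 \ge 1 - 3\frobnormsmall{S} - \tfrac12\frobnormsmall{S}^2$. Hence $\frobnormsmall{S} \le a$ implies $\sigmamin(\D\Retr_X(S)) \ge 1 - 3a - \tfrac12 a^2 = b$, which is exactly \aref{assu:DRetr} for this $(a,b)$ (meaningful when $b>0$). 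For the sphere ($p=1$) the computation specializes: $R = \sqrt{1+\|s\|^2}$ is scalar, and with $\alpha := 1+\|s\|^2$ one gets $\D\Retr_x(s)[\dot s] = \alpha^{-1/2}\dot s - \alpha^{-3/2}(x+s)(s\transpose\dot s)$; using $x\transpose\dot s = 0$ and $\|x+s\|^2 = \alpha$, a short expansion gives $\|\D\Retr_x(s)[\dot s]\|^2 = \alpha^{-1}\|\dot s\|^2 - \alpha^{-2}(s\transpose\dot s)^2$, which over unit tangent vectors $\dot s$ at $x$ attains its minimum $\alpha^{-2}$ at $\dot s$ proportional to $s$ (possible since $s \perp x$), so $\sigmamin(\D\Retr_x(s)) = 1/(1+\|s\|^2) \ge 1/(1+a^2)$ whenever $\|s\| \le a$, for any $a>0$.

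The main obstacle is getting the differential formula right and handling the triangular structure cleanly: recognizing $H = \dot R R^{-1}$ as upper triangular and extracting it from the symmetric identity $H + H\transpose = B$ obtained by differentiating the Cholesky relation, and being disciplined about operator- versus Frobenius-norm estimates so the constants come out as stated. Everything else is routine linear algebra; note that the argument in fact delivers the slightly stronger constant $1 - \sqrt2\,a - \tfrac12 a^2$, which a fortiori gives the claimed $b = 1 - 3a - \tfrac12 a^2$.
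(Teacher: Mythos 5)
Your proof is correct, and it reaches the Stiefel bound by a route that differs from the paper's in a way worth noting. The paper starts from the closed-form expression for $\D\Retr_X(S)[Z]$ in terms of the operator $\rho_{\mathrm{skew}}(M) = \mathrm{tril}(M) - \mathrm{tril}(M)\transpose$ (quoting \citet[Ex.~8.1.5]{AMS08}), uses tangency of $Z$ to rewrite it as $ZR^{-1}$ plus a correction $Q\big(\rho_{\mathrm{skew}}(M) - M\big)$ with $M = (R^{-1})\transpose S\transpose Z R^{-1}$, and controls the correction by the crude bound $\|\rho_{\mathrm{skew}}(M) - M\|_{\mathrm{F}} \leq 3\|M\|_{\mathrm{F}}$ --- this is where the constant $3$ comes from. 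You instead re-derive the differential from scratch by writing $Q = (X+S)R(S)^{-1}$ with $R$ the Cholesky factor of $I_p + S\transpose S$, getting $T_S\dot S = \dot S R^{-1} - QH$ with $H = \dot R R^{-1}$ upper triangular, and then extract $H$ exactly from the symmetric identity $H + H\transpose = B$; the resulting estimate $\frobnormsmall{H} \le \tfrac{1}{\sqrt2}\frobnormsmall{B} \le \sqrt2\,\frobnormsmall{S}\frobnormsmall{\dot S}$ is sharper than the paper's treatment of the correction term, so you obtain $\sigmamin(\D\Retr_X(S)) \ge 1 - \sqrt2\,\frobnormsmall{S} - \tfrac12\frobnormsmall{S}^2$, which implies the stated $b = 1 - 3a - \tfrac12 a^2$ a fortiori (and would in fact allow a slightly larger admissible $a$). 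The two differential formulas are of course equivalent --- both come from $\dot Y = \dot Q R + Q\dot R$ --- but your way of resolving the triangular structure buys a better constant, at the cost of having to justify smoothness of the Cholesky factor rather than citing a known formula. Your sphere computation matches the paper's essentially verbatim, including the observation that the minimizing $\dot s$ is the one aligned with $s$.
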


The second result follows from the connection between the differential of the exponential map and certain \emph{Jacobi fields} on $\calM$, together with standard comparison theorems from Riemannian geometry~\citep[Ch.~10, 11]{lee2018riemannian}.
\begin{proposition} \label{prop:mastercorollaryexponetial}
	Let $\calM$, complete,
	have sectional curvature upper bounded by $C$, and let the retraction $\Retr$ be the exponential retraction $\Exp$:
	\begin{itemize}
		\item[] If $C \leq 0$, then~\aref{assu:DRetr} is satisfied for any $a > 0$ and $b = 1$;
		\item[] If $C > 0$, then~\aref{assu:DRetr} is satisfied for any $0 < a < \frac{\pi}{\sqrt{C}}$ and $b = \frac{\sin(a\sqrt{C})}{a\sqrt{C}}$.
	\end{itemize}
\end{proposition}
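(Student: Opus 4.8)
The plan is to bound $\sigmamin(\D\Exp_x(s))$ directly using Jacobi fields, and then optimize the bound over $\|s\| \le a$. Fix $x \in \calM$ and $s \in \T_x\calM$; we may assume $s \neq 0$ since $\D\Exp_x(0) = \Id$. Let $\gamma(t) = \Exp_x(ts)$, a geodesic of constant speed $\|s\|$. The first ingredient is the classical variational identity: for any $\dot s \in \T_x\calM$, differentiating the geodesic variation $(t,\epsilon) \mapsto \Exp_x(t(s + \epsilon\dot s))$ gives $\D\Exp_x(s)[\dot s] = J(1)$, where $J$ is the unique Jacobi field along $\gamma$ with $J(0) = 0$ and $\Ddt J(0) = \dot s$ (see~\citep[Ch.~10]{lee2018riemannian}). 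So it suffices to lower-bound $\|J(1)\|$ in terms of $\|\dot s\|$.

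Next I would split $\dot s = \dot s^{\parallel} + \dot s^{\perp}$ into its components parallel and orthogonal to $s$, and correspondingly $J = J^{\parallel} + J^{\perp}$ by linearity of the Jacobi equation. The parallel part is explicit: $J^{\parallel}(t) = t\,P_t\dot s^{\parallel}$ with $P_t$ parallel transport along $\gamma$ (this solves the Jacobi equation because $R(\gamma',\gamma')\gamma' = 0$), so $J^{\parallel}(1)$ is proportional to $\gamma'(1)$ and $\|J^{\parallel}(1)\| = \|\dot s^{\parallel}\|$. For $J^{\perp}$, the scalar function $t \mapsto \inner{J^{\perp}(t)}{\gamma'(t)}$ is affine and vanishes together with its derivative at $t = 0$, hence $J^{\perp} \perp \gamma'$ identically; in particular $J^{\parallel}(1) \perp J^{\perp}(1)$, so $\|J(1)\|^2 = \|\dot s^{\parallel}\|^2 + \|J^{\perp}(1)\|^2$.

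The heart of the argument is to bound $\|J^{\perp}(1)\|$ from below via the Rauch comparison theorem~\citep[Ch.~11]{lee2018riemannian}, comparing $\calM$ with the simply connected space form of constant curvature $C$. Since $\calM$ has sectional curvature $\le C$, and when $C > 0$ the comparison geodesic (of speed $\|s\| < \pi/\sqrt{C}$) has no conjugate points on $(0,1]$, Rauch yields $\|J^{\perp}(1)\| \ge \frac{\sin(\sqrt{C}\,\|s\|)}{\sqrt{C}\,\|s\|}\|\dot s^{\perp}\|$ when $C > 0$, and $\|J^{\perp}(1)\| \ge \|\dot s^{\perp}\|$ when $C \le 0$ (comparing against flat space). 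The explicit value of the comparison field is exactly the one behind~\eqref{eq:DExpConstantCurvature}. Combining with the parallel part and using $\frac{\sin u}{u} \le 1$, I get $\|\D\Exp_x(s)[\dot s]\|^2 \ge \beta(\|s\|)^2\,\|\dot s\|^2$ where $\beta(\|s\|) = 1$ if $C \le 0$ and $\beta(\|s\|) = \frac{\sin(\sqrt C\,\|s\|)}{\sqrt C\,\|s\|}$ if $C > 0$; that is, $\sigmamin(\D\Exp_x(s)) \ge \beta(\|s\|)$.

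Finally I would conclude. If $C \le 0$ this already gives $\sigmamin \ge 1$ for every $s$, so \aref{assu:DRetr} holds with any $a > 0$ and $b = 1$. If $C > 0$, since $u \mapsto \sin(u)/u$ is nonincreasing on $[0,\pi)$ (because $u\cos u - \sin u \le 0$ there), any $s$ with $\|s\| \le a < \pi/\sqrt C$ satisfies $\beta(\|s\|) \ge \frac{\sin(a\sqrt C)}{a\sqrt C} = b$, which is exactly \aref{assu:DRetr} (in fact uniformly in $x$, hence a fortiori along the iterates). I expect the main obstacle to be invoking Rauch in the correct direction and tracking its hypotheses carefully---in particular that the comparison geodesic has speed $\|s\|$ rather than unit speed, which is precisely what produces the conjugate-point threshold $a < \pi/\sqrt C$; the parallel/orthogonal bookkeeping and the elementary monotonicity estimate are then routine.
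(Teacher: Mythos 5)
Your proposal is correct and follows essentially the same route as the paper: both reduce $\sigmamin(\D\Exp_x(s))$ to a lower bound on the Jacobi field $J$ with $J(0)=0$, $\frac{\D}{\dt}J(0)=\dot s$ along $\gamma(t)=\Exp_x(ts)$, handle the component of $\dot s$ parallel to $s$ explicitly via $J^{\parallel}(t)=tP_{ts}\dot s^{\parallel}$, and control the orthogonal component by comparison with the constant-curvature model (the paper cites the Jacobi field comparison theorem, Thm.~11.9(a) of \citet{lee2018riemannian}, which is the same comparison you invoke under the name Rauch). Your additional bookkeeping (orthogonality of $J^{\parallel}(1)$ and $J^{\perp}(1)$, monotonicity of $\sin u/u$ on $[0,\pi)$ to pass from $\|s\|$ to the uniform constant $b$) is exactly what the paper leaves implicit, so nothing is missing.
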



\section{Solving the subproblem} \label{sec:subproblem}


\TODOF{Parlett, 'A New Look at the Lanczos Algorithm for Solving Symmetric Systems of Linear Equations' has 'old tricks' to do Lanczos right in the face of round-off error; it's rather involved though; it'd be nice to get into that 'at some point'.}

At each iteration, Algorithm~\ref{algo:ARC} requires the approximate minimization of the model $m_k$~\eqref{eq:mk} in the tangent space $\T_{x_k}\calM$. Since the latter is a linear space, this is the same subproblem as in the Euclidean case. In contrast to working simply over $\Rn$ however, one practical difference is that we do not usually have access to a preferred basis for $\T_{x_k}\calM$, so that it is preferable to resort to basis-free solvers. To this end, we describe a Lanczos method as Algorithm~\ref{algo:lanczosoptimized}.

Let us phrase the subproblem in a general context. Given a vector space $\calX$ of dimension $n$ with an inner product $\inner{\cdot}{\cdot}$ (and associated norm $\|\cdot\|$), an element $g \in \calX$, a self-adjoint linear operator $H \colon \calX \rightarrow \calX$ and a real $\varsigma > 0$, define the function $m\colon\mathcal{X} \rightarrow \reals$ as 
\begin{align}
	m(s) = \inner{g}{s} + \frac{1}{2}\inner{s}{H(s)} + \frac{\varsigma}{3}\|s\|^3. 	
	\label{eqn:modelfunctionm}
\end{align}
We wish to compute an element $s \in \calX$ such that
\begin{align}
	m(s) & \leq m(0) & \textrm{ and } & & \|\nabla m(s)\| & \leq \theta \|s\|^2.
	\label{eqn:subsolvercondition}
\end{align}
This corresponds to satisfying condition~\eqref{eq:firstorderprogress}
at iteration $k$, where $\calX = \T_{x_k}\calM$ is endowed with the Riemannian inner product at $x_k$, $g = \nabla \hat f_k(0)$, $H = \nabla^2 \hat f_k(0)$ and $\varsigma = \varsigma_k$. If $g = 0$, then $s = 0$ satisfies the condition: henceforth, we assume $g \neq 0$.

Certainly, a global minimizer of~\eqref{eqn:modelfunctionm} meets our requirements (it would also satisfy the equivalent of the second-order condition~\eqref{eq:secondorderprogress}). Such a minimizer can be computed, but known procedures for this task involve a diagonalization of $H$, which may be expensive. Instead, we use the Lanczos-based method proposed in~\citep[\S6]{cartis2011adaptivecubicPartI}: 
the latter iteratively produces a sequence of orthonormal vectors $\{q_1, \ldots, q_n\}$ and a symmetric tridiagonal matrix $T$ of size $n$ such that~\citep[Lec.~36]{trefethen1997numerical}\footnote{In case of so-called breakdown in the Lanczos iteration at step $k$, we follow the standard procedure which is to generate $q_k$ as a random unit vector orthogonal to $q_1, \ldots, q_{k-1}$, then to proceed as normal. This does not jeopardize the desired properties~\eqref{eqn:lanczosmethod}.\label{footnotelanczos}}
\begin{align}
	q_1 & = \frac{g}{\|g\|} &  \textrm{ and } & & T_{ij} & = \inner{q_i}{H(q_j)} \textrm{ for all } i,j \textrm{ in } 1\ldots n.
	\label{eqn:lanczosmethod}
\end{align}
Let $T_k$ denote the $k \times k$ principal submatrix of $T$: producing $q_1, \ldots, q_k$ and $T_k$ requires exactly $k$ calls to $H$.
%
%
Consider $m(s)$~\eqref{eqn:modelfunctionm} restricted to the subspace spanned by $q_1, \ldots, q_k$:
\begin{align}
	\forall y \in \Rk, \textrm{ with } s & = \sum_{i = 1}^{k} y_i q_i, &   m(s) & = \inner{g}{y_1q_1} + \frac{1}{2} \sum_{i,j=1}^k y_i y_j \inner{q_i}{H(q_j)} + \frac{\varsigma}{3} \|y\|^3 \nonumber\\
		 & & & = y_1 \|g\| + \frac{1}{2} y\transpose T_ky + \frac{\varsigma}{3} \|y\|^3,
		 \label{eq:modelrestrictedlanczos}
\end{align}
where $\|\cdot\|$ is also the 2-norm over $\Rk$.
Since $T_k$ is tridiagonal, it can be diagonalized efficiently. As a result, it is inexpensive to compute a global minimizer of $m(s)$ restricted to the subspace spanned by $q_1, \ldots, q_k$~\citep[\S6.1]{cartis2011adaptivecubicPartI}.
%
Furthermore, since the Lanczos basis is constructed incrementally, we can minimize the restricted cubic at $k = 1$, check the stopping criterion~\eqref{eqn:subsolvercondition}, and proceed to $k = 2$ only if necessary, etc. The hope (borne out in experiments) is that the algorithm stops well before $k$ reaches $n$ (at which point it necessarily succeeds.) In this way, we limit the number of calls to $H$, which is typically the most expensive part of the process. This general strategy was first proposed in the context of trust-region subproblems by~\citet{gould1999solving}. Algorithm~\ref{algo:lanczosoptimized} is set up to target first order progress only. In order to ensure satisfaction of second-order progress, one may have to force the execution of $n$ iterations (which is rarely done in practice).





We draw attention to a technical point. Upon minimizing~\eqref{eq:modelrestrictedlanczos}, we obtain a vector $y \in \Rk$. To check the stopping criterion~\eqref{eqn:subsolvercondition}, we must compute $\|\nabla m(s)\|$, where $s = \sum_{i=1}^k y_i q_i$. Since
\begin{align}
	\nabla m(s) & = g + H(s) + \varsigma \|s\| s,
\end{align}
one approach involves computing $s$ (that is, form the linear combination of $q_i$'s) and applying $H$ to~$s$: both operations may be expensive in high dimension. An alternative (shown in Algorithm~\ref{algo:lanczosoptimized}) is to recognize that, due to the inner workings of Lanczos iterations, $\nabla m(s)$ lies in the subspace spanned by $\{q_1, \ldots, q_{k+1}\}$ (if $k < n$). Explicitly, 
\begin{align}
	\nabla m(s) & = \|g\|q_1 + \sum_{i = 1}^{k+1} (T_{1:k+1,1:k}^{} \, y)_i q_i + \varsigma\|y\| \sum_{i=1}^{k}y_iq_i,
	\label{eqn:subsolvernormofgrad}
\end{align}
where $T_{1:k+1,1:k}$ is the submatrix of $T$ containing the first $k+1$ rows and first $k$ columns.
This expression gives a direct way to compute $\|\nabla m(s)\|$ without forming $s$ and without calling $H$,
simply by running the Lanczos iteration one step ahead.


\begin{remark} \label{rem:carmonduchisubproblem}
	\citet{carmon2018analysis} analyze the number of Lanczos iterations that may be required to reach approximate solutions to the subproblem. For the Euclidean case, they conclude that the overall complexity of ARC to compute an $\varepsilon$-critical point in terms of Hessian-vector products (which dominate the number of cost and gradient computations) is $O(\varepsilon^{-7/4})$. Furthermore, the dependence on the dimension of the search space is only logarithmic. (The logarithmic terms are caused by the need to randomize for the so-called hard case---see the reference for important details in that regard.)
	Their conclusions should extend to the Riemannian setting as well.
%
	\citet{gould2019regularizedquadratic} extend these results to study the decrease of the norm of the gradient specifically, as required here.
\end{remark}

\begin{algorithm}[t]
	\caption{Lanczos-based cubic model subsolver}
	\label{algo:lanczosoptimized}
	\begin{algorithmic}[1]
		\State \textbf{Parameters:} $\theta , \varsigma > 0$, vector space $\calX$ with inner product $\inner{\cdot}{\cdot}$ and norm $\|\cdot\|$
		\State \textbf{Input:} $g \in \calX$ nonzero, a self-adjoint linear operator $H\colon\calX \rightarrow \calX$
		\Statex 
		\State{$k \leftarrow 1$}
		\State{Obtain $q_1, T_1$ via a Lanczos iteration \eqref{eqn:lanczosmethod}}
		\State{Solve $y^{(1)} = \argmin{y \in \reals}{\;\;\|g\|y + \frac{1}{2}T_{1}y^2 + \frac{1}{3}\varsigma |y|^3}$ }
		\State{Obtain $q_2, T_2$ via a Lanczos iteration \eqref{eqn:lanczosmethod}}
		\State{Compute $\|\nabla m(s_1)\|$ via~\eqref{eqn:subsolvernormofgrad}}, where $s_1 = y^{(1)} q_1$
		\Statex
		\While{$\|\nabla m(s_k)\| > \theta \|s_k\|^2$}
		\State{$k \leftarrow k + 1$}
		\State{Solve $y^{(k)} = \argmin{y \in \reals^k}{\;\;\|g\| y_1 + \frac{1}{2}y\transpose T_ky + \frac{1}{3}\varsigma \|y\|^3}$ following \citep[\S6.1]{cartis2011adaptivecubicPartI}}
		\State{Obtain $q_{k+1}, T_{k+1}$ via a Lanczos iteration \eqref{eqn:lanczosmethod}}
		\State{Compute $\|\nabla m(s_k)\|$ via~\eqref{eqn:subsolvernormofgrad}}, where $s_k = \sum_{i = 1}^k y^{(k)}_i q_i^{}$
		\EndWhile
		\Statex
		\State \textbf{Output:} $s_k \in \calX$
	\end{algorithmic}
	
\end{algorithm}


\section{Numerical experiments} \label{sec:XP}

We implement Algorithm~\ref{algo:ARC} within the Manopt framework~\citep{manopt} (our code is part of that toolbox) and compare the performance of our implementation against some existing solvers in that toolbox, namely, the Riemannian trust-region method (RTR)~\citep{genrtr} and the Riemannian conjugate gradients method with Hestenes--Stiefel update formula (CG-HS)~\citep[\S8.3]{AMS08}. All algorithms terminate when $\|\grad f(x_k)\| \leq 10^{-9}$. CG-HS also terminates if it is unable to produce a step of size more than $10^{-10}$. Code to reproduce the experiments is available at \url{https://github.com/NicolasBoumal/arc}. We report results with randomness fixed by \texttt{rng(2019)} within Matlab R2019b.

We consider a suite of six Riemannian optimization problems:
\begin{enumerate}
	\item Dominant invariant subspace: $\max_{X \in \Gr(n, p)} \frac{1}{2} \Trace(X\transpose A X)$, where $A \in \Rnn$ is symmetric (randomly generated from i.i.d.\ Gaussian entries) and $\Gr(n, p)$ is the Grassmann manifold of subspaces of dimension $p$ in $\Rn$, represented by orthonormal matrices in $\Rnp$. Optima correspond to dominant invariant subspaces of $A$~\citep{edelman1998geometry}.
	\item Truncated SVD: $\max_{U\in\St(m, p), V\in\St(n, p)} \Trace(U\transpose A V N)$, where $\St(n, p)$ is the set of matrices in $\Rnp$ with orthonormal columns, $A \in \Rmn$ has i.i.d.\ random Gaussian entries and $N = \diag(p, p-1, \ldots, 1)$. Global optima correspond to the $p$ dominant left and right singular vectors of $A$~\citep{sato2013riemannianSVD}. For this and the previous problem, the random matrices have small eigen or singular value gap, which makes them challenging.
	\item Low-rank matrix completion via optimization on one Grassmann manifold, as in~\citep{boumal2011rtrmc}. The target matrix $A \in \Rmn$ has rank $r$: it is fully specified by $r(m+n-r)$ parameters. We observe this many entries of $A$ picked uniformly at random, times an oversampling factor (osf). The task is to recover $A$ from those samples. $A$ is generated from two random Gaussian factors as $A = LR\transpose$ to have rank $r$ exactly. The variable is $U \in \Gr(m, r)$, and the cost function minimizes the sum of squared errors between $UW_U$ and the observed entries of $A$, where $W_U \in \reals^{r \times n}$ is the optimal matrix for that purpose (which has an explicit expression once $U$ is fixed, efficiently computable).
	\item Max-cut: given the adjacency matrix $A \in \Rnn$ of a graph with $n$ nodes, we solve the semidefinite relaxation of the Max-Cut graph partitioning problem via the Burer--Monteiro formulation~\citep{burer2005local} on the oblique manifold~\citep{journee2010low}: $\min_{X\in\mathrm{OB}(n, p)} \frac{1}{2}\Trace(X\transpose A X)$, where $\mathrm{OB}(n, p)$ is the set of matrices in $\Rnp$ with unit-norm rows. Here, we pick graph \#22 from a collection of graphs called Gset (see any of the references): it has $n = 2000$ nodes and $19990$ edges, and $p$ is set close to $\sqrt{2n}$ as justified in~\citep{boumal2018deterministicbm}.
	\item Synchronization of rotations: $m$ rotation matrices $Q_1, \ldots, Q_m$ in the special orthogonal group $\mathrm{SO}(d)$ are estimated from noisy relative measurements $H_{ij} \approx Q_i^{}Q_j\transpose$ for an Erd\H{o}s--R\'enyi random set of pairs $(i, j)$ following a maximum likelihood formulation, as in~\citep{boumal2013MLE}. The specific distribution of the measurements and the corresponding cost function are described in the reference. All algorithms are initialized with the technique proposed in the reference, to avoid convergence to a poor local optimum.
	\item ShapeFit: least-squares formulation of the problem of recovering a rigid structure of $n$ points $x_1, \ldots, x_n$ in $\Rd$ from noisy measurements of some of the pairwise directions $\frac{x_i - x_j}{\|x_i - x_j\|}$ picked uniformly at random, following~\citep{hand2018shapefit}. The set of points is centered and obeys one extra linear constraint to fix scaling ambiguity, so that the search space is effectively a linear subspace of $\reals^{n\times d}$: this is the manifold $\calM$. The cost function as spelled out in the reference makes this a structured linear least-squares problem.
\end{enumerate}

For each problem, we generate one instance and one random initial guess (except for problem 5 which is initialized deterministically). Then, we run each algorithm from that same initial guess on that same instance. Figure~\ref{fig:XPtimegradnorm} displays the progress of each algorithm on each problem as the gradient norm of iterates (on a log scale) as a function of elapsed computation time to reach each iterate (in seconds) on a laptop from 2016. For the same run, Figure~\ref{fig:XPhesscallsgradnorm} reports the number of gradient calls and Hessian-vector products (summed) issued by all algorithms along the way. Figure~\ref{fig:XPouteriterations} reports the number of outer iterations for ARC and RTR, that is, excluding work done by subsolvers.

For ARC, we report results with $\theta = 0.25$ and $\theta = 2$; this is used in the stopping criterion for subproblem solves following~\eqref{eq:firstorderprogress} (we do not check~\eqref{eq:secondorderprogress}). To initialize $\varsigma_0$, we use 100 divided by the initial trust-region radius of RTR ($\Delta_0$) chosen by Manopt. Other parameters of ARC are set as follows: $\varsigma_{\min} = 10^{-10}, \eta_1 = 0.1, \eta_2 = 0.9, \gamma_1 = 0.1, \gamma_2 = \gamma_3 = 2$, with update rule:
\begin{align}
\varsigma_{k+1} =
\begin{cases}
\begin{aligned}
& \max(\varsigma_{\min}, \gamma_1 \varsigma_k) & & \textrm{ if } \rho_k \geq \eta_2 & & \textrm{ (very successful),} \\
& \varsigma_k & & \textrm{ if } \rho_k \in [\eta_1, \eta_2) & & \textrm{ (successful),} \\
& \gamma_2 \varsigma_k & & \textrm{ if } \rho_k < \eta_1 & & \textrm{ (unsuccessful).}
\end{aligned}
\end{cases}
\end{align}
Standard safeguards to account for numerical round-off errors are included in the code (not described here). Parameters for the other methods have the default values given by Manopt.

We experiment with two subproblem solvers for ARC. The first one is the Lanczos-based method as described in Section~\ref{sec:subproblem} (ARC Lanczos). The second one is a (Euclidean) nonlinear, nonnegative-Polak--Ribi\`ere conjugate gradients method run on the model $m_k$~\eqref{eq:mk} in the tangent space $\T_{x_k}\calM$ (ARC NLCG), implemented by Bryan Zhu~\citep{zhu2019arcthesis}. This solver uses the initialization recommended by~\citet{carmon2016gradient} for gradient descent (and from which they proved convergence to a global optimizer, despite non-convexity of the model), and exact line-search.
We find that this subproblem solver performs well in practice. It is simpler to implement, and uses less memory than the Lanczos method.

\begin{figure}[p]
	\centering
	\includegraphics[width=1\linewidth]{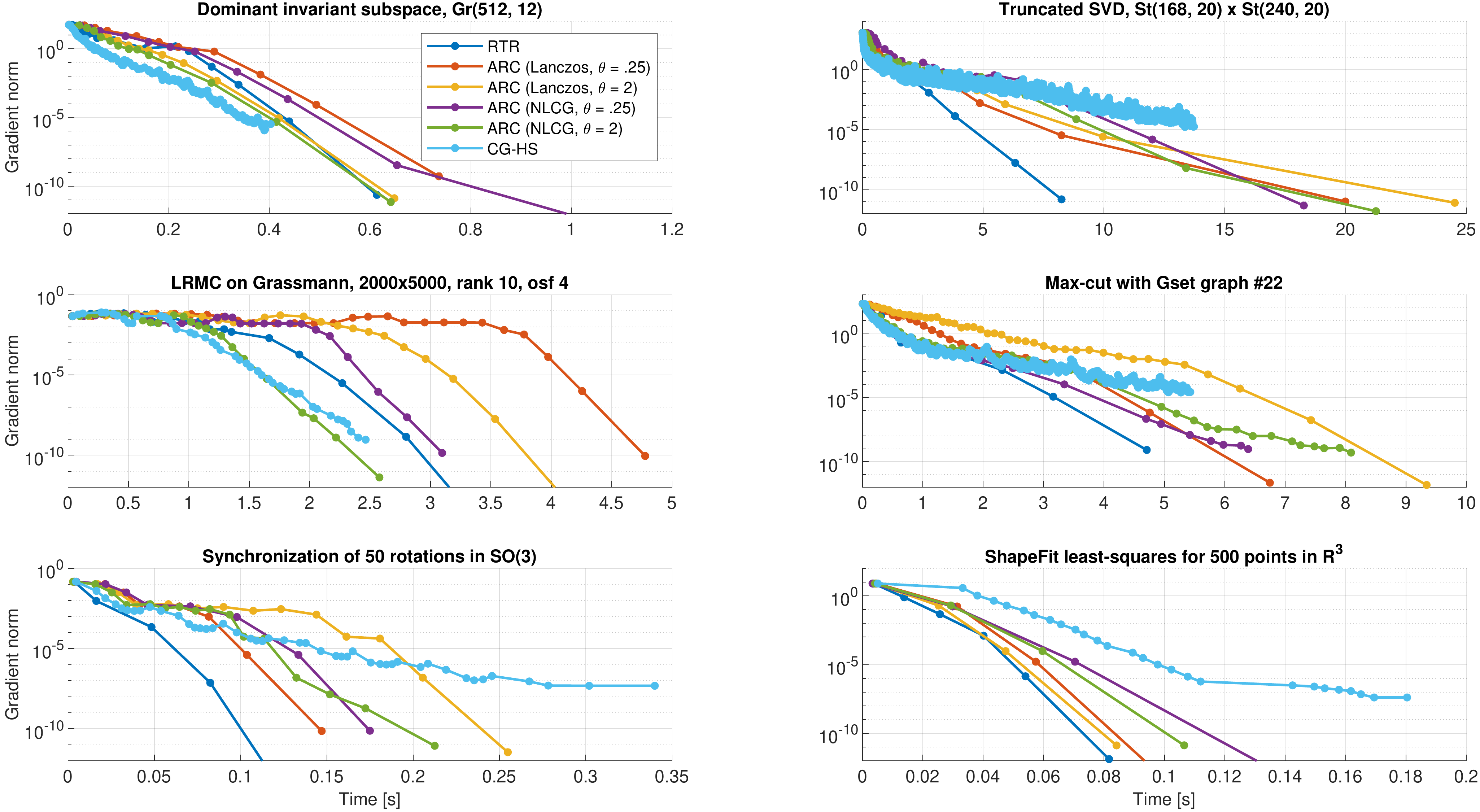}
	\caption{Gradient norm at each iterate for the three competing solvers on the six benchmark problems, as a function of computation time needed by those solvers to reach those iterates (in seconds). Our algorithm is ARC (tested with two different subproblem solvers, each with two parameter settings); RTR is Riemannian trust-regions; CG-HS is Riemannian conjugate gradients with Hestenes--Stiefel step selection.}
	\label{fig:XPtimegradnorm}
\end{figure}
\begin{figure}[p]
	\centering
	\includegraphics[width=1\linewidth]{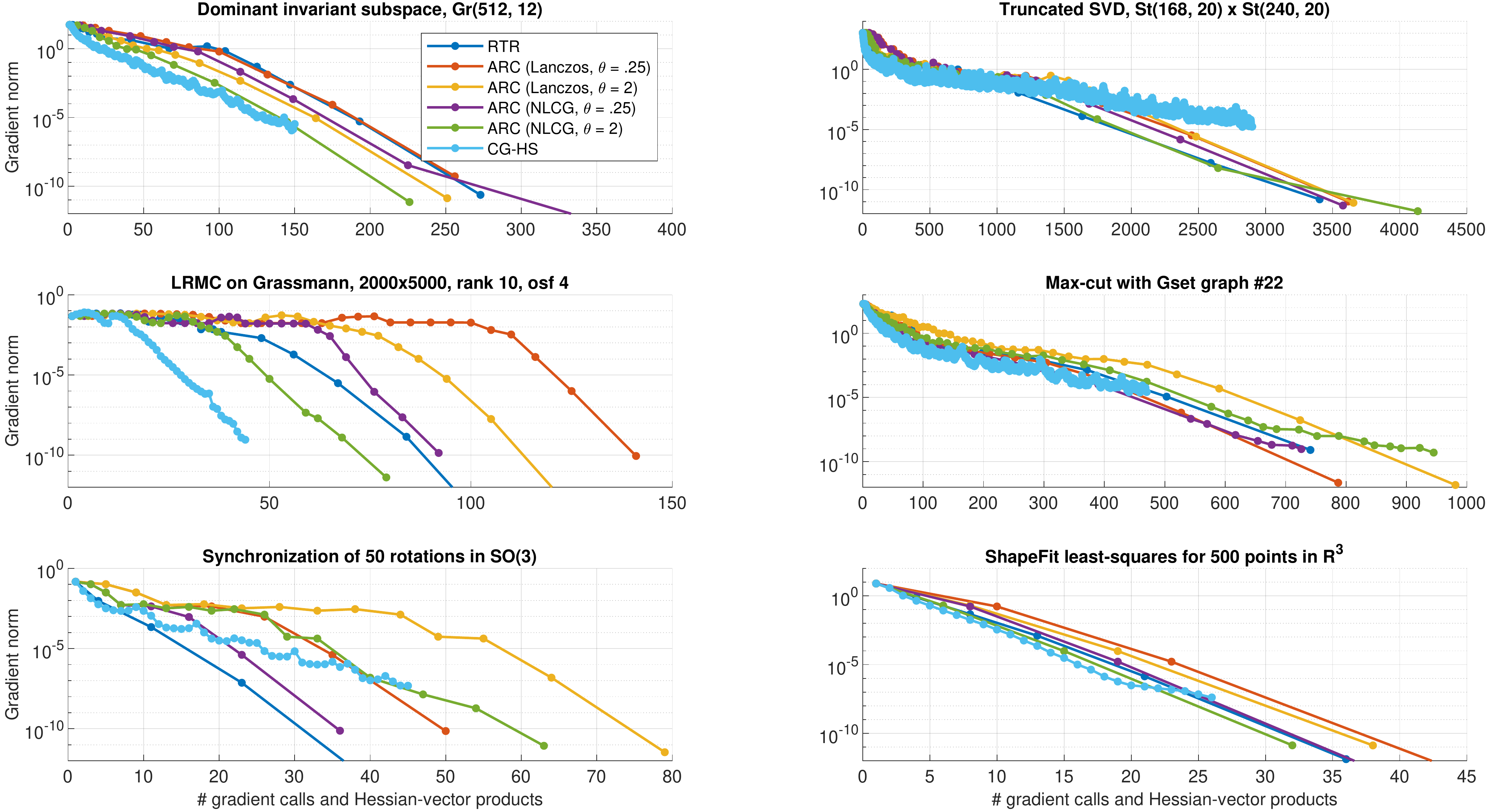}
	\caption{Gradient norm at each iterate, as a function of the number of gradient calls and Hessian-vector calls (the sum of both) issued by those solvers to reach those iterates.}
	\label{fig:XPhesscallsgradnorm}
\end{figure}
\begin{figure}[p]
	\centering
	\includegraphics[width=1\linewidth]{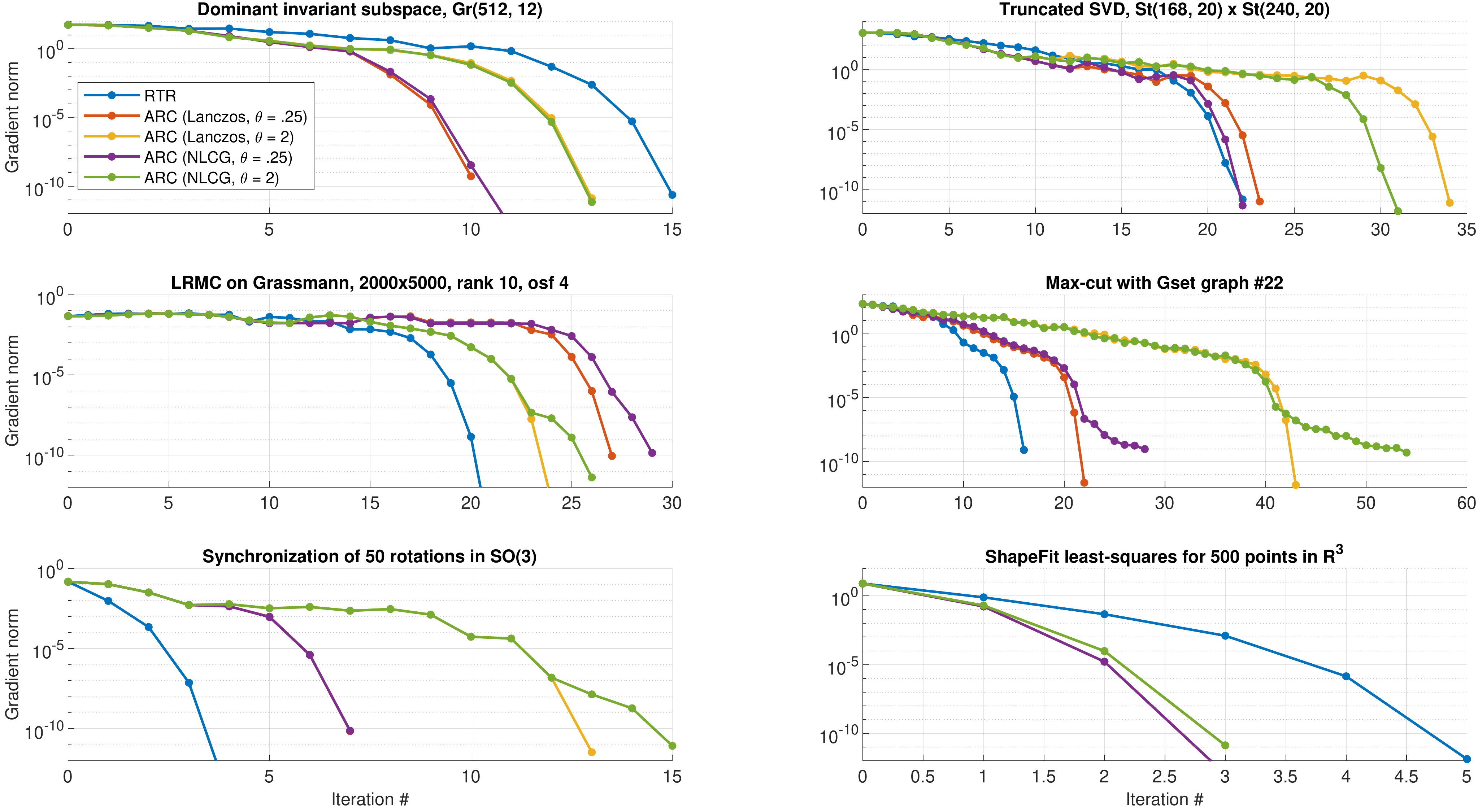}
	\caption{Gradient norm at each iterate, as a function of the number of outer iterations for ARC and RTR: both of these solvers rely on a subproblem solver. This plot compares the behavior of the algorithms separately from their subproblem solvers' work. As a result, this hides effects related to how stringent the stopping criterion of the subproblem solver is, hence of how costly the subproblem solves are. For example, one can (usually) reduce the number of outer iterations of ARC by reducing $\theta$. As the subproblems of RTR and ARC are similar, we expect that (in principle) it should be possible to solve them equally well in about the same time.}
	\label{fig:XPouteriterations}
\end{figure}

We find that ARC's performance is in the same ballpark as RTR's, with the caveat that ARC's best performance requires tuning (choosing the right subproblem solver and $\theta$ for the problem class), whereas RTR is more robust. Since RTR's code has been refined over many years, we expect that further work can help reduce the gap. For example, we expect that the performance of ARC could be improved with further tuning of the regularization parameter update rule. In particular, we find that it is important to reduce regularization fast when close to convergence (but not earlier), to allow ARC to make steps similar to Newton's method. Work by~\citet{gould2012updatingregularization} could be a good starting point for such exploration.





\paragraph{Acknowledgments}
We thank Pierre-Antoine Absil for numerous insightful and technical discussions, Stephen McKeown for directing us to, and guiding us through the relevance of Jacobi fields for our study of~\aref{assu:DRetr}, Chris Criscitiello and Eitan Levin for many discussions regarding regularity assumptions on manifolds, and Bryan Zhu for contributing his nonlinear CG subproblem solver to Manopt, and related discussions.


\bibliographystyle{abbrvnat}
\bibliography{refs}

\clearpage

\appendix 

\section{Proofs from Section~\ref{sec:arc}: mechanical lemmas} \label{app:arc}

Lemma~\ref{lem:termination} characterizes the conditions under which the subproblem solver is allowed to return $s_k = 0$ at iteration $k$.
\begin{proof}[Proof of Lemma~\ref{lem:termination}]
	By definition of the model $m_k$~\eqref{eq:mk} and by properties of retractions~\eqref{eq:nablahatfTgradf},
	\begin{align*}
	\nabla m_k(0) & = \nabla \hat f_k(0) = \grad f(x_k),
	\end{align*}
	where $\hat f_k = f \circ \Retr_{x_k}$.
	Thus, if $\grad f(x_k) = 0$, the first-order condition~\eqref{eq:firstorderprogress} allows $s_k = 0$. The other way around, if $s_k = 0$ is allowed, then $\|\nabla m_k(0)\| = 0$, so that $\grad f(x_k) = 0$.
	
	Now assume the second-order condition~\eqref{eq:secondorderprogress} is enforced. If $s_k = 0$ is allowed, then we already know that $\grad f(x_k) = 0$. Combined with~\eqref{eq:nablatwohatfHessfWzero}, we deduce that
	\begin{align*}
	\nabla^2 m_k(0) = \nabla^2 \hat f_k(0) = \Hess f(x_k),
	\end{align*}
	for any retraction. Then, condition~\eqref{eq:secondorderprogress} at $s_k = 0$ indicates $\nabla^2 m_k(0)$ is positive semidefinite, hence $\Hess f(x_k)$ is positive semidefinite. The other way around, if $\grad f(x_k) = 0$ and $\Hess f(x_k)$ is positive semidefinite, then $\nabla m_k(0) = \grad f(x_k)$ and $\nabla^2 m_k(0) = \Hess f(x_k)$, so that indeed $s_k = 0$ is allowed.
\end{proof}

The two supporting lemmas presented in Section~\ref{sec:arc} follow from the regularization parameter update mechanism of Algorithm~\ref{algo:ARC}. The standard proofs are not affected by the fact we here work on a manifold. We provide them for the sake of completeness.
\begin{proof}[Proof of Lemma~\ref{lem:varsigmamax}]
	 Using the definition of $\rho_k$~\eqref{eq:rhok}, $m_k(0) = f(x_k)$~\eqref{eq:mk} and $m_k(0) - m_k(s_k) \geq 0$ by condition~\eqref{eq:firstorderprogress}: 
	\begin{align*}
		1 - \rho_k & = 1 - \frac{f(x_k) - f(\Retr_{x_k}(s_k))}{m_k(0) - m_k(s_k) + \frac{\varsigma_k}{3} \|s_k\|^3} \leq  \frac{f(\Retr_{x_k}(s_k)) - m_k(s_k) + \frac{\varsigma_k}{3} \|s_k\|^3}{\frac{\varsigma_k}{3} \|s_k\|^3}.
	\end{align*}
	Owing to~\aref{assu:firstorderregularityscalar}, the numerator is upper bounded by $(L/6)\|s_k\|^3$. Hence, $1 - \rho_k \leq \frac{L}{2\varsigma_k}$. If $\varsigma_k \geq \frac{L}{2(1-\eta_2)}$, then $1-\rho_k \leq 1-\eta_2$ so that $\rho_k \geq \eta_2$, meaning step $k$ is very successful. The regularization mechanism~\eqref{eq:varsigmaupdate} then ensures $\varsigma_{k+1} \leq \varsigma_k$.
	Thus, $\varsigma_{k+1}$ may exceed $\varsigma_k$ only if $\varsigma_k < \frac{L}{2(1-\eta_2)}$, in which case it can grow at most to $\frac{L\gamma_3}{2(1-\eta_2)}$, but cannot grow beyond that level in later iterations.
%
\end{proof}

\begin{proof}[Proof of Lemma~\ref{lem:boundKsuccessfulsteps}]
	Partition iterations $0, \ldots, \bar k - 1$ into successful or very successful ($\calS_{\bar k}$) and unsuccessful ($\calU_{\bar k}$) ones.
	Following the update mechanism~\eqref{eq:varsigmaupdate}, for $k \in \calS_{\bar k}$, $\varsigma_{k+1} \geq \gamma_1 \varsigma_k$, while for $k \in \calU_{\bar k}$, $\varsigma_{k+1} \geq \gamma_2 \varsigma_k$. Thus, by induction, $\varsigma_{\bar k} \geq \varsigma_0 \gamma_1^{|\calS_{\bar k}|} \gamma_2^{|\calU_{\bar k}|}$. By assumption, $\varsigma_{\bar k} \leq \varsigma_{\max}$ so that
	\begin{align*}
		\log\left(\frac{\varsigma_{\max}}{\varsigma_0}\right) \geq |\calS_{\bar k}| \log(\gamma_1) + |\calU_{\bar k}| \log(\gamma_2) = |\calS_{\bar k}|\left[\log(\gamma_1) - \log(\gamma_2)\right] + \bar k \log(\gamma_2),
	\end{align*}
	where we also used $|\calS_{\bar k}| + |\calU_{\bar k}| = \bar k$. Isolating $\bar k$ using $\gamma_2 > 1 > \gamma_1$ 
	allows to conclude.
\end{proof}

\section{Proofs from Section~\ref{sec:firstorderanalysisexp}: first-order analysis, exponentials} \label{app:firstorderexp}

Certain tools from Riemannian geometry are useful throughout the appendices---see for example \cite[pp59--67]{oneill}. To fix notation, let $\nabla$ denote the Riemannian connection on $\calM$ (not to be confused with $\nabla$ and $\nabla^2$ which denote gradient and Hessian of functions on linear spaces, such as pullbacks). With this notation, the Riemannian Hessian~\cite[Def.~5.5.1]{AMS08} is defined by $\Hess f = \nabla \grad f$. Furthermore, $\frac{\D}{\dt}$ denotes the covariant derivative of vector fields along curves on $\calM$, induced by $\nabla$. With this notation, given a smooth curve $c \colon \reals \to \calM$, the intrinsic acceleration is defined as $c''(t) = \frac{\D^2}{\dt^2} c(t)$. For example, for a Riemannian submanifold of a Euclidean space, $c''(t)$ is obtained by orthogonal projection of the classical acceleration of $c$ in the embedding space to the tangent space at $c(t)$. Geodesics are those curves which have zero intrinsic acceleration.


We first state and prove a partial version of Proposition~\ref{prop:LipschitzHessianBounds} which applies for general retractions. Right after this, we prove Proposition~\ref{prop:LipschitzHessianBounds}. The purpose of this detour is to highlight how crucial properties of geodesics and of their interaction with parallel transports allow for the more direct guarantees of Section~\ref{sec:firstorderanalysisexp}. In turn, this serves as motivation for the developments in Section~\ref{sec:firstorderanalysisgeneral}.
\begin{proposition} \label{prop:LipschitzHessianBoundsretraction}
	Let $f \colon \calM \to \reals$ be twice differentiable on a Riemannian manifold $\calM$ equipped with a retraction $\Retr$. Given $(x, s) \in \T\calM$, assume there exists $L \geq 0$ such that, for all $t \in [0, 1]$,
	\begin{align*}
		\left\| P_{t s}^{-1}\!\left(\Hess f(c(t))[c'(t)]\right) - \Hess f(x)[s] \right\| & \leq L \|s\| \cdot \ell(c|_{[0, t]}),
	\end{align*}
	where $P_{ts}$ is parallel transport along $c(t) = \Retr_x(ts)$ from $c(0)$ to $c(t)$ (note the retraction instead of the exponential) 
	and $\ell(c|_{[0, t]}) = \int_{0}^{t} \|c'(\tau)\| \dtau$ is the length of $c$ restricted to the interval $[0, t]$. Then,
	\begin{align*}
		\left\| P_{s}^{-1}\grad f(\Retr_x(s)) - \grad f(x) - \Hess f(x)[s] \right\| & \leq L \|s\| \int_{0}^{1} \ell(c|_{[0, t]}) \,\dt.
	\end{align*}	
\end{proposition}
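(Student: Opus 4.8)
The plan is to mimic the classical Euclidean argument that recovers a first-order Taylor bound for the gradient from a Lipschitz bound on the Hessian, but carried out along the retraction curve $c(t) = \Retr_x(ts)$ and with parallel transport inserted wherever tangent vectors living at $c(t)$ need to be compared with tangent vectors living at $x = c(0)$. The natural object to differentiate is the vector field $t \mapsto P_{ts}^{-1}\grad f(c(t))$, which lives entirely in the single tangent space $\T_x\calM$, so ordinary one-variable calculus on a fixed linear space applies to it.

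First I would establish the key derivative identity: for the vector field $Y(t) = \grad f(c(t))$ along $c$, the covariant derivative is $\frac{\D}{\dt} Y(t) = \nabla_{c'(t)} \grad f = \Hess f(c(t))[c'(t)]$, using $\Hess f = \nabla \grad f$. Since parallel transport is the solution operator of the covariant-derivative ODE, one has the standard commutation $\frac{\mathrm{d}}{\mathrm{d}t}\left( P_{ts}^{-1} Y(t) \right) = P_{ts}^{-1}\!\left( \frac{\D}{\dt} Y(t) \right) = P_{ts}^{-1}\!\left( \Hess f(c(t))[c'(t)] \right)$, an identity in the fixed linear space $\T_x\calM$. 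Integrating from $0$ to $1$ and using $P_{0} = \Id$, $c(0) = x$, $c(1) = \Retr_x(s)$ gives
\begin{align*}
	P_s^{-1}\grad f(\Retr_x(s)) - \grad f(x) = \int_0^1 P_{ts}^{-1}\!\left( \Hess f(c(t))[c'(t)] \right) \dt.
\end{align*}
Next I would subtract the ``predicted'' value: write $\grad f(x) + \Hess f(x)[s] - \grad f(x) = \Hess f(x)[s] = \int_0^1 \Hess f(x)[s]\,\dt$, so that
\begin{align*}
	P_s^{-1}\grad f(\Retr_x(s)) - \grad f(x) - \Hess f(x)[s] = \int_0^1 \left( P_{ts}^{-1}\!\left( \Hess f(c(t))[c'(t)] \right) - \Hess f(x)[s] \right) \dt.
\end{align*}
Then I take norms, pass the norm inside the integral by the triangle inequality, and apply the hypothesis pointwise in $t$: the integrand is bounded by $L\|s\| \cdot \ell(c|_{[0,t]})$. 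Integrating this bound over $t \in [0,1]$ yields exactly $L\|s\| \int_0^1 \ell(c|_{[0,t]})\,\dt$, which is the claimed inequality.

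The only genuinely delicate point is the commutation identity $\frac{\mathrm{d}}{\mathrm{d}t}\left(P_{ts}^{-1} Y(t)\right) = P_{ts}^{-1}\left(\frac{\D}{\dt}Y(t)\right)$: one must be careful that $P_{ts}$ here means transport along $c$ from $c(0)$ to $c(t)$ (so it is a genuine one-parameter family with $P_{ts}^{-1}$ well-defined and smooth in $t$), and that its defining property — parallel fields have vanishing covariant derivative, hence $\frac{\D}{\dt}\big(P_{ts} w\big) = 0$ for fixed $w \in \T_x\calM$ — translates, by expressing $Y(t) = P_{ts}\big(P_{ts}^{-1}Y(t)\big)$ and applying the Leibniz rule for $\frac{\D}{\dt}$, into the stated formula. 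Everything else is elementary single-variable calculus in a fixed finite-dimensional inner-product space, plus the fact that parallel transport is a linear isometry so norms are preserved under $P_{ts}^{-1}$ wherever convenient. Note this argument never uses that $c$ is a geodesic — it works for an arbitrary retraction — which is precisely the contrast with Proposition~\ref{prop:LipschitzHessianBounds}, where the extra structure (the velocity field of a geodesic is parallel, so $c'(t) = P_{ts} s$ and $\ell(c|_{[0,t]}) = t\|s\|$) collapses the double integral to the cleaner $\frac{L}{2}\|s\|^2$.
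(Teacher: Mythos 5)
Your proposal is correct and follows essentially the same route as the paper: the paper justifies the commutation identity $\frac{\mathrm{d}}{\mathrm{d}t}\bigl(P_{ts}^{-1}\grad f(c(t))\bigr) = P_{ts}^{-1}\bigl(\Hess f(c(t))[c'(t)]\bigr)$ by expanding $\grad f(c(t))$ in a parallel frame along $c$, which is just an explicit version of your Leibniz-rule argument, and then integrates, subtracts $\Hess f(x)[s]$, and bounds exactly as you do. No gaps.
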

\begin{proof}
	Pick a basis $v_1, \ldots, v_d$ for $\T_x\calM$, and define the parallel vector fields $V_i(t) = P_{ts}(v_i)$ along $c(t)$. Since parallel transport is an isometry, $V_1(t), \ldots, V_d(t)$ form a basis for $\T_{c(t)}\calM$ for each $t \in [0, 1]$. As a result, we can express the gradient of $f$ along $c(t)$ in these bases,
	\begin{align}
		\grad f(c(t)) & = \sum_{i = 1}^{d} \alpha_i(t) V_i(t),
		\label{eq:gradfctexpansion}
	\end{align}
	with $\alpha_1(t), \ldots, \alpha_d(t)$ differentiable. Using properties of the Riemannian connection $\nabla$ and its associated covariant derivative $\Ddt$~\cite[pp59--67]{oneill}, we find on one hand that
	\begin{align*}
		\Ddt \grad f(c(t)) & = \nabla_{c'(t)} \grad f = \Hess f(c(t))[c'(t)],
	\end{align*}
	and on the other hand that
	\begin{align*}
		\Ddt \sum_{i = 1}^{d} \alpha_i(t) V_i(t) & = \sum_{i = 1}^{d} \alpha_i'(t) V_i(t) = P_{ts} \sum_{i = 1}^{d} \alpha_i'(t) v_i,
	\end{align*}
	where we used that $\Ddt V_i(t) = 0$, by definition of parallel transport.
	Furthermore,
	\begin{align*}
		c'(t) = \D\Retr_x(ts)[s] = T_{ts}(s),
	\end{align*}
	where $T_{ts} = \D\Retr_x(ts)$ is a linear operator from the tangent space at $x$ to the tangent space at $c(t)$---just like $P_{ts}$.
	Combining, we deduce that
	\begin{align*}
		\sum_{i = 1}^d \alpha_i'(t) v_i & = \left(P_{ts}^{-1} \circ \Hess f(c(t)) \circ T_{ts}\right)\![s].
	\end{align*}
	Going back to~\eqref{eq:gradfctexpansion}, we also see that
	\begin{align*}
		G(t) & \triangleq P_{ts}^{-1} \grad f(c(t)) = \sum_{i = 1}^{d} \alpha_i(t) v_i
	\end{align*}
	is a map from (a subset of) $\reals$ to $\T_x\calM$---two linear spaces---so that we can differentiate it in the usual way:
	\begin{align*}
		G'(t) = \sum_{i = 1}^{d} \alpha_i'(t) v_i.
	\end{align*}
	We conclude that
	\begin{align}
		G'(t) = \ddt\!\left[ P_{ts}^{-1} \grad f(c(t)) \right] & = \left(P_{ts}^{-1} \circ \Hess f(c(t)) \circ T_{ts}\right)\![s].
		\label{eq:ddtptsinvgradfct}
	\end{align}
	Since $G'$ is continuous,
	\begin{align*}
		P_{ts}^{-1}\grad f(c(t)) = G(t) & = G(0) + \int_{0}^{t} G'(\tau) \dtau \\
										& = \grad f(x) + \int_{0}^{t} \left(P_{\tau s}^{-1} \circ \Hess f(c(\tau)) \circ T_{\tau s}\right)\![s]  \dtau.
	\end{align*}
	Moving $\grad f(x)$ to the left-hand side and subtracting $\Hess f(x)[ts]$ on both sides, we find
	\begin{align*}
		P_{ts}^{-1}\grad f(c(t)) - \grad f(x) - \Hess f(x)[ts] & = \int_{0}^{t} \left(P_{\tau s}^{-1} \circ \Hess f(c(\tau)) \circ T_{\tau s} - \Hess f(x)\right)\![s]  \dtau.
	\end{align*}
	Using the main assumption on $\Hess f$ along $c$, it easily follows that
	\begin{align}
		\left\| P_{ts}^{-1}\grad f(c(t)) - \grad f(x) - \Hess f(x)[ts] \right\| & \leq \|s\| L \int_{0}^{t} \ell(c|_{[0, \tau]}) \dtau.
		\label{eq:step2normboundLipschitzHessiangeneral}
	\end{align}
	For $t = 1$, this is the announced inequality.
\end{proof}

\begin{proof}[Proof of Proposition~\ref{prop:LipschitzHessianBounds}]
	In this proposition we work with the exponential retraction, so that instead of a general retraction curve $c(t)$ we work along a geodesic $\gamma(t) = \Exp_x(ts)$. By definition, the velocity vector field $\gamma'(t)$ of a geodesic $\gamma(t)$ is parallel, meaning
	\begin{align}
		\gamma'(t) = P_{ts}(\gamma'(0)) = P_{ts}(s).
		\label{eq:gammaprimePt}
	\end{align}
	This elegant interplay of geodesics and parallel transport is crucial.
	In particular, 
	\begin{align*}
		 \ell(\gamma|_{[0, t]}) = \int_{0}^{t} \|\gamma'(\tau)\| \dtau = t \|s\|,
	\end{align*}
	and the condition in Proposition~\ref{prop:LipschitzHessianBoundsretraction} becomes
	\begin{align*}
		\left\| P_{t s}^{-1}\!\left(\Hess f(\gamma(t))[P_{ts}(s)]\right) - \Hess f(x)[s] \right\| & \leq t L \|s\|^2,
	\end{align*}
	which is indeed guaranteed by our own assumptions. We deduce that~\eqref{eq:step2normboundLipschitzHessiangeneral} holds:
	\begin{align}
		\left\| P_{ts}^{-1}\grad f(\gamma(t)) - \grad f(x) - \Hess f(x)[ts] \right\| & \leq \|s\| L \int_{0}^{t} \ell(\gamma|_{[0, \tau]}) \dtau = \frac{L}{2} \|s\|^2 t^2.
		\label{eq:step2normboundLipschitzHessian}
	\end{align}
	The relation~\eqref{eq:gammaprimePt} also yields the scalar inequality. Indeed, since $f \circ \gamma \colon [0, 1] \to \reals$ is continuously differentiable,
	\begin{align*}
	f(\Exp_x(s)) = f(\gamma(1)) & = f(\gamma(0)) + \int_{0}^{1} (f \circ \gamma)'(t) \dt \\
							& = f(x) + \int_{0}^{1} \inner{\grad f(\gamma(t))}{\gamma'(t)} \dt \\
							& = f(x) + \int_{0}^{1} \inner{P_{ts}^{-1}\grad f(\gamma(t))}{s} \dt,
	\end{align*}
	where on the last line we used~\eqref{eq:gammaprimePt} and the fact that $P_{ts}$ is an isometry.
	For a general retraction curve $c(t)$, instead of $s$ as the right-most term we would find $P_{ts}^{-1}(c'(t))$ which may vary with $t$: this would make the next step significantly more difficult.
	Move $f(x)$ to the left-hand side and subtract terms on both sides to get
	\begin{multline*}
		f(\Exp_x(s)) - f(x) - \inner{\grad f(x)}{s} - \frac{1}{2} \inner{s}{\Hess f(x)[s]} \\ = \int_{0}^{1} \inner{P_{ts}^{-1}\grad f(\gamma(t)) - \grad f(x) - \Hess f(x)[ts]}{s} \dt.
	\end{multline*}
	Using~\eqref{eq:step2normboundLipschitzHessian} and Cauchy--Schwarz, it follows immediately that
	\begin{align*}
	\left| f(\Exp_x(s)) - f(x) - \inner{s}{\grad f(x)} - \frac{1}{2} \inner{s}{\Hess f(x)[s]} \right| \leq \int_{0}^{1} \frac{L}{2} \|s\|^3 t^2 \dt = \frac{L}{6} \|s\|^3,
	\end{align*}
	as announced.
\end{proof}

Next, we provide an argument for the last claim in Theorem~\ref{thm:masterboundSLipschitzHessian}.
\begin{proof}[Proof of Theorem~\ref{thm:masterboundSLipschitzHessian}]
	We argue that $\lim_{k \to \infty} \|\grad f(x_k)\| = 0$. The first claim of the theorem states that, for every $\varepsilon > 0$, there is a finite number of successful steps $k$ such that $x_{k+1}$ has gradient larger than $\varepsilon$. Thus, for any $\varepsilon > 0$, there exists $K$: the last successful step such that $x_{K+1}$ has gradient larger than $\varepsilon$. Furthermore, there is a finite number of unsuccessful steps directly after $K+1$. Indeed, $\varsigma_{K+1} \geq \varsigma_{\min}$, and failures increase $\varsigma$ exponentially; additionally, $\varsigma$ cannot outgrow $\varsigma_{\max}$ by Lemma~\ref{lem:varsigmamax}. Thus, after a finite number of failures, a new success arises, necessarily producing an iterate with gradient norm at most $\varepsilon$ since $K$ was the last successful step to produce a larger gradient. By the same argument, all subsequent iterates have gradient norm at most $\varepsilon$. In other words: for any $\varepsilon > 0$, there exists $K'$ finite such that for all $k \geq K'$, $\|\grad f(x_k)\| \leq \varepsilon$, that is: $\lim_{k\to\infty} \|\grad f(x_k)\| = 0$.
\end{proof}



\section{Proofs from Section~\ref{sec:secondorder}: second-order analysis} \label{app:secondorder}

\begin{proof}[Proof of Corollary~\ref{cor:mastercorollarysecond}]
	Consider these subsets of the set of successful iterations $\calS$: 
	\begin{align*}
		\calS^1 & \defeq \{k \in \calS : \|\grad f(x_{k + 1})\| > \varepsilon_g\}, & & \text{ and } & \calS^2 & \defeq \{k \in \calS : \lambda_{\min}(\Hess f(x_{k})) < - \varepsilon_{{H}}\}.
	\end{align*}
	These sets are finite: for $K_1 = K_1(\varepsilon_g)$ as provided by either Theorem~\ref{thm:masterboundSLipschitzHessian} or Theorem~\ref{thm:masterboundSLipschitzHessianretraction}, and for $K_2 = K_2(\varepsilon_{{H}})$ as provided by Theorem~\ref{thm:masterboundSsecond}, we know that
	\begin{align*}
	|\calS^1| & \leq K_1, & & \textrm{ and } & |\calS^2| & \leq K_2.
	\end{align*}
	Note that successful steps are in one-to-one correspondence with the distinct points in the sequence of iterates $x_1, x_2, x_3, \ldots$\footnote{This is true because the cost function is strictly decreasing when successful, so that any $x_k$ can only be repeated in one contiguous subset of iterates. Hence, if $k$ is a successful iteration, match it to $x_{k+1}$ (this is why we omitted $x_0$ from the list.)} The first inequality states at most $K_1$ of the distinct points in that list have large gradient. The second inequality states at most $K_2$ of the distinct points in that same list have significantly negative Hessian eigenvalues. Thus, if more than $K_1+K_2+1$ distinct points appear among $x_0, x_1, \ldots, x_{\bar k}$ (note the $+1$ as we added $x_0$ to the list),
	then at least one of these points has both a small gradient and an almost positive semidefinite Hessian.
	%
	In particular, as long as the number of successful iterations among $0, \ldots, \bar k-1$ exceeds $K_1 + K_2 + 1$ (strictly),
	there must exist $k \in \{0, \ldots, \bar k\}$ such that
	\begin{align*}
	\|\grad f(x_{k})\| & \leq \varepsilon_g & & \text{ and } & \lambda_{\min}(\Hess f(x_{k})) & \geq - \varepsilon_{{H}}.
	\end{align*} 
	Lemma~\ref{lem:boundKsuccessfulsteps} allows to conclude. 
	%
	%
\end{proof}

\section{Proofs from Section~\ref{sec:regularity}: regularity assumptions} \label{app:regularity}
\begin{proof}[Proof of Lemma~\ref{lem:lipschitzhessianbasic}]
	Since $\hat f$ is a real function on a linear space, standard calculus applies:
	\begin{align*}
		\hat f(s) - \left[ \hat f(0) + \innersmall{s}{\nabla \hat f(0)} + \frac{1}{2} \innersmall{s}{\nabla^2 \hat f(0)[s]} \right] & = \int_{0}^{1}\!\int_{0}^{1} \tauort_1 \inner{\left[ \nabla^2 \hat f(\tauort_1\tauort_2 s) - \nabla^2 \hat f(0) \right]\![s]}{s}  \mathrm{d}\tauort_1 \mathrm{d}\tauort_2, \\
		\nabla \hat f(s) - \left[\nabla \hat f(0) + \nabla^2 \hat f(0)[s]\right] & = \int_{0}^{1} \left[ \nabla^2 \hat f(\tauort s) - \nabla^2 \hat f(0) \right]\![s] \, \mathrm{d}\tauort.
	\end{align*}
	Taking norms on both sides, by a triangular inequality to pass the norm through the integral and integrating respectively $\tauort_1^2 \tauort_2^{}$ and $\tauort$, we find using our main assumption~\eqref{eq:lipschitzhessianbasic} that
	\begin{align*}
		\left| \hat f(s) - \left[ \hat f(0) + \innersmall{s}{\nabla \hat f(0)} + \frac{1}{2} \innersmall{s}{\nabla^2 \hat f(0)[s]} \right] \right| & \leq \frac{1}{6} L \|s\|^3, \textrm{ and } \\
		\left\| \nabla \hat f(s) - \left[\nabla \hat f(0) + \nabla^2 \hat f(0)[s]\right] \right\| & \leq \frac{1}{2} L \|s\|^2. \qedhere
	\end{align*}
\end{proof}

\begin{proof}[Proof of Lemma~\ref{lem:derivativespullback}]
For an arbitrary $\dot s \in \T_x\calM$, consider the curve $c(t) = \Retr_x(s + t\dot s)$, and let $g = f \circ c \colon \reals \to \reals$. We compute the derivatives of $g$ in two different ways. On the one hand, $g(t) = \hat f(s + t\dot s)$ so that
	\begin{align*}
		g'(t) & = \D\hat f(s+t\dot s)[\dot s] = \innersmall{\nabla \hat f(s+t\dot s)}{\dot s}, \\
		g''(t) & = \inner{ \ddt \nabla \hat f(s+t\dot s)}{\dot s} = \innersmall{\nabla^2 \hat f(s+t\dot s)[\dot s]}{\dot s}.
	\end{align*}
	On the other hand, $g(t) = f(c(t))$ so that, using properties of $\frac{\D}{\mathrm{d}t}$~\cite[pp59--67]{oneill}:
	\begin{align*}
		g'(t)  & = \D f(c(t))[c'(t)] = \innersmall{\grad f(c(t))}{c'(t)}, \\
		g''(t) & = \ddt \inner{(\grad f \circ c)(t))}{c'(t)} \\
		       & = \inner{\nabla_{c'(t)} \grad f}{c'(t)} + \inner{(\grad f \circ c)(t))}{\frac{\D}{\dt} c'(t)} \\
		 	   & = \inner{\Hess f(c(t))[c'(t)]}{c'(t)} + \inner{\grad f(c(t))}{c''(t)}.
	\end{align*}
%
	Equating the different identities for $g'(t)$ and $g''(t)$ at $t = 0$ while using $c'(0) = T_s \dot s$, we find for all $\dot s \in \T_x\calM$:
	\begin{align*}
		\innersmall{\nabla \hat f(s)}{\dot s} & = \inner{\grad f(\Retr_x(s))}{T_s \dot s}, \\
		\innersmall{\nabla^2 \hat f(s)[\dot s]}{\dot s} & = \inner{\Hess f(\Retr_x(s))[T_s \dot s]}{T_s \dot s} + \inner{\grad f(\Retr_x(s))}{c''(0)}.
	\end{align*}
	The last term, $\inner{\grad f(\Retr_x(s))}{c''(0)}$, is seen to be the difference of two quadratic forms in $\dot s$, so that it is itself a quadratic form in $\dot s$. This justifies the definition of $W_s$ through polarization.
	The announced identities follow by identification.
\end{proof}

\begin{proof}[Proof of Proposition~\ref{prop:sphereretrnice}]
	With $\Retr_x(s) = \frac{x+s}{\sqrt{1+\|s\|^2}}$, it is easy to derive
	\begin{align}
		T_s \dot s \triangleq \D\Retr_x(s)[\dot s] & = \left[ \frac{1}{\sqrt{1+\|s\|^2}} I_n - \frac{1}{\sqrt{1+\|s\|^2}^3} (x+s)s\transpose \right] \dot s \nonumber \\
		& = \frac{1}{\sqrt{1+\|s\|^2}} \left[ I_n - \Retr_x(s) \Retr_x(s)\transpose \right] \dot s,
		\label{eq:DretrSphere}
	\end{align}
	where we used $x\transpose \dot s = 0$ in between the two steps to replace $s\transpose$ with $(x+s)\transpose$.
	The matrix between brackets is the orthogonal projector from $\Rn$ to $\T_{\Retr_x(s)}\calM$. Thus, its singular values are upper bounded by 1. Since $T_s$ is an operator on $\T_x\calM \subset \Rn$, 
	\begin{align*}
		\opnorm{T_s} & \leq \frac{1}{\sqrt{1+\|s\|^2}} \leq 1.
	\end{align*}
	This secures the first property with $c_1 = 1$.
	
	For the second property, consider $U(t) = T_{ts} \dot s$ and
	\begin{align*}
		 U'(t) \triangleq \frac{\D}{\mathrm{d}t} U(t) = \Proj_{c(t)} \ddt U(t),
	\end{align*}
	where $\Proj_y(v) = v - y (y\transpose v)$ is the orthogonal projector to $\T_y\calM$ and $c(t) = \Retr_x(ts)$. Define $g(t) = \frac{1}{\sqrt{1+t^2\|s\|^2}}$. Then, from~\eqref{eq:DretrSphere}, we have
	\begin{align}
		U(t) & = \left[ g(t) I_n - t g(t)^3 (x+ts)s\transpose \right] \dot s.
	\end{align}
	This is easily differentiated in the embedding space $\Rn$:
	\begin{align*}
		\ddt U(t) & = \left[ g'(t) I_n - (t g(t)^3)' (x+ts)s\transpose - t g(t)^3 ss\transpose \right] \dot s.
	\end{align*}
	The projection at $c(t)$ zeros out the middle term, as it is parallel to $x+ts$. This offers a simple expression for $U'(t)$, where in the last equality we use $g'(t) = -t g(t)^3 \|s\|^2$:
	\begin{align*}
		U'(t) & = \Proj_{c(t)}\left( \left[ g'(t) I_n - t g(t)^3 ss\transpose \right] \dot s \right) = -t g(t)^3 \cdot  \Proj_{c(t)}\left( \left[ \|s\|^2 I_n + ss\transpose \right] \dot s \right).
	\end{align*}
	The norm can only decrease after projection, so that, for $t \in [0, 1]$, 
	\begin{align*}
		\|U'(t)\| & \leq 2 t g(t)^3 \|s\|^2 \|\dot s\|.
	\end{align*}
	Let $h(t) = 2 t g(t)^3 \|s\|^2 = \frac{2t\|s\|^2}{(1+t^2 \|s\|^2)^{1.5}}$. For $s = 0$, $h$ is identically zero. Otherwise, $h$ attains its maximum $h\left(t = \frac{1}{\sqrt{2} \|s\|}\right) = \frac{4 \sqrt{3}}{9} \|s\|$. It follows that $\|U'(t)\| \leq c_2 \|s\| \|\dot s\|$ for all $t \in [0, 1]$ with $c_2 = \frac{4\sqrt{3}}{9}$.
	
	Finally, we establish the last property. Given $s, \dot s \in \T_x\calM$, consider $c(t) = \Retr_x(s + t\dot s)$. Simple calculations yield:
	\begin{align}
		c'(t) & = \ddt c(t) = \frac{1}{\sqrt{1 + \|s + t\dot s\|^2}} \left[ \dot s - \inner{\dot s}{c(t)} c(t) \right] = \frac{1}{\sqrt{1 + \|s + t\dot s\|^2}} \Proj_{c(t)} \dot s.
		\label{eq:ddtRsphere}
	\end{align}
	This is indeed in the tangent space at $c(t)$. The classical derivative of $c'(t)$ is given by
	\begin{align*}
		\ddt c'(t) & = - \frac{1}{\sqrt{1 + \|s + t\dot s\|^2}} \left[ \inner{\dot s}{c'(t)} c(t) + \inner{\dot s}{c(t)} c'(t) + \frac{\inner{s + t\dot s}{\dot s}}{1 + \|s + t \dot s\|^2} \Proj_{c(t)} \dot s \right] \\
			& = - \frac{1}{\sqrt{1 + \|s + t\dot s\|^2}} \left[ \inner{\dot s}{c'(t)} c(t) + 2\frac{\inner{s + t\dot s}{\dot s}}{1 + \|s + t \dot s\|^2} \Proj_{c(t)} \dot s \right],
	\end{align*}
	where we used~\eqref{eq:ddtRsphere} and orthogonality of $x$ and $\dot s$ in $\inner{c(t)}{\dot s} = \frac{1}{\sqrt{1+\|s+t\dot s\|^2}} \inner{x + s + t \dot s}{\dot s}$.
	The acceleration of $c$ is $c''(t) = \frac{\D}{\mathrm{d}t} c'(t) = \Proj_{c(t)}\left( \ddt c'(t) \right)$. The first term vanishes after projection, while the second term is unchanged. Overall,
	\begin{align}
		c''(t) & = -\frac{2\inner{s + t\dot s}{\dot s}}{\sqrt{1 + \|s + t \dot s\|^2}^3} \Proj_{c(t)} \dot s =  -\frac{2\inner{c(t)}{\dot s}}{1 + \|s + t \dot s\|^2} \Proj_{c(t)} \dot s.
	\end{align}
	In particular, $c''(0) = -2 \frac{\inner{s}{\dot s}}{\sqrt{1+\|s\|^2}^3} \Proj_{c(0)} \dot s$, so that $\|c''(0)\| \leq 2 \min(\|s\|, 0.4) \|\dot s\|^2$ and the property holds with $c_3 = 2$.
	(Peculiarly, if $s$ and $\dot s$ are orthogonal, $c''(0) = 0$.)
\end{proof}

In order to prove Theorem~\ref{thm:regularitycompact}, we introduce two supporting lemmas (needed only for the case where $\calM$ is not compact) and one key lemma. The first lemma below is similar in spirit to~\citep[Lem.~2.2]{cartis2011adaptivecubicPartI}.
\begin{lemma}\label{lem:boundessteps}
	Let $f \colon \calM \to \reals$ be twice continuously differentiable. 
	Let $\{ (x_0, s_0), (x_1, s_1), \ldots \}$ be the points and steps generated by Algorithm~\ref{algo:ARC}.
	Each step has norm bounded as:
	\begin{align}
		\|s_k\| & \leq \sqrt{\frac{3 \|\nabla \hat f_k(0)\|}{\varsigma_{\min}}} + \frac{3}{2\varsigma_{\min}} \max\left(0, -\lambdamin(\nabla^2 \hat f_k(0))\right),
	\end{align}
	where $\hat f_k = f \circ \Retr_{x_k}$ is the pullback, as in~\eqref{eq:fhatk}.
\end{lemma}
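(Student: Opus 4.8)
The plan is to read the bound straight off the first part of the first-order progress condition~\eqref{eq:firstorderprogress}, namely $m_k(s_k) \le m_k(0)$. Since $m_k(0) = \hat f_k(0)$, the definition of the model~\eqref{eq:mk} turns this into
\[
\innersmall{s_k}{\nabla \hat f_k(0)} + \tfrac12\innersmall{s_k}{\nabla^2 \hat f_k(0)[s_k]} + \tfrac{\varsigma_k}{3}\|s_k\|^3 \le 0.
\]
Write $g = \nabla \hat f_k(0)$, $H = \nabla^2 \hat f_k(0)$, and $\mu = \max\!\big(0, -\lambdamin(H)\big) \ge 0$. First I would lower-bound the linear term by $-\|g\|\,\|s_k\|$ via Cauchy--Schwarz, and the quadratic term by $-\tfrac\mu2\|s_k\|^2$ using $\innersmall{v}{H v} \ge \lambdamin(H)\|v\|^2 \ge -\mu\|v\|^2$. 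Substituting these into the inequality above yields $\tfrac{\varsigma_k}{3}\|s_k\|^3 - \tfrac\mu2\|s_k\|^2 - \|g\|\,\|s_k\| \le 0$.

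If $s_k = 0$ the claimed inequality is trivial; otherwise I divide through by $\|s_k\| > 0$ to obtain the quadratic inequality $\tfrac{\varsigma_k}{3}\|s_k\|^2 - \tfrac\mu2\|s_k\| - \|g\| \le 0$. Consequently $\|s_k\|$ is at most the positive root of the associated quadratic, which the quadratic formula bounds by $\tfrac{3\mu}{4\varsigma_k} + \tfrac{3}{2\varsigma_k}\sqrt{\tfrac{\mu^2}{4} + \tfrac{4\varsigma_k}{3}\|g\|}$. Applying subadditivity of the square root, $\sqrt{a+b}\le\sqrt a+\sqrt b$, to split the radical as $\tfrac\mu2 + \tfrac{2}{\sqrt3}\sqrt{\varsigma_k\|g\|}$ and simplifying the constants gives $\|s_k\| \le \sqrt{3\|g\|/\varsigma_k} + \tfrac{3\mu}{2\varsigma_k}$. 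Finally, the input condition $\varsigma_0 \ge \varsigma_{\min}$ together with the update rule~\eqref{eq:varsigmaupdate} guarantees $\varsigma_k \ge \varsigma_{\min}$ for all $k$, so replacing $\varsigma_k$ by $\varsigma_{\min}$ in the denominators only weakens the bound and produces exactly the stated estimate.

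I do not anticipate a genuine obstacle: the argument is a short chain of elementary estimates (Cauchy--Schwarz, an eigenvalue bound, solving one quadratic, square-root subadditivity). The only things that need a little care are carrying the arithmetic of the quadratic root and the radical split through cleanly so that the constants come out as $\sqrt{3\,\|\nabla\hat f_k(0)\|/\varsigma_{\min}}$ and $\tfrac{3}{2\varsigma_{\min}}$, and recording that the estimate first holds with $\varsigma_k$ before passing to the uniform lower bound $\varsigma_{\min}$.
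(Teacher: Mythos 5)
Your proposal is correct and follows essentially the same route as the paper's proof: both start from $m_k(s_k)\le m_k(0)$, apply Cauchy--Schwarz and the eigenvalue lower bound $\innersmall{s_k}{\nabla^2\hat f_k(0)[s_k]}\ge -\mu\|s_k\|^2$, reduce to a quadratic inequality in $\|s_k\|$, bound the positive root, and split the radical via $\sqrt{u+v}\le\sqrt u+\sqrt v$. The only cosmetic difference is that the paper substitutes $\varsigma_k\ge\varsigma_{\min}$ at the outset whereas you do so at the end; both are valid since each term of the bound is decreasing in $\varsigma_k$.
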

\begin{proof}
	Owing to the first-order progress condition~\eqref{eq:firstorderprogress}, using Cauchy--Schwarz and the fact that $\varsigma_k \geq \varsigma_{\min}$ for all $k$ by design of the algorithm, we find
	\begin{align*}
		\varsigma_{\min} \|s_k\|^3 \leq \varsigma_{k} \|s_k\|^3 & \leq -3\inner{s_k}{\nabla \hat f_k(0) + \frac{1}{2}\nabla^2 \hat f_k(0)[s_k]} \\
			& \leq 3 \|s_k\| \left( \|\nabla \hat f_k(0)\| + \frac{1}{2} \max\left(0, -\lambdamin(\nabla^2 \hat f_k(0))\right) \|s_k\| \right).
	\end{align*}
	This defines a quadratic inequality in $\|s_k\|$:
	\begin{align*}
		\varsigma_{\min} \|s_k\|^2 - h_k \|s_k\| - g_k \leq 0,
	\end{align*}
	where to simplify notation we let $h_k = \frac{3}{2} \max(0, -\lambdamin(\nabla^2 \hat f_k(0)))$ and $g_k = 3 \|\nabla \hat f_k(0)\|$. Since $\|s_k\|$ must lie between the two roots of this quadratic, we know in particular that
	\begin{align*}
		\|s_k\| & \leq \frac{h_k + \sqrt{h_k^2 + 4\varsigma_{\min}g_k}}{2\varsigma_{\min}} \leq \frac{h_k + \sqrt{\varsigma_{\min} g_k}}{\varsigma_{\min}},
	\end{align*}
	where in the last step we used $\sqrt{u+v} \leq \sqrt{u} + \sqrt{v}$ for any $u, v \geq 0$.
\end{proof}
\begin{lemma}\label{lem:calNsubsetcompact}
	Let $f \colon \calM \to \reals$ be twice continuously differentiable. 
	Let $\{ (x_0, s_0), (x_1, s_1), \ldots \}$ be the points and steps generated by Algorithm~\ref{algo:ARC}.
	Consider the following subset of $\calM$, obtained by collecting all curves generated by retracted steps (both accepted and rejected):
	\begin{align}
		\calN & = \bigcup_{k} \left\{ \Retr_{x_k}(ts_k) : t \in [0, 1] \right\}.
		\label{eq:calNretractedsteps}
	\end{align}
	If the sequence $\{x_0, x_1, x_2, \ldots\}$ remains in a compact subset of $\calM$, then $\calN$ is included in a compact subset of $\calM$.
\end{lemma}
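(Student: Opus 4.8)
The plan is to establish a uniform bound on the step norms and then observe that every retraction curve contributing to $\calN$ lies in the image, under $\Retr$, of a fixed compact subset of the tangent bundle $\T\calM$.

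First I would invoke Lemma~\ref{lem:boundessteps}, which bounds $\|s_k\|$ in terms of $\|\nabla \hat f_k(0)\|$ and $\lambdamin(\nabla^2 \hat f_k(0))$, where $\hat f_k = f \circ \Retr_{x_k}$. Let $\calK \subset \calM$ be a compact set containing the whole sequence $\{x_0, x_1, x_2, \ldots\}$ (we may take it closed, as $\calM$ is Hausdorff). By properties of retractions~\eqref{eq:nablahatfTgradf} we have $\nabla \hat f_k(0) = \grad f(x_k)$, and since $f$ is twice continuously differentiable and $\Retr$ is smooth, both $x \mapsto \|\grad f(x)\|$ and $x \mapsto \opnormsmall{\nabla^2 (f \circ \Retr_x)(0)}$ are continuous on $\calM$, hence bounded on the compact set $\calK$. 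Feeding these bounds into Lemma~\ref{lem:boundessteps} yields a constant $\sigma$, depending only on $\calK$, $f$, $\Retr$ and $\varsigma_{\min}$, such that $\|s_k\| \leq \sigma$ for every $k$.

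Next I would introduce the closed ball of radius $\sigma$ in the tangent bundle over $\calK$, namely $B = \{ (x, v) \in \T\calM : x \in \calK \textrm{ and } \|v\|_x \leq \sigma \}$, and argue that $B$ is compact: it is closed in $\T\calM$ because $(x,v) \mapsto \|v\|_x$ is continuous there, and covering $\calK$ by finitely many bundle trivializations---on each of which the Riemannian norm is comparable to the coordinate norm over the relevant compact piece---shows $B$ is contained in a compact subset of $\T\calM$, hence $B$ itself is compact. Then, for every $k$ and every $t \in [0,1]$, the pair $(x_k, t s_k)$ lies in $B$ (since $x_k \in \calK$ and $\|t s_k\| = t\|s_k\| \leq \sigma$), so $\Retr_{x_k}(t s_k) = \Retr(x_k, t s_k) \in \Retr(B)$. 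Therefore $\calN \subseteq \Retr(B)$, and $\Retr(B)$ is compact since $\Retr$ is continuous and $B$ is compact, which is exactly the desired conclusion.

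The main obstacle is the compactness of $B$: this is the one step that does not reduce immediately to ``continuous image of a compact set'' or ``closed subset of a compact set.'' It is the standard fact that the closed disk bundle of a Riemannian manifold over a compact base is compact; I would justify it via the trivialization argument sketched above (or, alternatively, via a partition of unity subordinate to a finite trivializing cover of $\calK$). Everything else---the boundedness of $\grad f$ and of $\nabla^2 (f \circ \Retr_{(\cdot)})(0)$ on $\calK$, and the continuity of $\Retr$---is routine.
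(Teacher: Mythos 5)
Your proposal is correct and follows essentially the same route as the paper: bound $\|s_k\|$ uniformly via Lemma~\ref{lem:boundessteps} and continuity of the relevant quantities over the compact set containing the iterates, then observe that $\calN$ lies in the continuous image of the closed disk bundle of that radius over the compact base. The only difference is cosmetic: the paper simply asserts that this disk bundle $\calK'$ is compact, whereas you supply the (standard) trivialization argument for that step.
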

\begin{proof}
	If $\calM$ is compact, the claim is clear since $\calN \subseteq \calM$. Otherwise, we use Lemma~\ref{lem:boundessteps}. Specifically, considering the upper bound in that lemma, define
	\begin{align*}
		\alpha(x) & = \sqrt{\frac{3 \|\nabla \hat f_x(0)\|}{\varsigma_{\min}}} + \frac{3}{2\varsigma_{\min}} \max\left(0, -\lambdamin(\nabla^2 \hat f_x(0))\right),
	\end{align*}
	where $\hat f_x = f \circ \Retr_x$. This is a continuous function of $x$, and $\|s_k\| \leq \alpha(x_k)$. Since by assumption $\{x_0, x_1, \ldots\} \subseteq \calK$ with $\calK$ compact, we find that
	\begin{align*}
		\forall k, \quad \|s_k\| \leq \sup_{k'} \alpha(x_{k'}) \leq \max_{x \in \calK} \alpha(x) \triangleq r,
	\end{align*}
	where $r$ is a finite number. Consider the following subset of the tangent bundle $\T\calM$:
	\begin{align*}
		\calK' & = \{ (x, s) \in \T\calM : x \in \calK, \|s\|_x \leq r \}.
	\end{align*}
	Since $\calK$ is compact, $\calK'$ is compact. 
	Furthermore, since the retraction is a continuous map, $\Retr(\calK')$ is compact, and it contains $\calN$.
\end{proof}
\begin{lemma} \label{lem:coreOfThmAboutA2}
	Let $f \colon \calM \to \reals$ be three times continuously differentiable, and consider the points and steps $\{(x_0, s_0), (x_1, s_1), \ldots \}$ generated by Algorithm~\ref{algo:ARC}.
	Assume the retraction is second-order nice on this set (see Definition~\ref{def:retrscndnice}).
	If the set $\calN$ as defined by~\eqref{eq:calNretractedsteps} is contained in a compact set $\calK$, then \aref{assu:firstorderregularityscalar} and~\aref{assu:firstorderregularityvectorretraction} are satisfied.
\end{lemma}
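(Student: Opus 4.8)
The plan is to reduce everything to Lemma~\ref{lem:lipschitzhessianbasic}: it suffices to produce a single constant $L$ such that, for every iterate--step pair $(x_k,s_k)$ and every $t\in[0,1]$, the pullback $\hat f_k=f\circ\Retr_{x_k}$ satisfies $\opnorm{\nabla^2\hat f_k(ts_k)-\nabla^2\hat f_k(0)}\le L\,t\|s_k\|$. Because $f$ is three times continuously differentiable and $\calN\subseteq\calK$ with $\calK$ compact, the continuous maps $z\mapsto\|\grad f(z)\|$, $z\mapsto\opnorm{\Hess f(z)}$ and $z\mapsto\|\nabla\Hess f(z)\|$ are bounded on $\calK$, by $G$, $H$ and $L_H$ say; integrating the covariant derivative along curves (exactly as in the proof of Proposition~\ref{prop:LipschitzHessianBoundsretraction}) shows $\Hess f$ is Lipschitz with constant $L_H$ along curves contained in $\calK$. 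Moreover a second-order nice retraction is second order, so $\nabla^2\hat f_k(0)=\Hess f(x_k)$ by~\eqref{eq:nablatwohatfHessfWzero}, and $x_k\in\calN$ lies in $\calK$. (One can already conclude the \emph{existence} of $L$ from a soft compactness argument --- the steps are uniformly bounded by Lemma~\ref{lem:boundessteps}, and $(x,s)\mapsto\nabla^3(f\circ\Retr_x)(s)$ is continuous, so its operator norm is bounded on the relevant compact subset of $\T\calM$ --- but to get an \emph{explicit} $L$ one uses the Hessian formula for the pullback.)

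With $(x_k,s_k)$ fixed, write $y(t)=\Retr_{x_k}(ts_k)$ and $T_\sigma=\D\Retr_{x_k}(\sigma)$, and apply Lemma~\ref{lem:derivativespullback} at the points $ts_k$ and $0$ to get
\begin{align*}
\nabla^2\hat f_k(ts_k)-\nabla^2\hat f_k(0) = \left(T_{ts_k}^*\circ\Hess f(y(t))\circ T_{ts_k}-\Hess f(x_k)\right)+W_{ts_k},
\end{align*}
with $W_{ts_k}$ the acceleration operator of~\eqref{eq:Whessianpullback}. Since $y(t)\in\calN\subseteq\calK$ and $\|y'(t)\|=\|T_{ts_k}s_k\|\le c_1\|s_k\|$ by the first clause of Definition~\ref{def:retrscndnice}, the curve $y$ has length at most $c_1t\|s_k\|$ on $[0,t]$. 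For the operator term, introduce parallel transport $P_t$ along $y$ and set $Q_t=P_t^{-1}T_{ts_k}$ and $\tilde B(t)=P_t^{-1}\circ\Hess f(y(t))\circ P_t$, both acting on the fixed space $\T_{x_k}\calM$, with $Q_0=\Id$, $\tilde B(0)=\Hess f(x_k)$, $\opnorm{Q_t}\le c_1$ and $\opnorm{\tilde B(t)}\le H$. The second clause of Definition~\ref{def:retrscndnice}, combined with the identity $\ddt[P_t^{-1}V(t)]=P_t^{-1}\Ddt V(t)$ already used in Proposition~\ref{prop:LipschitzHessianBoundsretraction}, gives $\opnorm{Q_t-\Id}\le c_2t\|s_k\|$; the Lipschitz property of $\Hess f$ along $y$ gives $\opnorm{\tilde B(t)-\Hess f(x_k)}\le L_Hc_1t\|s_k\|$; and the three-term splitting of $Q_t^*\tilde B(t)Q_t-\Hess f(x_k)$ bounds the operator term by $(c_1c_2H+c_1^2L_H+c_2H)\,t\|s_k\|$.

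The delicate step is the acceleration term. For a unit vector $\dot s$, $\innersmall{W_{ts_k}[\dot s]}{\dot s}=\innersmall{\grad f(y(t))}{c''(0)}$ where $c(u)=\Retr_{x_k}(ts_k+u\dot s)$, so $\opnorm{W_{ts_k}}\le\|\grad f(y(t))\|\cdot\sup_{\|\dot s\|=1}\|c''(0)\|\le G\,c_3t\|s_k\|$ once we know $\|c''(0)\|\le c_3t\|s_k\|$. This is exactly the third clause of Definition~\ref{def:retrscndnice}, but evaluated at the point $ts_k$ rather than at the step $s_k$ itself: the first two clauses are stated with a dilation parameter ranging over $[0,1]$ and so apply along the whole segment $\{ts_k:t\in[0,1]\}$, and the argument hinges on the third clause being available along that segment too --- which is consistent with all these points lying over $\calN\subseteq\calK$. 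Obtaining this uniform-in-$k$ control of the intrinsic acceleration along the step segments, rather than only at the endpoints, is the main obstacle. Granting it, summing the two bounds gives the claimed inequality with $L=c_1c_2H+c_1^2L_H+c_2H+c_3G$, and Lemma~\ref{lem:lipschitzhessianbasic} then yields \aref{assu:firstorderregularityscalar} with this $L$ and \aref{assu:firstorderregularityvectorretraction} with $L'=L$ and $q\equiv0$.
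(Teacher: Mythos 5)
Your proposal follows the paper's argument essentially step for step: reduce to Lemma~\ref{lem:lipschitzhessianbasic}, split $\nabla^2\hat f_k(ts_k)-\nabla^2\hat f_k(0)$ via Lemma~\ref{lem:derivativespullback}, bound $\opnorm{W_{ts_k}}$ by $c_3\,G\,t\|s_k\|$ with $G=\max_{y\in\calK}\|\grad f(y)\|$, and control the conjugated-Hessian term using clauses 1 and 2 of Definition~\ref{def:retrscndnice} together with compactness bounds on $\Hess f$ and its covariant derivative; your parallel-transport conjugation ($Q_t$, $\tilde B(t)$) is only a cosmetic variant of the paper's direct differentiation of $h(\tau)=\inner{\Hess f(c(\tau))[U(\tau)]}{U(\tau)}$ and yields an equally valid (slightly different) constant. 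The ``obstacle'' you flag is resolved exactly as you suspect: the paper's proof invokes the third clause at $(x_k,\bar t s_k)$ for every $\bar t\in[0,1]$---that is, it reads ``second-order nice on the set of points and steps'' as covering the scaled steps along each segment, consistently with clauses 1 and 2 already quantifying over $t\in[0,1]$---so you should simply apply clause 3 at $(x_k,ts_k)$ as well; alternatively, your parenthetical soft-compactness remark (uniformly bounded steps via Lemma~\ref{lem:boundessteps} plus continuity of the third derivative of $(x,s)\mapsto f(\Retr_x(s))$ on a compact subset of $\T\calM$) would close the same point without explicit constants.
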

\begin{proof}
	
	
	For some $k$ and $\bar t \in [0, 1]$, let $(x, s) = (x_k, \bar t s_k)$ and define the pullback $\hat f = f \circ \Retr_x$. Notice in particular that $\Retr_x(s) \in \calN \subseteq \calK$.
	Combine the expression for the Hessian of the pullback~\eqref{eq:hessianpullback} with~\eqref{eq:lipschitzhessianbasic} to get:
	\begin{align*}
	\left\| \nabla^2 \hat f(s) - \nabla^2 \hat f(0) \right\|_{\mathrm{op}} & \leq \left\| T_{s}^* \circ \Hess f(\Retr_x(s)) \circ T_{s} - \Hess f(x) \right\|_{\mathrm{op}} + \left\|  W_{s} - W_0 \right\|_{\mathrm{op}}.
	\end{align*}
	By definition of $W_s$~\eqref{eq:Whessianpullback}, using the third condition on the retraction, we find that $W_0 = 0$ and
	\begin{align*}
	\|W_s\|_\mathrm{op} & = \max_{\substack{\dot s \in \T_x\calM \\ \|\dot s\| \leq 1}} \left| \inner{W_s[\dot s]}{\dot s} \right| \leq \|\grad f(\Retr_x(s))\| \cdot \max_{\substack{\dot s \in \T_x\calM \\ \|\dot s\| \leq 1}} \|c''(0)\| \leq c_3 G \|s\|,
	\end{align*}
	where $G = \max_{y \in \calK} \|\grad f(y)\|$ is finite by compactness of $\calK$ and continuity of the gradient norm.
	Thus, it remains to show that
	\begin{align*}
	\left\| T_s^* \circ \Hess f(\Retr_x(s)) \circ T_s - \Hess f(x) \right\|_{\mathrm{op}} & \leq c' \|s\|
	\end{align*}
	for some constant $c'$. For an arbitrary $\dot s \in \T_x\calM$, owing to differentiability properties of $f$, 
	\begin{align}
		\inner{\big[T_s^* \circ \Hess f(\Retr_x(s)) \circ T_s - \Hess f(x)\big][\dot s]}{\dot s} & = \int_{0}^{1} \frac{\mathrm{d}}{\mathrm{d}\tauort}  \inner{T_{\tauort s}^* \circ \Hess f(\Retr_x(\tauort s)) \circ T_{\tauort s} [\dot s]}{\dot s}  \mathrm{d}\tauort.
		\label{eq:integralddth}
	\end{align}
	We aim to upper bound the above by $c' \|s\| \|\dot s\|^2$.
	Consider the curve $c(\tauort) = \Retr_{x}(\tauort s)$ and a tangent vector field $U(\tauort) = T_{\tauort s} \dot s$ along $c$. Then, define
	\begin{align*}
	h(\tauort) & = \inner{T_{\tauort s}^* \circ \Hess f(c(\tauort)) \circ T_{\tauort s} [\dot s]}{\dot s} \\
	& = \inner{\Hess f(c(\tauort))[T_{\tauort s} \dot s]}{T_{\tauort s} \dot s} \\
	& = \inner{\Hess f(c(\tauort))[U(\tauort)]}{U(\tauort)}.
	\end{align*}
	The integrand in~\eqref{eq:integralddth} is the derivative of the real function $h$:
	\begin{align*}
	h'(\tauort) & = \frac{\mathrm{d}}{\mathrm{d}\tauort} \inner{\Hess f(c(\tauort))[U(\tauort)]}{U(\tauort)} \\
	& = \inner{\frac{\D}{\mathrm{d}\tauort}\Big[\Hess f(c(\tauort))[U(\tauort)]\Big]}{U(\tauort)} + \inner{\Hess f(c(\tauort))[U(\tauort)]}{\frac{\D}{\mathrm{d}\tauort} U(\tauort)} \\
	& = \inner{\left( \nabla_{c'(\tauort)} \Hess f \right)[U(\tauort)]}{U(\tauort)} + 2\inner{\Hess f(c(\tauort))[U(\tauort)]}{U'(\tauort)},
	\end{align*}
	where $U'(\tauort) \triangleq \frac{\D}{\mathrm{d}\tauort} U(\tauort)$ and we used that the Hessian is symmetric. Here, $\nabla_{c'(\tauort)} \Hess f$ is the Levi--Civita derivative of the Hessian tensor field at $c(\tauort)$ along $c'(\tauort)$---see~\cite[Def.~4.5.7, p102]{dC92}
	for the notion of derivative of a tensor field. For every $\tauort$, the latter is a symmetric linear operator on the tangent space at $c(\tauort)$. By Cauchy--Schwarz,
	\begin{align*}
	|h'(\tauort)| & \leq \|\nabla_{c'(\tauort)} \Hess f\|_{\mathrm{op}} \|U(\tauort)\|^2 + 2 \|\Hess f(c(\tauort))\|_{\mathrm{op}} \|U(\tauort)\| \|U'(\tauort)\|.
	\end{align*}
	By compactness of $\calK$ and continuity of the Hessian, we can define
	\begin{align*}
		H & = \max_{y \in \calK} \|\Hess f(y)\|_{\mathrm{op}}.
	\end{align*}
	By linearity of the connection $\nabla$, if $c'(\tauort) \neq 0$,
	\begin{align*}
	\nabla_{c'(\tauort)} \Hess f & = \|c'(\tauort)\| \cdot \nabla_{\frac{c'(\tauort)}{\|c'(\tauort)\|}} \Hess f.
	\end{align*}
	Furthermore, $c'(\tauort) = T_{\tauort s} s$ has norm bounded by the first assumption on the retraction: $\|c'(\tauort)\| \leq c_1 \|s\|$. Thus, in all cases, by compactness of $\calK$ and continuity of the function $v \to \nabla_v \Hess f$ on the tangent bundle $\T\calM$, there is a finite $J$ as follows: 
	\begin{align*}
		\|\nabla_{c'(\tauort)} \Hess f \|_{\mathrm{op}} & \leq c_1 \|s\| \cdot \overbrace{\max_{\substack{y \in \calK, v \in \T_y\calM \\ \|v\| \leq 1}} \| \nabla_v \Hess f \|_{\mathrm{op}}}^{J}.
	\end{align*}
	Of course, $\|U(\tauort)\| \leq c_1 \|\dot s\|$. Finally, we bound $\|U'(\tauort)\|$ using the second property of the retraction: $\|U'(\tauort)\| \leq c_2\|s\| \|\dot s\|$. Collecting what we learned about $|h'(t)|$ and injecting in~\eqref{eq:integralddth},
	\begin{align*}
		\left|\inner{\big[T_s^* \circ \Hess f(\Retr_x(s)) \circ T_s - \Hess f(x)\big][\dot s]}{\dot s}\right| & \leq \int_{0}^{1} |h'(\tauort)| \mathrm{d}\tauort \leq \left[ c_1^3 J + 2 c_1 c_2 H \right] \|s\| \|\dot s\|^2.
	\end{align*}
	Finally, it follows from Lemma~\ref{lem:lipschitzhessianbasic} that~\aref{assu:firstorderregularityscalar} and~\aref{assu:firstorderregularityvectorretraction} hold with $L = L' = c_3G + 2 c_1 c_2 H + c_1^3 J$ and $q \equiv 0$. We note in closing that the constants $G, H, J$ can be related to the Lipschitz properties of $f$, $\grad f$ and $\Hess f$, respectively.
\end{proof}
The theorem we wanted to prove now follows as a direct corollary.
\begin{proof}[Proof of Theorem~\ref{thm:regularitycompact}]
	For the main result, simply combine Lemmas~\ref{lem:calNsubsetcompact} and~\ref{lem:coreOfThmAboutA2}. To support the closing statement, it is sufficient to verify that Algorithm~\ref{algo:ARC} is a descent method
	owing to the step acceptance mechanism and the first part of condition~\eqref{eq:firstorderprogress}.
\end{proof}

\section{Proofs from Section~\ref{sec:Dretr}: differential of retraction} \label{sec:proofsDretr}

\subsection*{Stiefel manifold}

Proposition~\ref{prop:stiefelmastercorollary} regarding the Stiefel manifold is a corollary of the following statement.
\begin{lemma} \label{lemma:stiefelbound}
	For the Stiefel manifold $\calM = \St(n,p)$ with the Q-factor retraction $\Retr$, for all $X \in \calM$ and $S \in \T_X\calM$,
	\begin{align*}
	\sigmamin(\D\Retr_X(S)) \geq 1 - 3\|S\|_{\mathrm{F}} - \frac{1}{2}\|S\|_{\mathrm{F}}^2,
	\end{align*}
	where $\|\cdot\|_{\mathrm{F}}$ denotes the Frobenius norm. Moreover, for the special case $p = 1$ (the unit sphere in $\Rn$), the retraction reduces to $\Retr_x(s) = \frac{x+s}{\|x+s\|}$ and we have for all $x \in \calM, s \in \T_x\calM$:
	\begin{align*}
	\sigmamin(\D\Retr_x(s)) = \frac{1}{1 + \|s\|^2}.
	\end{align*}
\end{lemma}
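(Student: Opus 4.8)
The plan is to write the Q-factor retraction as $\Retr_X(S) = (X+S)R^{-1}$, where $X+S = QR$ is the thin QR factorization ($R$ upper triangular with positive diagonal), differentiate it along a straight line in the tangent space, and bound the least singular value of the resulting linear map directly; the sphere case $p=1$ will be treated separately via the closed form already obtained in the proof of Proposition~\ref{prop:sphereretrnice}. First note that the tangency condition $X\transpose S + S\transpose X = 0$ gives $(X+S)\transpose(X+S) = I_p + S\transpose S \succ 0$, so $X+S$ always has full column rank, $\Retr_X$ is globally defined, and $R\transpose R = I_p + S\transpose S$. Fixing $\dot S \in \T_X\St(n,p)$ and letting $A(t) = X+S+t\dot S = Q(t)R(t)$ with $R := R(0)$, differentiation of $Q(t) = A(t)R(t)^{-1}$ at $t=0$ (using $(R^{-1})' = -R^{-1}R'R^{-1}$ and $A(0)R^{-1} = Q$) yields the working identity $\D\Retr_X(S)[\dot S] = \big(\dot S - QR'(0)\big)R^{-1}$.

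Next I would control $R$, $R^{-1}$ and $R'(0)$. From $R\transpose R = I_p + S\transpose S$ one reads off $\sigma_{\min}(R) \ge 1$ and $\sigma_{\max}(R) = \sqrt{1+\|S\|_{\mathrm{op}}^2} \le \sqrt{1+\|S\|_{\mathrm F}^2}$, hence $\|R^{-1}\|_{\mathrm{op}} \le 1$ and $\|R\|_{\mathrm{op}} \le \sqrt{1+\|S\|_{\mathrm F}^2}$. Differentiating $A(t)\transpose A(t) = R(t)\transpose R(t)$ at $t=0$ gives $R'(0)\transpose R + R\transpose R'(0) = \dot S\transpose(X+S) + (X+S)\transpose\dot S$, and the crucial cancellation is that $X\transpose\dot S + \dot S\transpose X = 0$ (again tangency), which leaves $R'(0)\transpose R + R\transpose R'(0) = S\transpose\dot S + \dot S\transpose S =: C$ with $\|C\|_{\mathrm F} \le 2\|S\|_{\mathrm F}\|\dot S\|_{\mathrm F}$. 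Writing $N = R'(0)R^{-1}$, which is upper triangular, this reads $N + N\transpose = R^{-\top}CR^{-1}$; since an upper-triangular $N$ is recovered from its symmetric part as $N = \mathrm{strictupper}(N+N\transpose) + \tfrac12\diag(N+N\transpose)$, one has $\|N\|_{\mathrm F} \le \tfrac{1}{\sqrt2}\|N+N\transpose\|_{\mathrm F} = \tfrac{1}{\sqrt2}\|R^{-\top}CR^{-1}\|_{\mathrm F} \le \tfrac{1}{\sqrt2}\|C\|_{\mathrm F}$ using $\|R^{-1}\|_{\mathrm{op}}\le 1$, whence $\|R'(0)\|_{\mathrm F} \le \|N\|_{\mathrm F}\|R\|_{\mathrm{op}} \le \sqrt2\,\|S\|_{\mathrm F}\|\dot S\|_{\mathrm F}\sqrt{1+\|S\|_{\mathrm F}^2}$.

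Then I would assemble the estimate. Using $\|MR^{-1}\|_{\mathrm F}^2 = \trace\!\big(M(R\transpose R)^{-1}M\transpose\big) \ge (1+\|S\|_{\mathrm F}^2)^{-1}\|M\|_{\mathrm F}^2$ and $\|QR'(0)\|_{\mathrm F} = \|R'(0)\|_{\mathrm F}$ (as $Q$ has orthonormal columns), the working identity gives, for every $\dot S$,
\begin{align*}
	\|\D\Retr_X(S)[\dot S]\|_{\mathrm F} \ge \frac{\|\dot S\|_{\mathrm F} - \|R'(0)\|_{\mathrm F}}{\sqrt{1+\|S\|_{\mathrm F}^2}} \ge \|\dot S\|_{\mathrm F}\left(\frac{1}{\sqrt{1+\|S\|_{\mathrm F}^2}} - \sqrt2\,\|S\|_{\mathrm F}\right),
\end{align*}
so $\sigmamin(\D\Retr_X(S)) \ge (1+\|S\|_{\mathrm F}^2)^{-1/2} - \sqrt2\,\|S\|_{\mathrm F}$; combining with the elementary bound $(1+u)^{-1/2} \ge 1 - \tfrac u2$ for $u\ge 0$ and with $\sqrt2 < 3$ yields $\sigmamin(\D\Retr_X(S)) \ge 1 - \tfrac12\|S\|_{\mathrm F}^2 - \sqrt2\,\|S\|_{\mathrm F} \ge 1 - 3\|S\|_{\mathrm F} - \tfrac12\|S\|_{\mathrm F}^2$. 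For $p=1$ I would instead use the closed form from the proof of Proposition~\ref{prop:sphereretrnice}, namely $\D\Retr_x(s)[\dot s] = (1+\|s\|^2)^{-1/2}(I_n - yy\transpose)\dot s$ with $y = \Retr_x(s)$: for unit $\dot s \in x^\perp$ we have $\|(I_n - yy\transpose)\dot s\|^2 = 1 - (y\transpose\dot s)^2$ and $y\transpose\dot s = (1+\|s\|^2)^{-1/2}\,s\transpose\dot s$, which is extremized at $\dot s \parallel s$; hence $\min_{\dot s}\|(I_n - yy\transpose)\dot s\|^2 = (1+\|s\|^2)^{-1}$, and multiplying by the scalar prefactor gives $\sigmamin(\D\Retr_x(s)) = (1+\|s\|^2)^{-1}$ exactly.

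The step I expect to be the main obstacle is the bound on $\|R'(0)\|_{\mathrm F}$: a naive estimate of $R'(0)$ through $\|A'(0)\transpose A(0) + A(0)\transpose A'(0)\|$ retains an $O(1)$ contribution from $X\transpose\dot S$, which would swamp the leading constant $1$ in the claimed inequality; it is precisely the cancellation $X\transpose\dot S + \dot S\transpose X = 0$ — i.e. the fact that $\dot S$ lies in the tangent space — that makes $R'(0)$ of order $\|S\|_{\mathrm F}\|\dot S\|_{\mathrm F}$ and lets the argument close. The only other point needing a line of justification is the elementary ``solve for the upper-triangular part'' fact that $N + N\transpose = D$ with $N$ upper triangular forces $\|N\|_{\mathrm F} \le \tfrac{1}{\sqrt2}\|D\|_{\mathrm F}$.
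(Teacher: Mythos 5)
Your proof is correct, and it reaches a slightly sharper bound ($1-\sqrt{2}\,\|S\|_{\mathrm F}-\tfrac12\|S\|_{\mathrm F}^2$) than the one stated. The route differs from the paper's in a meaningful way. The paper starts from the known closed-form expression for the differentiated Q-factor retraction, $\D\Retr_X(S)[Z] = Q\rho_{\mathrm{skew}}(Q\transpose Z R^{-1}) + (I-QQ\transpose)ZR^{-1}$ with $\rho_{\mathrm{skew}}(M)=\mathrm{tril}(M)-\mathrm{tril}(M)\transpose$, uses tangency of $Z$ to reduce the argument of $\rho_{\mathrm{skew}}$ to $(R^{-1})\transpose S\transpose Z R^{-1}$ (which carries the factor $\|S\|$), and then invokes the crude bound $\|Q(\rho_{\mathrm{skew}}(M)-M)\|_{\mathrm F}\le 3\|M\|_{\mathrm F}$ — that is where the constant $3$ comes from. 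You instead re-derive the differential from scratch by implicit differentiation of $A(t)=Q(t)R(t)$, obtaining the decomposition $\D\Retr_X(S)[\dot S]=(\dot S - QR'(0))R^{-1}$, and your key lemma is the Lyapunov-type recovery of an upper-triangular $N$ from $N+N\transpose$, giving $\|N\|_{\mathrm F}\le\tfrac{1}{\sqrt2}\|N+N\transpose\|_{\mathrm F}$. Both arguments pivot on the same cancellation (tangency kills the $O(1)$ term $X\transpose\dot S+\dot S\transpose X$, leaving a perturbation of size $O(\|S\|_{\mathrm F}\|\dot S\|_{\mathrm F})$), and both use $R\transpose R=I_p+S\transpose S$ identically; but your bounding of the perturbation is tighter and self-contained, at the modest cost of having to justify smoothness of the QR factor along $t\mapsto X+S+t\dot S$ (immediate, since full column rank is an open condition and the positive-diagonal QR factorization is smooth there). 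Your sphere computation matches the paper's exactly, via the expression for $\D\Retr_x(s)$ already established for Proposition~\ref{prop:sphereretrnice}.
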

\begin{proof}
Let $X \in \St(n, p)$ and $S \in \T_X \St(n, p) = \{ \dot X \in \Rnp : \dot X\transpose  X + X\transpose \dot X = 0 \}$ be fixed. Define $Q,R$ as the thin $QR$-decomposition of $X + S$, that is, $Q$ is an $n \times p$ matrix with orthonormal columns and $R$ is a $p \times p$ upper triangular matrix with positive diagonal entries such that $X+S = QR$: this decomposition exists and is unique since $X+S$ has full column rank, as shown below~\eqref{eq:QRfull}. By definition, we have that $\Retr_X(S) = Q$. 

For a matrix $M$, define $\mathrm{tril}(M)$ as the lower triangular portion of the matrix $M$, that is, $\mathrm{tril}(M)_{ij} = M_{ij}$ if $i \geq j$ and 0 otherwise. Further define $\rho_{\mathrm{skew}}(M)$ as  
\[\rho_{\mathrm{skew}}(M) \defeq \mathrm{tril}(M) - \mathrm{tril}(M)\transpose.\]
As derived in~\citep[Ex.~8.1.5]{AMS08} (see also the erratum for the reference) we have a formula for the directional derivative of the retraction along any $Z \in \T_X \St(n, p)$:
\begin{align}
	\D\Retr_X(S)[Z] & = Q\rho_{\mathrm{skew}}(Q\transpose Z R^{-1}) + (I-QQ\transpose) Z R^{-1}.
	\label{eq:DRetrQR}
\end{align}
We first confirm that $R$ is always invertible. To see this, note that $S$ being tangent at $X$ means $S\transpose X + X\transpose S = 0$ and therefore
\begin{align}
	R\transpose R & = \underbrace{(X+S)\transpose (X + S)}_{\textrm{start reading here}} = X\transpose X + X\transpose S + S\transpose X + S\transpose S = I_p + S\transpose S,
	\label{eq:QRfull}
\end{align}
which shows $R$ is invertible. Moreover the above expression also implies that:
\begin{align*}
	\sigma_k(R) = \sigma_k(X + S) = \sqrt{\lambda_k((X+S)\transpose (X+S))} = \sqrt{1 + \lambda_k(S\transpose S)} = \sqrt{1 + \sigma_k(S)^2},
\end{align*}
where $\sigma_k(M)$ represents the $k$th singular value of $M$ and $\lambda_k$ likewise extracts the $k$th eigenvalue (in decreasing order for symmetric matrices).
In particular we have that
\begin{align}
	\sigmamin(R^{-1}) & = \frac{1}{\sqrt{1 + \sigmamax(S)^2}} \geq \frac{1}{\sqrt{1 + \|S\|_{\mathrm{F}}^2}} \geq 1 - \frac{1}{2}\|S\|_{\mathrm{F}}^2, \nonumber\\
	\sigmamax(R^{-1}) & = \frac{1}{\sqrt{1 + \sigmamin(S)^2}} \leq 1. \label{eqn:sigmamaxbound}
\end{align}
Further note that since $QR = X + S$, we have that $Q = (X+S)R^{-1}$ and therefore
\begin{align*}
	Q\transpose Z R^{-1} & = (R^{-1})\transpose (X+S)\transpose Z R^{-1} \nonumber\\
	& = (R^{-1})\transpose X\transpose Z R^{-1} + (R^{-1})\transpose S\transpose Z R^{-1}.
\end{align*}
The first term above is always skew-symmetric since $Z$ is tangent at $X$, so that $X\transpose Z + Z\transpose X = 0$. Furthermore, for any skew-symmetric matrix $M$, $\rho_{\mathrm{skew}}(M) = M$. Therefore, using~\eqref{eq:DRetrQR},
\begin{align}
	\D\Retr_X(S)[Z] & = Q\rho_{\mathrm{skew}}(Q\transpose Z R^{-1}) + (I-QQ\transpose) Z R^{-1} \nonumber \\
	& = Q\left(\rho_{\mathrm{skew}}(Q\transpose Z R^{-1}) - Q\transpose Z R^{-1} \right) + Z R^{-1} \nonumber\\
	& = Q\left(\rho_{\mathrm{skew}}((R^{-1})\transpose S\transpose Z R^{-1}) - (R^{-1})\transpose S\transpose Z R^{-1} \right) + Z R^{-1}, \label{eq:DRetrequalsplit}
\end{align}
where in the last step we used $XR^{-1} - Q = -SR^{-1}$.
Further note that for any matrix $M$ of size $p \times p$,
\begin{align}
	\|Q(\rho_{\mathrm{skew}}(M) - M)\|_{\mathrm{F}} = \|\mathrm{tril}(M) - \mathrm{tril}(M)\transpose - M\|_{\mathrm{F}} \leq 3 \|M\|_{\mathrm{F}}.
\end{align}
Hence, we have that,
\begin{align}
	\| \D\Retr_X(S)[Z] \|_{\mathrm{F}} & \geq \|ZR^{-1}\|_{\mathrm{F}} - 3\| (R^{-1})\transpose S\transpose Z R^{-1} \|_{\mathrm{F}} \nonumber\\
						& \geq \|Z\|_{\mathrm{F}} \left( \sigmamin(R^{-1}) - 3 \sigmamax(R^{-1})^2 \sigmamax(S) \right),
\end{align}
where we have used $\|A\|_{\mathrm{F}}\sigmamin(B) \leq \|AB\|_{\mathrm{F}} \leq \|A\|_{\mathrm{F}} \sigmamax(B)$ multiple times. Using the bounds on the singular values of $R^{-1}$ (derived in \eqref{eqn:sigmamaxbound}) we get that
\[
	\| \D\Retr_X(S)[Z] \|_{\mathrm{F}} \geq \|Z\|_{\mathrm{F}} \left( 1 - \frac{1}{2}\|S\|_{\mathrm{F}}^2 - 3 \|S\|_{\mathrm{F}} \right). 
\]
Since this holds for all tangent vectors $Z$, we get that
\[ \sigmamin(\D\Retr_X(S)) \geq 1 - 3 \|S\|_{\mathrm{F}} - \frac{1}{2}\|S\|_{\mathrm{F}}^2.\]
To prove a better bound for the case of $p=1$ (the sphere), we improve the analysis of the expression derived in~\eqref{eq:DRetrequalsplit}. Note that for $p = 1$, the matrix inside the $\rho_{\mathrm{skew}}$ operator is a scalar, whose skew-symmetric part is necessarily zero. Also note that $Q$ is a single column matrix with value $\frac{x + s}{\|x + s\|}$ and $R = \|x + s\|$. Also, $X\transpose S X\transpose Z = 0$ since $S,Z$ are tangent. Therefore,
\begin{align*}
	\D\Retr_X(S)[Z] & = Z R^{-1} - Q(R^{-1})\transpose S\transpose Z R^{-1} \\
	 				& = \frac{1}{\|x+s\|} \left(z - \frac{s\transpose z}{1+\|s\|^2}(x+s)\right) \\
	 				& = \frac{1}{\|x+s\|} \left(z - \frac{s\transpose z}{1+\|s\|^2} s - \frac{s\transpose z}{1+\|s\|^2} x\right).
\end{align*}
Since $x$ is orthogonal to $s$ and $z$,
\begin{align*}
	\|\D\Retr_x(s)[z]\|^2 & = \frac{1}{1+\|s\|^2} \left( \left\|z - \frac{s\transpose z}{1+\|s\|^2} s \right\|^2 + \left( \frac{s\transpose z}{1+\|s\|^2}\right)^2 \right) \\
					   	    & = \frac{1}{1+\|s\|^2} \left( \|z\|^2 - 2\frac{(s\transpose z)^2}{1+\|s\|^2}  + \left( \frac{s\transpose z}{1+\|s\|^2}\right)^2 (1 + \|s\|^2) \right) \\
					   	    & = \frac{1}{1+\|s\|^2} \left( \|z\|^2 - \frac{(s\transpose z)^2}{1+\|s\|^2}\right) \\
					   	    & \geq \|z\|^2 \frac{1}{1+\|s\|^2} \left( 1 - \frac{\|s\|^2}{1+\|s\|^2} \right) \\
					   	    & = \|z\|^2 \frac{1}{(1+\|s\|^2)^2}.
\end{align*}
The worst-case scenario is achieved when $z$ and $s$ are aligned. Overall, we get
\[
	\|\D\Retr_x(s)[z]\| \geq \|z\| \frac{1}{1+\|s\|^2},
\]
which establishes the bound for the sphere. 
\end{proof}

\subsection*{Differential of exponential map for manifolds with bounded curvature}


Proposition~\ref{prop:mastercorollaryexponetial} regarding the differential of the exponential map on complete manifolds with bounded sectional curvature follows as a corollary of the following statement.
\begin{lemma} \label{lemma:exponentialbound}
	Assume all sectional curvatures of $\calM$, complete, are bounded above by $C$:
	\begin{itemize}
		\item[] If $C \leq 0$, then $\sigmamin(\D\Exp_x(s)) = 1$;
		\item[] If $C = \frac{1}{R^2} > 0$ and $\|s\| \leq \pi R$, then $1 \geq \sigmamin(\D\Exp_x(s)) \geq \frac{\sin(\|s\|/R)}{\|s\|/R}$.
	\end{itemize}
	As usual, we use the convention $\sin(t)/t = 1$ at $t = 0$.
\end{lemma}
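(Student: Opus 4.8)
The plan is to realize $\D\Exp_x(s)$ through Jacobi fields and then apply the Rauch comparison theorem. Recall (see e.g.\ \citep[Ch.~10]{lee2018riemannian}) that for the geodesic $\gamma(t) = \Exp_x(ts)$ and any $\dot s \in \T_x\calM$ one has $\D\Exp_x(s)[\dot s] = J(1)$, where $J$ is the unique Jacobi field along $\gamma$ with $J(0) = 0$ and $\frac{\D}{\dt}J(0) = \dot s$; this uses completeness of $\calM$ so that $\gamma$ is defined on all of $[0,1]$. First I would split $\dot s = \dot s^\parallel + \dot s^\perp$ with $\dot s^\parallel$ a multiple of $s$ and $\dot s^\perp \perp s$, and split $J = J^\top + J^\perp$ accordingly. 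A Jacobi field vanishing at $0$ whose initial derivative is tangent (resp.\ normal) to $\gamma'$ stays tangent (resp.\ normal) to $\gamma'$, so $J^\top(1)$ and $J^\perp(1)$ are orthogonal and $\|\D\Exp_x(s)[\dot s]\|^2 = \|J^\top(1)\|^2 + \|J^\perp(1)\|^2$. The tangential part is explicit: $J^\top(t) = \frac{\langle \dot s, s\rangle}{\|s\|^2}\,t\,\gamma'(t)$, and since geodesics have constant speed $\|\gamma'(t)\| = \|s\|$ this gives $\|J^\top(1)\| = \|\dot s^\parallel\|$ and $J^\top(1) = P_s\dot s^\parallel$ (using that $\gamma' = P_{ts}s$ is parallel). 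In particular $\sigmamin(\D\Exp_x(s)) \leq 1$ by taking $\dot s = s$, and it remains to lower-bound $\|J^\perp(1)\|$.

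Next I would handle the model space of constant curvature $C$, which yields both the identity~\eqref{eq:DExpConstantCurvature} needed in Section~\ref{sec:firstorderanalysisgeneral} and the comparison target. Let $\operatorname{sn}_C$ solve $\operatorname{sn}_C'' + C\,\operatorname{sn}_C = 0$, $\operatorname{sn}_C(0) = 0$, $\operatorname{sn}_C'(0) = 1$, so that $\operatorname{sn}_C(u)/u$ equals $1$ for $C = 0$, $\frac{\sin(u/R)}{u/R}$ for $C = 1/R^2$, and $\frac{\sinh(u/R)}{u/R}$ for $C = -1/R^2$ (convention $\operatorname{sn}_C(0)/0 = 1$). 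In constant curvature, the normal Jacobi field along a speed-$\|s\|$ geodesic with $J^\perp(0)=0$, $\frac{\D}{\dt}J^\perp(0) = \dot s^\perp$ obeys $\ddot\phi + C\|s\|^2\phi = 0$ for its scalar coefficient, hence $J^\perp(1) = \frac{\operatorname{sn}_C(\|s\|)}{\|s\|}\,P_s\dot s^\perp$; combined with $J^\top(1) = P_s\dot s^\parallel$ this gives $\D\Exp_x(s)[\dot s] = P_s\dot s + \big(\tfrac{\operatorname{sn}_C(\|s\|)}{\|s\|} - 1\big)P_s\dot s^\perp$ with $\dot s^\perp = \dot s - \frac{\langle s, \dot s\rangle}{\|s\|^2}s$, which is exactly~\eqref{eq:DExpConstantCurvature} with $h(\|s\|) = \operatorname{sn}_C(\|s\|)/\|s\| - 1$.

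For the general bounded-above case I would invoke the Rauch comparison theorem \citep[Ch.~11]{lee2018riemannian} on $J^\perp$, comparing $\gamma$ with a geodesic of the same speed $\|s\|$ in the model of constant curvature $C$: all sectional curvatures of $\calM$ are $\leq C$, and when $C = 1/R^2 > 0$ the model geodesic has no conjugate point before parameter $\pi R/\|s\| \geq 1$ — this is precisely where $\|s\| \leq \pi R$ is used — so Rauch yields $\|J^\perp(1)\| \geq \frac{\operatorname{sn}_C(\|s\|)}{\|s\|}\|\dot s^\perp\|$. If $C \leq 0$ then $\operatorname{sn}_C(\|s\|)/\|s\| \geq 1$, so $\|\D\Exp_x(s)[\dot s]\|^2 \geq \|\dot s^\parallel\|^2 + \|\dot s^\perp\|^2 = \|\dot s\|^2$, and with the upper bound $\leq 1$ from the tangential direction this gives $\sigmamin(\D\Exp_x(s)) = 1$. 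If $C = 1/R^2 > 0$ and $\|s\| \leq \pi R$, set $\mu = \frac{\sin(\|s\|/R)}{\|s\|/R} \in [0,1]$; then $\|\D\Exp_x(s)[\dot s]\|^2 \geq \|\dot s^\parallel\|^2 + \mu^2\|\dot s^\perp\|^2 \geq \mu^2\|\dot s\|^2$, so $\sigmamin(\D\Exp_x(s)) \geq \mu$, while the tangential direction still gives $\leq 1$.

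The main obstacle I expect is invoking Rauch with the correct orientation and hypotheses: curvature bounded \emph{above} by $C$ corresponds to Jacobi fields being \emph{at least} as long as in the $C$-model, and one must carefully track both the no-conjugate-point condition on the comparison geodesic (which is what forces $\|s\| \leq \pi R$ when $C > 0$) and the speed-$\|s\|$ reparametrization. Once this is set up, everything else is bookkeeping with the orthogonal tangential/normal splitting.
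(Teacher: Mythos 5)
Your proposal is correct and follows essentially the same route as the paper's proof: realizing $\D\Exp_x(s)[\dot s]$ as $J(1)$ for the Jacobi field with $J(0)=0$, $\frac{\D}{\dt}J(0)=\dot s$, splitting into tangential and normal parts, computing the constant-curvature model explicitly (recovering~\eqref{eq:DExpConstantCurvature}), and then applying the Jacobi field comparison theorem (your ``Rauch'') for curvature bounded above by $C$, with the $\|s\|\leq\pi R$ condition entering exactly as you describe. The paper packages the same steps as citations to \citep[Prop.~10.10, Prop.~10.12, Prop.~10.7, Thm.~11.9(a)]{lee2018riemannian}, whereas you solve the constant-coefficient Jacobi ODE directly; the content is identical.
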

\begin{proof}
	This results from a combination of few standard facts in Riemannian geometry:
	\begin{enumerate}
		\item \citep[Prop.~10.10]{lee2018riemannian} Given any two tangent vectors $s, \dot s \in \T_x\calM$, $J(t) = \D\Exp_x(ts)[t\dot s]$ is the unique Jacobi field along the geodesic $\gamma(t) = \Exp_x(ts)$ satisfying $J(0) = 0$ and $\Ddt J(0) = \dot s$.
		
		\item In particular, if $\dot s = \alpha s$ for some $\alpha \in \reals$ so that $\dot s$ and $s$ are parallel, then
		\begin{align*}
			J(t) & = \D\Exp_x(ts)][t\dot s] = \left. \ddq \Exp_x(ts + qt\dot s) \right|_{q=0} = \left. \ddq \gamma(t + q\alpha t) \right|_{q=0} = \alpha t \gamma'(t) = t P_{ts}(\dot s),
		\end{align*}
		using $\gamma'(t) = P_{ts}(s)$.
		It remains to understand the case where $\dot s$ is orthogonal to $s$.
		
		\item \citep[Prop.~10.12]{lee2018riemannian} If $\calM$ has constant sectional curvature $C$, $\|s\| = 1$ and $\inner{s}{\dot s} = 0$, the Jacobi field above is given by:
		\begin{align*}
			J(t) & = s_C(t) P_{ts}(\dot s),
		\end{align*}
		where $P_{ts}$ denotes parallel transport along $\gamma$ as in~\eqref{eq:Ps} and
		\begin{align*}
			s_C(t) & = \begin{cases}
				t & \textrm{ if } C = 0, \\
				R \sin(t/R) & \textrm{ if } C = \frac{1}{R^2} > 0, \textrm{ and} \\
				R \sinh(t/R) & \textrm{ if } C = -\frac{1}{R^2}.
			\end{cases}
		\end{align*}
		This can be reparameterized to allow for $\|s\| \neq 1$.
		Evaluating at $t = 1$ and using linearity in $\dot s$, we find for any $s, \dot s \in \T_x\calM$ that
		\begin{align}
			\D\Exp_x(s)[\dot s] & = P_s\!\left(\dot s_\parallel + \frac{s_C(\|s\|)}{\|s\|} \dot s_\perp \right),
		\end{align}
		where $\dot s_\perp$ is the part of $\dot s$ which is orthogonal to $s$ and $\dot s_\parallel$ is the part of $\dot s$ which is parallel to $s$---this corresponds to expression~\eqref{eq:DExpConstantCurvature}. By isometry of parallel transport, it is a simple exercise in linear algebra to deduce that
		\begin{align*}
			\sigmamin(\D\Exp_x(s)) & = \min\left( 1,  \frac{s_C(\|s\|)}{\|s\|} \right).
		\end{align*}
		
		\item \citep[Thm.~11.9(a)]{lee2018riemannian} Consider the case where $\dot s$ is orthogonal to $s$ of unit norm once again: the Jacobi field comparison theorem states that if the sectional curvatures of $\calM$ are upper-bounded by $C$, then $\|J(t)\|$ is at least as large as what it would be if $\calM$ had constant sectional curvature $C$---with the additional condition that $\|s\| \leq \pi R$ if $C = 1/R^2 > 0$. This leads to the conclusion through similar developments as above, using also~\citep[Prop.~10.7]{lee2018riemannian} to separate the components of $J(t)$ that are parallel or orthogonal to $\gamma'(t)$.
		\qedhere
	\end{enumerate}

\end{proof}

\subsection*{Extending to general retractions}

In order to prove Theorem~\ref{thm:retractionsgeneralmainnew}, we first introduce a result from topology. We follow~\cite{berge1963topological}, including the blanket assumption that all encountered topological spaces are Hausdorff (page 65 in that reference)---this is the case for us so long as the topology of $\calM$ itself is Hausdorff, which most authors require as part of the definition of a smooth manifold. Products of topological spaces are equipped with the product topology. Neighborhoods are open. A \emph{correspondence} $\Gamma \colon Y \to Z$ maps points in $Y$ to subsets of $Z$.
\begin{definition}[Upper semicontinuous (u.s.c.)\ mapping] \label{def:uscmapping}
	A correspondence $\Gamma \colon Y \rightarrow Z$ between two topological spaces $Y, Z$ is a \emph{u.s.c.\ mapping} if, for all $y$ in $Y$, $\Gamma(y)$ is a compact subset of $Z$ and, for any neighborhood $V$ of $\Gamma(y)$, there exists a neighborhood $U$ of $y$ such that, for all $u \in U$, $\Gamma(u) \subseteq V$. 
\end{definition}
\begin{theorem}[{\protect\citet[Thm.~VI.2, p116]{berge1963topological}}] \label{thm:bergemain}
	If $\phi$ is an upper semicontinuous, real-valued function in $Y \times Z$ and $\Gamma$ is a u.s.c.\ mapping of $Y$ into $Z$ (two topological spaces) such that $\Gamma(y)$ is nonempty for each $y$, then the real-valued function $M$ defined by
	\begin{align*}
	M(y) = \max_{z \in \Gamma(y)} \phi(y,z)
	\end{align*}
	is upper semicontinuous. (Under the assumptions, the maximum is indeed attained.)
\end{theorem}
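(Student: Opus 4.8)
The plan is to establish two things: that the maximum defining $M(y)$ is genuinely attained for every $y$, and that the resulting function $M$ is upper semicontinuous. For the attainment, I would fix $y$ and note that $z \mapsto \phi(y,z)$ is upper semicontinuous on the compact set $\Gamma(y)$; the strict sublevel sets $\{z \in \Gamma(y) : \phi(y,z) < n\}$, $n \in \mathbb{N}$, then form an open cover of $\Gamma(y)$, so a finite subcover shows $\phi(y,\cdot)$ is bounded above on $\Gamma(y)$ by some finite $\alpha = \sup_{z \in \Gamma(y)} \phi(y,z)$. The closed sets $\{z \in \Gamma(y) : \phi(y,z) \ge \alpha - 1/k\}$ are nonempty and nested, so by compactness (finite intersection property) their intersection is nonempty, and any point in it attains the value $\alpha$. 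This justifies writing $\max$ in place of $\sup$.

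For upper semicontinuity of $M$, I would fix $y_0 \in Y$ and an arbitrary real $\lambda > M(y_0)$, and produce a neighborhood $U$ of $y_0$ on which $M < \lambda$. Since $\phi(y_0,z) < \lambda$ for every $z$ in the compact set $\Gamma(y_0)$, upper semicontinuity of $\phi$ \emph{on the product} $Y \times Z$ provides, for each such $z$, a basic open neighborhood $A_z \times B_z \ni (y_0,z)$ with $\phi < \lambda$ throughout $A_z \times B_z$. The family $\{B_z : z \in \Gamma(y_0)\}$ covers $\Gamma(y_0)$; extracting a finite subcover $B_{z_1},\dots,B_{z_n}$, the set $V = B_{z_1} \cup \cdots \cup B_{z_n}$ is an open neighborhood of $\Gamma(y_0)$, and $A = A_{z_1} \cap \cdots \cap A_{z_n}$ is an open neighborhood of $y_0$ such that $\phi(y,z) < \lambda$ for every $y \in A$ and every $z \in V$.

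To close the argument I would invoke upper semicontinuity of the correspondence $\Gamma$ (Definition~\ref{def:uscmapping}): since $V$ is a neighborhood of $\Gamma(y_0)$, there is a neighborhood $U'$ of $y_0$ with $\Gamma(y) \subseteq V$ for all $y \in U'$. Setting $U = A \cap U'$, for every $y \in U$ the nonempty compact set $\Gamma(y)$ is contained in $V$, hence $\phi(y,z) < \lambda$ for all $z \in \Gamma(y)$, and therefore $M(y) = \max_{z \in \Gamma(y)} \phi(y,z) < \lambda$. As $y_0$ and $\lambda > M(y_0)$ were arbitrary, $M$ is upper semicontinuous.

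I do not anticipate a substantive obstacle; the only points requiring care are bookkeeping ones: using that $\phi$ is upper semicontinuous as a function on the product $Y \times Z$ (so that neighborhoods of the form $A_z \times B_z$ are available) rather than merely separately in each argument, and correctly chaining the two neighborhood extractions—one from compactness of $\Gamma(y_0)$ and one from upper semicontinuity of $\Gamma$.
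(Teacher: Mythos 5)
This statement is quoted from \citet{berge1963topological} and the paper gives no proof of it, so there is nothing internal to compare against; judged on its own, your argument is correct and is essentially the standard proof of (the upper-semicontinuity half of) Berge's maximum theorem. Both halves are sound: the attainment argument via sublevel covers and the finite intersection property is fine, and the main step correctly combines a finite subcover of $\Gamma(y_0)$ by product neighborhoods $A_z \times B_z$ on which $\phi < \lambda$ with the upper semicontinuity of the correspondence $\Gamma$ applied to the resulting neighborhood $V$ of $\Gamma(y_0)$; the two bookkeeping points you flag (joint versus separate upper semicontinuity of $\phi$, and chaining the two neighborhood extractions) are indeed the only delicate spots, and you handle both correctly.
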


We use the above theorem to establish our result. Manifolds (including tangent bundles) are equipped with the natural topology inherited from their smooth structure.

\begin{proof}[Proof of Theorem~\ref{thm:retractionsgeneralmainnew}]
	It is sufficient to show that the function
	\begin{align}
	t(r) & = \inf_{(x, s) \in \T\calM : x \in \calU, \|s\|_x \leq r} \sigmamin(\D\Retr_x(s))
	\end{align}
	is lower semicontinuous from $\reals^+ = \{r \in \reals : r \geq 0 \}$ to $\reals$, with respect to their usual topologies. Indeed, $t(0) = 1$ owing to the fact that $\D\Retr_x(0)$ is the identity map for all $x$, and $t$ being lower semicontinuous means that it cannot ``jump down''. Explicitly, lower semicontinuity at $r = 0$ implies that, for all $\delta > 0$, there exists $a > 0$ such that for all $r \leq a$ we have $t(r) \geq t(0) - \delta = 1 - \delta \triangleq b$. 
	
	To this end, consider the correspondence $\Gamma \colon \reals^+ \to \T\calM$ defined by
	\begin{align}
	\Gamma(r) & = \{ (x, s) \in \T\calM : x \in \calU \textrm{ and } \|s\|_x \leq r \}.
	\end{align}
	Further consider the function $\phi \colon \reals^+ \times \T\calM \to \reals$ defined by $\phi(r, (x, s)) = -\sigmamin(\D\Retr_x(s))$. Then, $t(r) = -M(r)$, where
	\begin{align}
	M(r) & = \sup_{(x, s) \in \Gamma(r)} \phi(r, (x, s)).
	\label{eq:Mrsup}
	\end{align}
	Thus, we must show $M$ is upper semicontinuous. By Theorem~\ref{thm:bergemain}, this is the case if
	\begin{enumerate}
		\item $\phi$ is upper semicontinuous,
		\item $\Gamma(r)$ is nonempty and compact for all $r \geq 0$, and
		\item For any $r \geq 0$ and any neighborhood $\calV$ of $\Gamma(r)$ in $\T\calM$, there exists a neighborhood $I$ of $r$ in $\reals^+$ such that, for all $r' \in I$, we have $\Gamma(r') \subseteq \calV$.
	\end{enumerate}
	The first condition holds a fortiori since $\phi$ is continuous, owing to smoothness of $\Retr \colon \T\calM \to \calM$. The second condition holds since $\calU$ is nonempty and compact.
	For the third condition, we show in Lemma~\ref{lem:neighborhoodballsTM} below that there exists a continuous function $\Delta \colon \calU \to \reals$ (continuous with respect to the subspace topology) such that $\{ (x, s) \in \T\calM : x \in \calU \textrm{ and } \|s\|_x \leq \Delta(x) \} \subseteq \calV$ and $\Delta(x) > r$ for all $x \in \calU$ (if $\calM$ is not connected, apply the lemma to each connected component which intersects with $\calU$). As a result, $\min_{x\in\calU} \Delta(x) = r + \varepsilon$ for some $\varepsilon > 0$ (using $\calU$ compact), and $\Gamma(r+\varepsilon)$ is included in $\calV$. We conclude that $I = [0, r+\varepsilon)$ is a suitable neighborhood of $r$ to verify the condition.
\end{proof}

We now state and prove the last piece of the puzzle, which applies above with $r(x)$ constant ($L = 0$). Although the context is quite different, the first part of the proof is inspired by that of the tubular neighborhood theorem in~\citep[Thm.~5.25]{lee2018riemannian}.

\begin{lemma} \label{lem:neighborhoodballsTM}
	Let $\calU$ be any subset of a connected Riemannian manifold $\calM$ and let $r \colon \calU \to \reals^+$ be $L$-Lipschitz continuous with respect to the Riemannian distance $\dist$ on $\calM$, that is, 
	\begin{align*}
	\forall x, x' \in \calU, \quad |r(x) - r(x')| \leq L \dist(x, x').
	\end{align*}
	Consider this subset of the tangent bundle:
	\begin{align*}
	\left\{ (x, s) \in \T\calM : x \in \calU \textrm{ and } \|s\|_x \leq r(x) \right\}.
	\end{align*}
	For any neighborhood $\calV$ of this set in $\T\calM$, there exists an $(L+1)$-Lipschitz continuous function $\Delta \colon \calU \to \reals^+$ such that $\Delta(x) > r(x)$ for all $x \in \calU$ and
	\begin{align*}
	\left\{ (x, s) \in \T\calM : x \in \calU \textrm{ and } \|s\|_x \leq \Delta(x) \right\} \subseteq \calV.
	\end{align*}
\end{lemma}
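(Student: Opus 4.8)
Write $B_x(\tau)=\{s\in\T_x\calM:\|s\|_x\le\tau\}$ for the closed fibre ball and $S_r$ for the set appearing in the statement. The plan is to first record, for each point $x$, the largest fibre radius that still fits inside $\calV$, and then to smooth that (a priori only lower semicontinuous) radius into a genuine Lipschitz function. So first I would introduce $\rho\colon\calM\to[0,\infty]$,
\[
	\rho(x)=\sup\bigl(\{0\}\cup\{\tau>0:\{x\}\times B_x(\tau)\subseteq\calV\}\bigr),
\]
and observe that for $x\in\calU$ the compact disk $\{x\}\times B_x(r(x))$ lies in $S_r\subseteq\calV$; since $\calV$ is open and $\T\calM$ is metrizable, a short sequential-compactness argument (a sequence $s_n$ with $\|s_n\|_x\le r(x)+1/n$ and $(x,s_n)\notin\calV$ would subconverge to a point of $\{x\}\times B_x(r(x))$ outside the open set $\calV$) yields $\rho(x)>r(x)$ for every $x\in\calU$.

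Second, I would prove that $\rho$ is lower semicontinuous on $\calM$. Given $x_0$ and $c<\rho(x_0)$, pick $\tau\in(c,\rho(x_0))$ so that $\{x_0\}\times B_{x_0}(\tau)\subseteq\calV$, choose a compact neighbourhood $\bar N$ of $x_0$, and consider the closed $\tau$-disk bundle $K=\{(x,s)\in\T\calM:x\in\bar N,\ \|s\|_x\le\tau\}$, which is compact. Then $K\setminus\calV$ is compact, hence so is its image under the bundle projection $\pi\colon\T\calM\to\calM$, and that image misses $x_0$; therefore $W=\mathrm{int}(\bar N)\setminus\pi(K\setminus\calV)$ is an open neighbourhood of $x_0$ on which $\{x\}\times B_x(\tau)\subseteq\calV$, i.e.\ $\rho\ge\tau>c$. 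So $\{\rho>c\}$ is open and $\rho|_\calU$ is lower semicontinuous. This is the step I expect to need the most care, precisely because $\T\calM$ itself is not compact, so one must localize to a compact disk bundle over a compact neighbourhood before a tube-lemma-style argument can be run.

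Third, I would turn $\rho$ into $\Delta$. Put $\rho'=\min(\rho,r+1)$, which is still lower semicontinuous with $r<\rho'\le r+1$, and let $h=\rho'-r$; then $h$ is lower semicontinuous with $0<h\le 1$, and, being lower semicontinuous and positive, it is locally bounded below: for each $x\in\calU$ there is $\eta(x)>0$ with $h(y)>h(x)/2$ whenever $y\in\calU$ and $\dist(x,y)<\eta(x)$. Defining
\[
	\delta(x)=\tfrac12\inf_{y\in\calU}\bigl(h(y)+\dist(x,y)\bigr),
\]
the triangle inequality makes $\delta$ a $\tfrac12$-Lipschitz function, the choice $y=x$ gives $\delta(x)\le\tfrac12 h(x)<h(x)$, and splitting the infimum into $\dist(x,y)<\eta(x)$ versus $\dist(x,y)\ge\eta(x)$ gives $\delta(x)\ge\tfrac12\min(h(x)/2,\eta(x))>0$. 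Then $\Delta:=r+\delta$ is $(L+\tfrac12)$-Lipschitz, hence $(L+1)$-Lipschitz, it is positive, and $r(x)<\Delta(x)<\rho'(x)\le\rho(x)$ for every $x\in\calU$. Finally, since $\Delta(x)<\rho(x)$ there is $\tau^*>\Delta(x)$ with $\{x\}\times B_x(\tau^*)\subseteq\calV$, so $\{x\}\times B_x(\Delta(x))\subseteq\calV$; taking the union over $x\in\calU$ gives the desired inclusion. (I use infimal convolution with $\dist$ here rather than a partition of unity, which would fail to be Lipschitz; positivity of the convolution is exactly what ``lower semicontinuous and positive implies locally bounded below by a positive constant'' buys us.)
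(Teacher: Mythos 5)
Your proof is correct, and it takes a genuinely different route from the paper's. The paper defines the candidate radius directly as $\Delta(x) = \sup\{\delta : V_\delta(x) \subseteq \calV\}$ where $V_\delta(x)$ is a ``spread-out'' set containing not just the fibre over $x$ but all fibres over the ball $\dist(x,x') < \delta - r(x)$; the Lipschitz bound then falls out of a nesting argument $V_\delta(x') \subseteq V_{\Delta(x)}(x)$, but the price is that proving $\Delta(x) > r(x)$ requires an explicit construction of adapted coordinates on $\T\calM$ (normal coordinates plus an orthonormalized frame) followed by a finite-subcover argument. You instead decouple the two difficulties: you first record the purely fibrewise maximal radius $\rho(x)$, which is only lower semicontinuous, establish $\rho > r$ on $\calU$ by a short sequential-compactness argument in a single fibre, prove lower semicontinuity by a tube-lemma argument over a compact disk bundle (correctly localizing to a compact neighbourhood of the base point first, which is the step that replaces the paper's coordinate construction), and then regularize via the Pasch--Hausdorff infimal convolution $\delta(x) = \tfrac12\inf_y(h(y)+\dist(x,y))$ to manufacture the Lipschitz gap. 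All the individual steps check out: the positivity of $\delta$ via the local lower bound on the l.s.c.\ function $h$, the $\tfrac12$-Lipschitz estimate, the strict chain $r < \Delta < \rho' \le \rho$, and the final fibrewise inclusion. Your modular version is arguably cleaner---in particular, keeping $\Delta(x)$ strictly below $\rho(x)$ makes the inclusion of the \emph{closed} disk of radius $\Delta(x)$ in $\calV$ immediate, a point the paper's supremum-based definition handles less explicitly---while the paper's approach has the merit of producing the Lipschitz constant without any separate regularization step.
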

\begin{proof}
	Consider the following open subsets of the tangent bundle, defined for each $x \in \calM$ and $\delta \in \reals$:
	\begin{align*}
	V_\delta(x) & = \left\{ (x', s') \in \T\calM : \dist(x, x') < \delta - r(x) \textrm{ and } \|s'\|_{x'} < \delta \right\}.
	\end{align*}
	Referring to these sets, define the function $\Delta \colon \calU \to \reals$ as:
	\begin{align*}
	\Delta(x) & = \sup\left\{ \delta \in \reals : V_\delta(x) \subseteq \calV \right\}.
	\end{align*}
	This is well defined since $V_{r(x)}(x) = \emptyset$, so that $\Delta(x) \geq r(x)$ for all $x$.
	If $\Delta(x) = \infty$ for some $x$, then $\calV = \T\calM$ and the claim is clear (for example, redefine $\Delta(x) = r(x) + 1$ for all $x$). Thus, we assume $\Delta(x)$ finite for all $x$. The rest of the 	proof is in two parts.
	
	\textbf{Step 1: $\Delta$ is Lipschitz continuous.} Pick $x, x' \in \calU$, arbitrary. We must show
	\begin{align*}
	\Delta(x) - \Delta(x') \leq (L+1) \dist(x, x').
	\end{align*}
	Then, by reversing the roles of $x$ and $x'$, we get $|\Delta(x) - \Delta(x')| \leq (L+1) \dist(x, x')$, as desired. If $\Delta(x) \leq (L+1) \dist(x, x')$, the claim is clear since $\Delta(x') \geq 0$. Thus, we now assume $\Delta(x) > (L+1) \dist(x, x')$. Define $\delta = \Delta(x) - (L+1) \dist(x, x') > 0$. It is sufficient to show that $V_\delta(x') \subseteq \calV$, as this implies $\Delta(x') \geq \delta = \Delta(x) - (L+1) \dist(x, x')$, allowing us to conclude. To this end, we show the first inclusion in:
	\begin{align*}
	V_\delta(x') \subseteq V_{\Delta(x)}(x) \subseteq \calV.
	\end{align*}
	Consider an arbitrary $(x'', s'') \in V_\delta(x')$. This implies two things: first, $\|s''\|_{x''} < \delta \leq \Delta(x)$, and second:
	\begin{align*}
	\dist(x'', x) & \leq \dist(x'', x') + \dist(x', x) \\ & < \delta - r(x') + \dist(x', x) \\ & = \Delta(x) - r(x) + r(x) - r(x') - L \dist(x, x') \\ & \leq \Delta(x) - r(x),
	\end{align*}
	where in the last step we used $r(x) - r(x') \leq L\dist(x, x')$ since $r$ is $L$-Lipschitz continuous on $\calU$.
	As a result, $(x'', s'')$ is in $V_{\Delta(x)}(x)$, which concludes this part of the proof.

	\textbf{Step 2: $\Delta(x) > r(x)$ for all $x \in \calU$.}
	%
	Pick $x \in \calU$, arbitrary: $\calV$ is a neighborhood of
	\begin{align}
	\left\{ (x, s) \in \T\calM : \|s\|_x \leq r(x) \right\}.
	\label{eq:DeltalargerthanrmanifoldsetA}
	\end{align}
	The claim is that there exists $\varepsilon > 0$ such that
	\begin{align}
	\left\{ (x', s') \in \T\calM : \dist(x, x') \leq \varepsilon \textrm{ and } \|s'\|_{x'} \leq r(x) + \varepsilon \right\}
	\label{eq:DeltalargerthanrmanifoldsetB}
	\end{align}
	is included in $\calV$. Indeed, that would show that $\Delta(x) \geq r(x) + \varepsilon > r(x)$. To show this, we construct special coordinates on $\T\calM$ around $x$.
	
	The (inverse of the) exponential map at $x$ restricted to tangent vectors of norm strictly less than $\inj(x)$ (the injectivity radius at $x$) provides a diffeomorphism $\varphi$ from $\calW \subseteq \calM$ (the open geodesic ball of radius $\inj(x)$ around $x$) to $B(0, \inj(x))$: the open ball centered around the origin in the Euclidean space $\Rd$, where $d = \dim\calM$. Additionally, from the chart $(\calW, \varphi)$, we extract coordinate vector fields on $\calW$: a set of smooth vector fields $W_1, \ldots, W_d$ on $\calW$ such that, at each point in $\calW$, the corresponding tangent vectors form a basis for the tangent space. We further orthonormalize this local frame (see~\citep[Prop.~2.8]{lee2018riemannian}) into a new local frame, $E_1, \ldots, E_d$, so that for each $x' \in \calW$ we have that $E_1(x'), \ldots, E_d(x')$ form an orthonormal basis for $\T_{x'}\calM$ (with respect to the Riemannian metric at $x'$). Then, the map
	\begin{align*}
	\psi(x', s') & = \left( \varphi(x'), \zeta(x', s') \right) & & \textrm{ with } & \zeta(x', s') & = \left( \inner{E_1(x')}{s'}_{x'}, \ldots, \inner{E_d(x')}{s'}_{x'} \right)
	\end{align*}
	establishes a diffeomorphism between $\T\calW$ and $B(0, \inj(x)) \times \Rd$, with the following properties:
	\begin{enumerate}
		\item $\dist(x, x') = \|\varphi(x')\|$ (in particular, $\varphi(x) = 0$), and
		\item For any $s', v' \in \T_{x'}\calM$, it holds $\inner{s'}{v'}_{x'} = \inner{\zeta(x', s')}{\zeta(x', v')}$.
	\end{enumerate}
	(Here, $\inner{\cdot}{\cdot}$ and $\|\cdot\|$ denote the Euclidean inner product and norm in $\Rd$.)
	
	Expressed in these coordinates (that is, mapped through $\psi$), the set in~\eqref{eq:DeltalargerthanrmanifoldsetA} becomes:
	\begin{align*}
	D_0 & = \{ 0 \} \times \bar B(0, r(x)),
	\end{align*}
	where $\bar B(0, r(x))$ denotes the closed Euclidean ball of radius $r(x)$ around the origin in $\Rd$. Of course, $\calV \cap \T\calW$ maps to a neighborhood of $D_0$ in $\Rd \times \Rd$: call it $O$. Similarly, the set in~\eqref{eq:DeltalargerthanrmanifoldsetB} maps to:
	\begin{align*}
	D_\varepsilon & = \bar B(0, \varepsilon) \times \bar B(0, r(x) + \varepsilon).
	\end{align*}
	It remains to show that there exists $\varepsilon > 0$ such that $D_\varepsilon$ is included in $O$. 
	
	Use this distance on $\Rd \times \Rd$: $\dist((y, z), (y', z')) = \max(\|y - y'\|, \|z - z'\|)$. This distance is compatible with the usual topology. For each $(0, z)$ in $D_0$, there exists $\varepsilon_z > 0$ such that
	\begin{align*}
	C(z, \varepsilon_z) = \left\{ (y', z') \in \Rd \times \Rd : \|y'\| < \varepsilon_z \textrm{ and } \|z - z'\| < \varepsilon_z \right\}
	\end{align*}
	is included in $O$ (this is where we use the fact that $\calV$---hence $O$---is open). The collection of open sets $C(z, \varepsilon_z/2)$ forms an open cover of $D_0$. Since $D_0$ is compact, we may extract a finite subcover, that is, we select $z_1, \ldots, z_n$ such that the sets $C(z_i, \varepsilon_{z_i}/2)$ cover $D_0$. Now, define $\varepsilon = \min_{i = 1, \ldots, n} \varepsilon_{z_i}/2$ (necessarily positive), and consider any point $(y, z) \in D_\varepsilon$. We must show that $(y, z)$ is in $O$. To this end, let $\bar z$ denote the point in $\bar B(0, r(x))$ which is closest to $z$. Since $(0, \bar z)$ is in $D_0$, there exists $i$ such that $(0, \bar z)$ is in $C(z_i, \varepsilon_{z_i}/2)$. As a result,
	\begin{align*}
	\|z - z_i\| & \leq \|z - \bar z\| + \|\bar z - z_i\| < \varepsilon + \varepsilon_{z_i}/2 \leq \varepsilon_{z_i}.
	\end{align*}
	Likewise, $\|y\| \leq \varepsilon \leq \varepsilon_{z_i}/2 < \varepsilon_{z_i}$. Thus, we conclude that $(y, z)$ is in $C(z_i, \varepsilon_{z_i})$, which is included in $O$. This confirms $D_\varepsilon$ is in $O$, so that the set in~\eqref{eq:DeltalargerthanrmanifoldsetB} is in $\calV$ for some $\varepsilon > 0$.
\end{proof}

\end{document}